\documentclass[10pt,reqno]{amsart}
\usepackage{mathtools}
\mathtoolsset{showonlyrefs}
 \allowdisplaybreaks 
\usepackage[english]{babel}
\usepackage{ragged2e}
\usepackage[utf8]{inputenc}
\usepackage[T1]{fontenc}
\usepackage{amsmath}
\usepackage{amsfonts}
\usepackage{nccmath}

\usepackage{etoolbox}
\makeatletter
\pretocmd{\tableofcontents}{%
  \begingroup
  \setlength{\parskip}{0pt}%
  \setlength{\baselineskip}{15pt}%
}{}{}

\apptocmd{\tableofcontents}{%
  \endgroup
}{}{}
\makeatother

\makeatletter
\renewcommand{\tocsection}[3]{%
  \indentlabel{\@ifnotempty{#2}{\bfseries\ignorespaces#1 #2\quad}}\bfseries#3}
\renewcommand{\tocsubsection}[3]{%
  \indentlabel{\@ifnotempty{#2}{\ignorespaces#1 #2\quad}}#3}

\newcommand\@dotsep{4.5}
\def\@tocline#1#2#3#4#5#6#7{\relax
  \ifnum #1>\c@tocdepth 
  \else
    \par \addpenalty\@secpenalty\addvspace{#2}%
    \begingroup \hyphenpenalty\@M
    \@ifempty{#4}{%
      \@tempdima\csname r@tocindent\number#1\endcsname\relax
    }{%
      \@tempdima#4\relax
    }%
    \parindent\z@ \leftskip#3\relax \advance\leftskip\@tempdima\relax
    \rightskip\@pnumwidth plus1em \parfillskip-\@pnumwidth
    #5\leavevmode\hskip-\@tempdima{#6}\nobreak
    \leaders\hbox{$\m@th\mkern \@dotsep mu\hbox{.}\mkern \@dotsep mu$}\hfill
    \nobreak
    \hbox to\@pnumwidth{\@tocpagenum{\ifnum#1=1\bfseries\fi#7}}\par
    \nobreak
    \endgroup
  \fi}
\AtBeginDocument{%
\expandafter\renewcommand\csname r@tocindent0\endcsname{0pt}
}
\def\l@subsection{\@tocline{2}{0pt}{2.5pc}{5pc}{}}
\makeatother

\usepackage{amssymb}
\usepackage{graphicx}
\usepackage{bbm}
\usepackage{amsmath,amsfonts,amsthm,amssymb,amscd, appendix, verbatim}
\usepackage{xpatch}
\usepackage{parskip}

\usepackage{setspace, color}
\usepackage{array}
\usepackage{cite}
\usepackage{hyperref}

\numberwithin{equation}{section}

\newcommand{\R}{\mathbb{R}}

\newcommand{\Z}{\mathbb{Z}}

\newcommand{\Sc}{\mathcal{S}}

\renewcommand{\k}{\kappa}

\newcommand{\M}{\mathcal{M}}

\newcommand{\weakto}{\rightharpoonup}

\newcommand{\inn}[2]{\langle #1, #2 \rangle}

\usepackage{anysize}
\usepackage{lineno}
\DeclareMathSymbol{\shortminus}{\mathbin}{AMSa}{"39}

\marginsize{2cm}{2cm}{1.5cm}{1.5cm}

\renewcommand{\Re}{\operatorname{Re}}

\newtheorem{teo}{Theorem}[section]
\newtheorem{prop}[teo]{Proposition}
\newtheorem{lem}[teo]{Lemma}
\newtheorem{re}[teo]{Remark}
\newtheorem{de}[teo]{Definition}
\newtheorem{cor}[teo]{Corollary}

\numberwithin{equation}{section}
\title[Large data scattering for the $k$-dgBO in the energy space]{Large data scattering for the defocusing $k$-dispersion generalized Benjamin-Ono equation in the energy space}

\date{}
\author{
Luccas Campos, 
Felipe Linares, and
Thyago S. R. Santos 
}
\begin{document}

\begin{abstract}
\noindent We study the defocusing $k$-dispersion generalized Benjamin-Ono equation. 
For every even integer $k\geq 4$, we prove that solutions with initial data in the energy space $H^{\frac{\alpha}{2}}$ are global in time and scatter. 
The proof combines the concentration-compactness-rigidity method of Kenig and Merle with techniques based on the Caffarelli-Silvestre extension and Tao's monotonicity formula adapted to the fractional dispersion setting.
\end{abstract}
\maketitle

\tableofcontents

\section{Introduction}
We consider the Cauchy problem for the defocusing $k$-dispersion generalized Benjamin-Ono equation
\begin{equation}\label{DGBO}
    \partial_t u + D^\alpha \partial_x u + \partial_x(u^{k+1}) = 0,
\qquad (t,x)\in \R\times\R, \tag{k-dgBO}
\end{equation}
where $u=u(t,x)$ is real-valued, $\alpha\in(1,2]$, and $k$ is an even positive integer. The operator $D^\alpha$ is the fractional derivative defined by the Fourier multiplier
\begin{equation}\label{eq:fractional_laplacian}
D^\alpha f(x):=\left(|\xi|^\alpha \widehat f(\xi)\right)^\vee(x).
\end{equation}
The \eqref{DGBO} family includes two fundamental one-dimensional dispersive models. When $\alpha=2$, it becomes the defocusing generalized Korteweg-de Vries equation, which, in the particular case $k=1$, corresponds to the classical KdV equation. At the formal endpoint $\alpha=1$, it becomes the generalized Benjamin-Ono equation, where $D\partial_x=\mathcal H\partial_x^2$ and
\begin{equation}\label{hilberttranform}
(\mathcal H f)(x):=(-i\operatorname{sgn}(\xi)\widehat f(\xi))^\vee(x).
\end{equation}
This class of equations originates in the theory 
shallow-water waves. We refer the reader, for instance, to the classical works of Korteweg and de Vries, Benjamin, and Ono \cite{KdV1895,Benjamin1967,Ono1975}, as well as to the general derivation by Shrira and Voronovich of the dgBO dispersion \cite{Shrira_Voronovich_1996} in the context of weakly nonlinear long vorticity waves in the coastal zone where the large-scale cross-shore depth depends algebraically on the offshore distance. 

For genuinely fractional dispersion, however, the equation is no longer governed by the local derivatives available in the KdV case, nor by the explicit Hilbert transform identities that play a central role in the Benjamin-Ono theory. For instance, in contrast to the classical KdV and Benjamin–Ono equations, which are completely integrable, no integrable structure is known for the genuinely fractional regime  $\alpha \in (1,2)$.
Nevertheless, for sufficiently regular real-valued solutions, the \eqref{DGBO} equation conserves the mass
\begin{equation}\label{eq:mass}
M[u](t):=\int_\R u^2(t,x)\,dx=M[u_0]
\end{equation}
and the energy
\begin{equation}\label{eq:energy}
E[u](t):=\frac12\int_\R |D^{\frac{\alpha}{2}}u(t,x)|^2\,dx
+\frac{1}{k+2}\int_\R u^{k+2}(t,x)\,dx=E[u_0].
\end{equation}
The equation also has the scaling symmetry
\begin{equation}\label{scale}
u_\lambda(t,x):=\lambda^{-\frac{\alpha}{k}}
u\left(\lambda^{-(\alpha+1)}t,\lambda^{-1}x\right),\qquad \lambda>0,
\end{equation}
which leaves invariant the homogeneous Sobolev norm $\dot H^{s_c}(\R)$ with
\begin{equation}\label{critical-index}
s_c:=\frac12-\frac{\alpha}{k}.
\end{equation}
Hence, the energy space $H^{\frac{\alpha}{2}}(\R)$ lies above the scaling-critical regularity for the range $\alpha>1$ and $k\geq4$ considered in this paper.

The local and global well-posedness theory for generalized Benjamin-Ono and fractional KdV-type equations has been developed in a number of works (see, \textit{e.g.},  \cite{MR1033618,MR1122309,MolinetRibaud2004,MR2859931,MR3200754,LinaresPilodSaut2014,Molinet-Tanaka-2510.27461}). In particular, in \cite{CamposLinaresSantos2024} the authors established sharp well-posedness and small-data scattering for \eqref{DGBO} at the scaling-critical regularity. More precisely, if $\|u_0\|_{\dot H^{s_c}}$ is sufficiently small, then the corresponding solution is global and there exist $u_0^+,u_0^-\in \dot H^{s_c}(\R)$ such that
\begin{equation*}
\lim_{t\to\pm\infty}\|u(t)-V_\alpha(t)u_0^\pm\|_{\dot H^{s_c}}=0,
\end{equation*}
where $V_\alpha(t)$ denotes the linear group generated by $-D^\alpha\partial_x$. Equivalently, scattering is implied by the finiteness of the critical spacetime norm
\begin{equation*}
\|u\|_{L_x^{p_{s_c}}L_t^{q_{s_c}}(\R\times\R)}
\end{equation*}
for the critical admissible exponents $(p_{s_c},q_{s_c})$ dictated by the scaling (cf. Definition \ref{de:functional_spaces}).

The purpose of this work is to waive the smallness requirement in order to prove scattering for arbitrary data in the energy space. This is a substantially stronger statement than merely global well-posedness. Indeed, conservation of energy prevents the growth of the energy norm, but scattering requires additional spacetime integrability: the time decay of the solution must be strong enough so that the nonlinear term is but perturbative in the limit $t\to\pm\infty$. Thus, the problem asks whether the \textit{a priori} bounds supplied by the conservation laws can be bootstrapped into a global-in-time dispersive estimate.


There are strong reasons to expect an affirmative answer in the defocusing case. For instance, the potential part of the energy 
does not exhibit the variational obstruction associated with focusing ground states. The evenness of $k$ is essential here: it guarantees the definite favorable sign in the potential energy and also underlies the positivity mechanisms used later in the rigidity argument. The linear group $V_\alpha(t)$ for $\alpha \in (1,2]$ provides enough dispersion to produce useful Strichartz and maximal-function estimates, while the monotonicity already observed in the defocusing gKdV and gBO settings is unable to harbor a compact coherent object indefinitely. Heuristically, if scattering were false, there would have to exist a minimal solution for which dispersion and nonlinearity remained exactly balanced for all times. Such a solution would behave like a localized, almost periodic object. The role of the rigidity argument is to prove that the monotonicity induced by the combined effects of the dispersion and the defocusing nonlinearity renders this minimal counterexample impossible.

Large-data scattering is one of the central problems in the modern theory of nonlinear dispersive equations precisely because this heuristic cannot be achieved by conservation laws alone. Early scattering results relied on Lin-Strauss Morawetz estimates \cite{LS1978}. Later on, Bourgain's induction-on-energy ideas  \cite{B98} and the concentration-compactness-rigidity method of Kenig and Merle \cite{KENIG,KENIG2,MR3059764}, rooted in earlier profile decompositions such as the one by Bahouri and Gérard \cite{BahouriGerard1999}, provided a robust way to turn the above contradiction scenario into a proof. In nonlinear Schr\"odinger and wave equations, this collection of ideas led to several landmark results, including works of Bourgain \cite{MR1691575}, Duyckaerts, Holmer and Roudenko \cite{MR2470397}, Killip, Tao and Visan \cite{KTV}, Killip and Visan \cite{KiVi}, and Dodson \cite{dodson2,dodson3}. For one-dimensional dispersive models with derivative nonlinearities, Dodson proved scattering for the mass-critical defocusing gKdV equation \cite{MR3625190}, Farah, Linares, Pastor and Visciglia obtained large-data scattering for the defocusing supercritical gKdV equation \cite{MR3772197}, and Kim and Kwon in \cite{MR4009456} treated the defocusing generalized Benjamin-Ono equation in the energy space $H^{\frac12}(\R)$.

Our main result extends this large-data scattering picture to the genuinely fractional dispersive family \eqref{DGBO}.

\begin{teo}\label{teo:scat}
Let $\alpha\in(1,2]$ and let $k\geq4$ be even. For every $u_0\in H^{\frac{\alpha}{2}}(\R)$, the solution to \eqref{DGBO} with initial data $u_0$ is global in time and scatters in $H^{\frac{\alpha}{2}}(\R)$ both forward and backward in time. That is, there exist $u_0^+,u_0^-\in H^{\frac{\alpha}{2}}(\R)$ such that
\begin{equation*}
\lim_{t\to\pm\infty}\|u(t)-V_\alpha(t)u_0^\pm\|_{H^{\frac{\alpha}{2}}}=0.
\end{equation*}
\end{teo}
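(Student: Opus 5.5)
We follow the concentration-compactness-rigidity scheme of Kenig--Merle, as adapted by Kim--Kwon for gBO and by Farah--Linares--Pastor--Visciglia for gKdV. The starting point is the local theory of \cite{CamposLinaresSantos2024}. It gives local well-posedness in $H^{\frac{\alpha}{2}}$ with a blow-up/scattering criterion in terms of the critical norm $\|u\|_{L_x^{p_{s_c}}L_t^{q_{s_c}}}$, small-data scattering, and a long-time perturbation (stability) lemma. Conservation of $M$ and $E$ in the defocusing case ($k$ even, so $\int u^{k+2}\ge 0$) gives a uniform bound $\|u(t)\|_{H^{\frac{\alpha}{2}}}\lesssim_{M,E} 1$, hence global existence. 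It remains to show that this critical spacetime norm is finite. For $A>0$ we define
\[
L(A)=\sup\{\|u\|_{L_x^{p_{s_c}}L_t^{q_{s_c}}(\R\times\R)} : M[u_0]+E[u_0]\le A\},
\]
and set $A_c=\sup\{A: L(A)<\infty\}$. Small data theory gives $A_c>0$. We argue by contradiction, assuming $A_c<\infty$.

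\textbf{Step 1: profile decomposition.} We prove a linear profile decomposition for bounded sequences in $H^{\frac{\alpha}{2}}$ adapted to $V_\alpha(t)$. Since we work above the critical regularity with the inhomogeneous norm, and mass is a conserved quantity, the only noncompact symmetries are space and time translations; no scaling parameter is needed. The decomposition reads
\[
u_{0,n}=\sum_{j\le J}V_\alpha(-t_n^j)\phi^j(\cdot-x_n^j)+w_n^J, \qquad \limsup_n\|V_\alpha(t)w_n^J\|_{L_x^{p_{s_c}}L_t^{q_{s_c}}}\to0 .
\]
It comes with Pythagorean expansions of $M$ and $E$. The potential-energy part of the expansion uses a local-smoothing/refined Strichartz inequality and the vanishing of $V_\alpha(t)w_n^J$ in $L^\infty$-type norms.

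We also need nonlinear profiles when $t_n^j\to\pm\infty$; these come from the wave operator construction in the local theory. From the decomposition, minimality and the stability lemma, we extract a critical element $u_c$ with $M+E=A_c$ and infinite critical norm. Its orbit $\{u_c(t,\cdot+x(t))\}_{t\ge0}$ is precompact in $H^{\frac{\alpha}{2}}$. One subtle point is the profiles with $t_n^j\to\pm\infty$: for them the potential energy vanishes in the limit. This is harmless because $E\ge$ kinetic part, and the argument only uses that $M+E$ decreases strictly for each profile.

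\textbf{Step 2: rigidity.} This is the main obstacle. The nonlocality of $D^\alpha$ prevents the direct KdV-type virial computation, so we use a Tao-type monotonicity formula. We first show that $x(t)$ can be chosen with $x(t)/t\to$ a limit and with controlled speed. We then consider the mass-to-the-right functional
\[
\mathcal I(t)=\int u(t,x)^2\,\psi\Big(\frac{x-x(t)-R}{\cdot}\Big)dx,
\]
or rather the Tao-type interaction $\iint_{x<y}u^2(x)u^2(y)$. The interaction is invariant under translation and its time derivative has a sign. Differentiating and using the equation, the nonlinear contribution is
\[
\tfrac{k+1}{k+2}\iint\delta(x-y)\,u^{k+2}(x)\,u^2(y),
\]
which is positive by the evenness of $k$.

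The dispersive contribution is expressed through the Caffarelli--Silvestre extension $U$ of $u$ to the upper half-plane, with weight $y^{1-\alpha}$ adapted to $D^{\alpha}\partial_x$ (writing $D^\alpha\partial_x$ in terms of $D^{\alpha/2}$ squared and commutators with the cutoff). This turns $\int D^\alpha\partial_x u\,u\,\chi'$ into a positive quadratic form, a weighted Dirichlet energy of $U$, plus commutator errors. The error terms decay as $R\to\infty$ thanks to precompactness, which gives tightness uniformly in $t$.

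Integrating over $[0,T]$ then yields
\[
\int_0^T\!\!\int u_c^{k+2}\,dx\,dt\lesssim \sup_t\|u_c\|_{L^2}^4<\infty .
\]
On the other hand, compactness and nontriviality give $\int u_c^{k+2}(t)\ge c>0$ uniformly in $t$. Letting $T\to\infty$ gives a contradiction, so $u_c\equiv0$.

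\textbf{Step 3: scattering.} Once $L(A)<\infty$ for all $A$, finiteness of the critical norm, together with Strichartz and Kato smoothing estimates at the $H^{\frac{\alpha}{2}}$ level (persistence of regularity), shows that $V_\alpha(-t)u(t)$ is Cauchy in $H^{\frac{\alpha}{2}}$ as $t\to\pm\infty$. This produces $u_0^\pm$.

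The hardest part will be Step 2: making the extension-based positivity of the dispersive term rigorous, and controlling the commutator errors between $D^\alpha\partial_x$ and the cutoff. Here we will use the extension to write these as boundary traces and bound them by the precompactness tails.
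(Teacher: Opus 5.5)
Your Steps 1 and 3 follow the paper's scheme: a translation-only profile decomposition, a critical element at level $A_c$ with nontrivially localized $\int|u_c|^{k+2}$, and the scattering criterion. Step 2, however, has a genuine gap, because the functional you commit to is trivially conserved. The integrand is symmetric in $(x,y)$, so
\[
\iint_{x<y}u^2(t,x)\,u^2(t,y)\,dx\,dy=\tfrac12\,M[u]^2 .
\]
Equivalently, writing $\partial_t\rho=-\partial_x j$, one gets $\frac{d}{dt}\iint_{x<y}\rho(x)\rho(y)\,dx\,dy=-\int j\rho\,dx+\int \rho j\,dx=0$. The positive nonlinear flux you isolate from one factor is cancelled exactly by the same term from the other factor, and likewise for the dispersive part, so no spacetime bound can come out of it.

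The missing idea is that Tao's monotonicity pairs two \emph{different} densities: the mass $\rho=u^2$ and the energy $e=\frac12uD^\alpha u+\frac1{k+2}u^{k+2}$, with currents $j,\kappa$. Then $\partial_t\iint(y-x)\rho(x)e(y)\,dx\,dy=\int\rho\int\kappa-\int j\int e$. The heart of the matter is the algebraic inequality
\[
\int\rho\int\kappa-\int e\int j\ge\frac{k^2}{2(k+2)^2}\bigl(\int u^{k+2}\bigr)^2,
\]
which says the center of energy outruns the center of mass. The paper proves it by a Gram-determinant argument that requires the sign $\int v^{k+1}D^\alpha v\ge 0$. That positivity, obtained from the Caffarelli--Silvestre extension, is where the extension is essential. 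It is not used to turn $\int\chi'\,u\,D^\alpha\partial_x u$ into a positive Dirichlet form; the quadratic dispersive terms are handled by the Ginibre--Velo commutator expansion instead.

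The localization also has to be done differently from what you sketch. A sharp weight $\mathbf 1_{x<y}$ destroys the product structure $\int\rho\int\kappa-\int j\int e$ on which the Gram argument rests. Moreover, $\iint_{x<y}\rho e$ is not controlled by $M+E$, since $e$ is not pointwise nonnegative.

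The paper instead uses $Z_R(t)=\iint\psi_R(y-x)\rho(x)e(y)$ with $\psi_R'=R^{-1}\varphi_R^2*\varphi_R^2$. After averaging over $\ell$, $\psi_R'(y-x)$ factorizes, so the monotonicity formula can be applied to each $\varphi_R(\cdot-\ell)u$. Commutator estimates and the almost non-negativity bound then give $|Z_R|\lesssim_A R$ and $Z_R'\ge\varepsilon_0-O_A(\varepsilon+C_\varepsilon R^{-1/4})$. The contradiction comes from linear growth, not from a time-integrated bound.

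Since $Z_R$ is translation invariant, no information on $x(t)$ is needed. Your claimed control of $x(t)/t$, which your alternative functional $\mathcal I(t)$ would require, is left unproved. There is also no obvious tool for it here: the equation has no Galilean invariance or momentum constraint to exploit.
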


While the concentration-compactness machinery already requires a careful adaptation to the fractional dispersive setting, the rigidity argument presents a further challenge.
In the KdV setting, Tao's monotonicity method \cite{MR2276483} exploits the local structure of the operator $\partial_x^3$. In the Benjamin-Ono setting, Kim and Kwon \cite{MR4009456} use identities and commutator formulas tied to the Hilbert transform $\mathcal H$. For $D^\alpha\partial_x$, with $1<\alpha<2$, neither type of structure is available in the same form. Localizing mass or energy produces commutators with the non-polynomial multiplier $|\xi|^\alpha$. These commutators are substantially more delicate than their Hilbert transform analogues, providing much less smoothness \cite{KatoPonce1988,Li2019}. Moreover, the absence of a fractional version of the Leibniz rule (as an identity) prevents one from freely integrating by parts inside localized virial or Morawetz functionals.

A key point of our argument is that the Caffarelli-Silvestre extension \cite{CS07} restores enough locality to the problem to recover the sign information needed for rigidity. By representing $D^\alpha$ as a Dirichlet-to-Neumann operator for a degenerate elliptic extension in one more variable, we obtain localized identities and integration-by-parts formulas that are not accessible from the Fourier multiplier alone. In particular, this viewpoint is used to establish the nontrivial semidefiniteness
\begin{equation*}
\int_\R u^{k+1}D^\alpha u\,dx\geq 0, 
\end{equation*}
which is consistent with the convexity principle for fractional Laplacians \cite{CordobaCordoba2004} and is paramount to the monotonicity formula. Moreover, the fractional extension framework enables us to pass to a local almost-non-negativity estimate, which is then used to preclude the nonscattering counterexample. This is one of the main new features of the argument: the extension method allows the fractional operator to be handled as a local object at the places where locality facilitates the rigidity proof.

The proof is organized around the concentration-compactness-rigidity strategy. First, one needs a global theory that can ascertain scattering through a spacetime norm, which must guarantee small-data scattering and remain stable under small perturbations. This theory started its development in \cite{CamposLinaresSantos2024} and is adapted to our context in Sections \ref{sec-2} and \ref{sec-3}, together with the notation and preliminary estimates used throughout the paper. After that, one proves a linear profile decomposition adapted to the group $\{V_\alpha(t)\}_{t\in\R}$. We show that any bounded sequence of data in the energy space can be split, modulo subsequence, into elementary linear profiles that are asymptotically orthogonal, plus a remainder that is negligible in the critical spacetime norms. 

Combining the first two steps hints at a contradiction argument. If Theorem \ref{teo:scat} were false, then among all nonscattering solutions, one wishes to produce a minimal obstruction, usually called a ``critical solution''. This solution is necessarily global, lies at the smallest possible nonscattering size, and by the profile decomposition, wave operators and perturbation theory, can be shown to have an orbit that is compact modulo the symmetries of the equation. Intuitively, it is the only kind of object that could prevent scattering: it cannot radiate mass or energy away, nor split into more than a single nonlinear profile, because then every part would scatter and could be removed by the perturbation theory. Section \ref{sec-6} is devoted to making this compactness mechanism precise.

The remaining sections of the paper prove that this compact object behaves too nicely to be allowed to exist. It is imperative that the fractional nature of the operator be addressed first: Section \ref{sec-4} develops the identities for $D^\alpha$ and the Caffarelli-Silvestre extension needed to recover local information from a nonlocal operator. With these tools, Section \ref{sec-5} establishes the monotonicity formula adapted to \eqref{DGBO}. Finally, Section \ref{sec-7} combines the compactness of the critical element and this monotonicity formula. The compactness guarantees that a fixed amount of the solution remains trapped together in space, while monotonicity forces a one-way separation of the mass and energy centers. Alas, these two conclusions are incompatible. This contradiction rules out the critical element and completes the proof of scattering.

\begin{re}
The use of the Caffarelli-Silvestre extension in this setting provides a flexible way to extract local information from fractional dispersive operators. We expect this viewpoint to be useful in other rigidity arguments where the main obstruction is the lack of local virial identities.
\end{re}

\begin{re}
The argument here is different from the one in \cite{MR4009456}. Kim and Kwon work with the Benjamin-Ono dispersion $\mathcal H\partial_x^2$, whose commutator structure is more explicit. In contrast, the estimates for $D^\alpha\partial_x$ require a genuinely fractional analysis: commutators with $D^\alpha$ are less algebraic, localized integrations by parts are more restricted, and the positivity needed for monotonicity must be recovered through the extension method.
\end{re}

\begin{re}
The corresponding large-data scattering problem for odd values of $k$ remains open for gKdV, gBO, and for the fractional family considered here.
\end{re}



\noindent\textbf{Acknowledgments.}
L. Campos was partially supported by CNPq grants 07733/2023-8 and 404800/2024-6, the CAPES/Cofecub grant 88887.879175/2023-00 and the FAPEMIG grants APQ-03186-24 and APQ-03752-25. F. Linares was partially supported by CNPq grant 310329\!/\!2023-0 and FAPERJ grant E\--26\!/\!200.236\!/\!2026. T.S.R. Santos was partially supported by FAPESP Grant No.2024/15587-1. The author T.S.R. Santos also gratefully acknowledges the Universidade Federal de Minas Gerais for its hospitality during the period in which part of this work was carried out.

\section{Notations and preliminaries}\label{sec-2}
For any two nonnegative quantities $X$ and $Y$, the notation $X\lesssim Y$ implies the existence of an absolute constant $C>0$, independent of $X$ and $Y$, such that $X \leq CY$. We will denote $X\sim Y$ when both $X \lesssim Y$ and $Y \lesssim X$. The notation $X \ll Y$ is used when $X \gtrsim Y$ does not hold.

\medskip
We begin by recalling the definition of Sobolev spaces via Fourier multipliers.

\begin{de}
For $u \in \Sc(\R)$, where $\Sc(\R)$ denotes the Schwartz class, and $\beta \in \R$, we define the \textit{Bessel potential of order $-\beta$} by the Fourier multiplier
$$
J^{\beta} u (x):= \left( \langle \xi \rangle^{\beta} \widehat{u}(\xi)\right)^\vee(x),
$$
where, for $x \in \R$, $\langle x \rangle := \sqrt{1 + x^2}$. Here, we use the following definition for the Fourier transform:
$$
\widehat{f}(\xi) := \int_\R e^{-i x \cdot \xi} f(x)\, dx.
$$
We denote by $H^{\beta,p}(\R)$ the usual (inhomogeneous) Sobolev space, equipped with the norm
$$
\|u\|_{H^{\beta,p}} = \|J^{\beta} u\|_{L^{p}}.
$$
Likewise, for $1 \leq p \leq \infty$, we denote by $\dot{H}^{\beta,p}(\R)$ the homogeneous Sobolev space, equipped with the norm
\begin{equation}\label{sobolev homogeneo}
    \|u\|_{\dot{H}^{\beta,p}} = \|D^{\beta} u\|_{L^{p}},
\end{equation}
where $D^{\beta}$ was defined in \eqref{eq:fractional_laplacian}. In the case $p=2$, we omit the superscript $p$.
\end{de}

\medskip
We then recall some classical inequalities that will be repeatedly used in the sequel.

\begin{lem}[Sobolev embedding, see \cite{Grafakos}]
Let $u \in H^\beta(\mathbb{R})$, $\beta > 1/2$. Then, for any $2 \leq p \leq \infty$,
\begin{equation}
    \|u\|_{L^p} \lesssim \|u\|_{H^\beta}.
\end{equation}
\end{lem}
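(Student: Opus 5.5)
The plan is to derive the embedding from the Fourier representation: first obtain the endpoint $p=\infty$, then interpolate with the trivial case $p=2$.

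\emph{Endpoint $p=\infty$.} For $u\in\Sc(\R)$, Fourier inversion gives
$$
|u(x)|\le \frac{1}{2\pi}\int_\R |\widehat u(\xi)|\,d\xi .
$$
Inserting the weight $\langle\xi\rangle^{\beta}\langle\xi\rangle^{-\beta}$ and applying Cauchy--Schwarz yields
$$
\|u\|_{L^\infty}\le \frac{1}{2\pi}\Big(\int_\R \langle\xi\rangle^{-2\beta}\,d\xi\Big)^{1/2}\,\|\langle\xi\rangle^{\beta}\widehat u\|_{L^2}.
$$
The integral $\int_\R\langle\xi\rangle^{-2\beta}\,d\xi$ is finite precisely because $2\beta>1$, which is the only place the hypothesis $\beta>1/2$ enters. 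By Plancherel, $\|\langle\xi\rangle^{\beta}\widehat u\|_{L^2}=\sqrt{2\pi}\,\|J^\beta u\|_{L^2}=\sqrt{2\pi}\,\|u\|_{H^\beta}$, so $\|u\|_{L^\infty}\lesssim\|u\|_{H^\beta}$.

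\emph{Endpoint $p=2$.} Since $\langle\xi\rangle^{\beta}\ge1$ for $\beta>0$, Plancherel gives directly $\|u\|_{L^2}\le\|u\|_{H^\beta}$.

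\emph{Intermediate $2<p<\infty$.} By Hölder (log-convexity of $L^p$ norms),
$$
\|u\|_{L^p}\le \|u\|_{L^2}^{2/p}\,\|u\|_{L^\infty}^{1-2/p}\lesssim \|u\|_{H^\beta},
$$
with an implicit constant depending only on $\beta$ (uniform in $p$).

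\emph{Extension to all of $H^\beta$.} Schwartz functions are dense in $H^\beta(\R)$, so for general $u$ take $u_n\in\Sc(\R)$ with $u_n\to u$ in $H^\beta$. The inequality makes $(u_n)$ Cauchy in $L^p$; its $L^p$ limit agrees almost everywhere with $u$, since $u_n\to u$ in $L^2$ and a subsequence converges almost everywhere. Passing to the limit gives the bound for $u$.

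There is no real obstacle here. The only substantive point is the integrability of $\langle\xi\rangle^{-2\beta}$, which requires $\beta>1/2$; this condition is sharp, as the embedding into $L^\infty$ fails when $\beta=1/2$.
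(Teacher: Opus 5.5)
Your proof is correct, and it is the standard argument for this embedding: bound $\|\widehat u\|_{L^1}$ by Cauchy--Schwarz against $\langle\xi\rangle^{-\beta}\in L^2$, which is where $\beta>1/2$ is needed, then interpolate $L^\infty$ with $L^2$ via Hölder and close by density. The paper does not prove this lemma itself; it only cites Grafakos, so there is no separate route in the paper to compare against.
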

\medskip
The following chain rule estimate controls nonlinearities in Sobolev spaces.

\begin{lem}[Fractional chain rule, see  \cite{Grafakos}]
Let $k$ be a positive integer, $\beta>1/2$, and $u \in H^\beta(\mathbb{R})$. Then
\begin{equation}
    \|u^k\|_{H^\beta} \lesssim \|u\|_{L^\infty}^{k-1}\|u\|_{H^\beta}.
\end{equation}
\end{lem}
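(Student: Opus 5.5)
The argument I have in mind reduces to the one-dimensional Kato--Ponce (fractional Leibniz) inequality combined with induction on $k$. Write $u^k = u\cdot u^{k-1}$ and apply, for $\beta>0$,
\begin{equation*}
\|J^\beta(fg)\|_{L^2}\lesssim \|f\|_{L^\infty}\|J^\beta g\|_{L^2}+\|J^\beta f\|_{L^2}\|g\|_{L^\infty},
\end{equation*}
which is the standard Kato--Ponce estimate from \cite{KatoPonce1988,Grafakos}. With $f=u$ and $g=u^{k-1}$ this gives
\begin{equation*}
\|u^k\|_{H^\beta}\lesssim \|u\|_{L^\infty}\|u^{k-1}\|_{H^\beta}+\|u\|_{H^\beta}\|u^{k-1}\|_{L^\infty}.
\end{equation*}

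Next, the inductive hypothesis $\|u^{k-1}\|_{H^\beta}\lesssim \|u\|_{L^\infty}^{k-2}\|u\|_{H^\beta}$ and the trivial bound $\|u^{k-1}\|_{L^\infty}\le \|u\|_{L^\infty}^{k-1}$ close the induction. The base case $k=1$ is immediate, and the implicit constant depends only on $k$ and $\beta$.

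Two small remarks are needed for the quantities to make sense. The hypothesis $\beta>1/2$ is used only through the Sobolev embedding lemma above, which gives $u\in L^\infty$ so that the right-hand side is finite. A density argument with Schwartz functions justifies applying Kato--Ponce to $u\in H^\beta$: approximate $u$ in $H^\beta$, which by the embedding also gives convergence in $L^\infty$, and pass to the limit using Fatou's lemma on the Fourier side.

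The main obstacle is the Kato--Ponce inequality itself. Its proof uses a Coifman--Meyer paraproduct decomposition of $fg$ into low--high, high--low and high--high frequency interactions, each estimated via Littlewood--Paley square functions. The $L^\infty$ endpoint in the bound is delicate, but in the $L^2$-based form needed here it is classical. I would therefore cite it rather than reprove it.
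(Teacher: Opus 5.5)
Your proposal is correct: applying the Kato--Ponce estimate $\|J^\beta(fg)\|_{L^2}\lesssim \|f\|_{L^\infty}\|J^\beta g\|_{L^2}+\|J^\beta f\|_{L^2}\|g\|_{L^\infty}$ to $u\cdot u^{k-1}$, inducting on $k$, and closing by density does prove the bound. In the density step, $H^\beta\hookrightarrow L^\infty$ for $\beta>1/2$ gives $u_n^k\to u^k$ in $L^2$, so Fatou applies. The paper gives no proof of its own and only cites \cite{Grafakos}, where the estimate is exactly this standard consequence of the Kato--Ponce inequality, so your route is the intended one.
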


\medskip
We also make use of Littlewood-Paley frequency projections.

\begin{de}
Let $\chi(\xi)$ be a radial bump function supported in the ball $\{\xi \in \mathbb{R}^d : |\xi| \leq \sqrt2\}$ and equal to $1$ on the ball $\{\xi \in \mathbb{R}^d : |\xi| \leq 1\}$. For each dyadic number $N \in 2^{\mathbb Z}$, we define the Littlewood-Paley projections by the Fourier multipliers
\begin{equation}
P_{\leq N} f(\xi) := \chi(\xi/N)\widehat{f}(\xi) \quad \text{and} \quad
P_{> N} f(\xi) := \bigl(1 - \chi(\xi/N)\bigr)\widehat{f}(\xi).
\end{equation}
\end{de}

\medskip
Associated to these frequency projections, we recall the standard Bernstein inequalities.

\begin{lem}[Bernstein inequalities, see \cite{livroTao}]
For any $1 \leq p \leq q \leq \infty$, $\beta \in \mathbb{R}$, and $N \in 2^{\mathbb Z}$,
\begin{align}
\| P_{\leq N} f \|_{L^q} &\lesssim \| f \|_{L^p}, \\
\| P_{\leq N} f \|_{L^q} &\lesssim N^{\frac{1}{p} - \frac{1}{q}} \| P_{\leq N} f \|_{L^p}, \\
\|  P_{> N} f \|_{L^p} &\sim N^{-\beta} \|D^\beta P_{> N} f \|_{L^p}.
\end{align}
\end{lem}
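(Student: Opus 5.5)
The plan is to reduce all three estimates to Young's convolution inequality for kernels of Fourier multipliers, using the scaling $\xi\mapsto \xi/N$ to track the powers of $N$.

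\textbf{Kernel of $P_{\leq N}$.} By the definition of $P_{\leq N}$, we have $P_{\leq N}f=K_N*f$ with $K_N(x)=N\check\chi(Nx)$. Since $\chi$ is smooth and compactly supported, $\check\chi\in\Sc(\R)$. Hence, for every $1\le r\le\infty$,
\begin{equation}
\|K_N\|_{L^r}=N^{1-\frac1r}\|\check\chi\|_{L^r}.
\end{equation}

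\textbf{Second inequality.} Set $\widetilde\chi(\xi):=\chi(\xi/2)$, which equals $1$ on the support of $\chi$, and let $\widetilde K_N(x)=N\check{\widetilde\chi}(Nx)$ be the corresponding kernel. Then $P_{\leq N}f=\widetilde K_N*P_{\leq N}f$. Choose $r$ with $1+\frac1q=\frac1r+\frac1p$; this is possible because $p\le q$. Young's inequality then gives
\begin{equation}
\|P_{\leq N}f\|_{L^q}\le\|\widetilde K_N\|_{L^r}\|P_{\leq N}f\|_{L^p}\lesssim N^{\frac1p-\frac1q}\|P_{\leq N}f\|_{L^p},
\end{equation}
since $1-\frac1r=\frac1p-\frac1q$.

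\textbf{First inequality.} Young's inequality with the same $r$ gives $\|P_{\leq N}f\|_{L^q}\lesssim N^{\frac1p-\frac1q}\|f\|_{L^p}$. For $p=q$ this is exactly the stated bound, uniformly in $N$. For $p<q$, dilation invariance forces the factor $N^{\frac1p-\frac1q}$. I would therefore read the stated inequality either with this factor, or with $N$ fixed so that the implicit constant may depend on $N$, and I would record this explicitly when writing the proof.

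\textbf{Third estimate, direction $\lesssim$.} Write
\begin{equation}
P_{>N}f=N^{-\beta}\,m(D/N)\,D^\beta P_{>N}f,
\end{equation}
where $m(\xi)=|\xi|^{-\beta}(1-\chi(\xi/2)\cdot 0+\cdots)$ is chosen as $m(\xi)=|\xi|^{-\beta}\psi(\xi)$, with $\psi$ smooth, vanishing near $0$, and equal to $1$ on $\{|\xi|\geq1\}$, which contains the support of $1-\chi$. This multiplier is a symbol of order $-\beta$. For $\beta>0$ its kernel lies in $L^1$, by integrating by parts in the Fourier integral away from the origin, so the bound $\|P_{>N}f\|_{L^p}\lesssim N^{-\beta}\|D^\beta P_{>N}f\|_{L^p}$ follows from Young's inequality and scaling. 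For $\beta<0$, or for the endpoints $p=1,\infty$, I would instead invoke the Mikhlin multiplier theorem for $1<p<\infty$.

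\textbf{Third estimate, direction $\gtrsim$.} This is the main obstacle. Here the operator $N^{-\beta}D^\beta P_{>N}$ has symbol $|\xi/N|^\beta(1-\chi(\xi/N))$, which is bounded only when $\beta\le0$. So for $\beta\le 0$ (and $1<p<\infty$) this direction follows by the same Mikhlin argument as above. For $\beta>0$ the symbol is unbounded, and the reverse bound cannot hold with a uniform constant unless $f$ has compact frequency support. I would therefore prove the two-sided equivalence in the regime where both symbols are bounded, that is, for a frequency-localized piece $P_{>N}P_{\le MN}f$ with a constant depending on $M$. In the general case I would retain the one-sided bound, which is the form actually used later in the paper.
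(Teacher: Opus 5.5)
Your argument is the standard proof behind the cited reference; the paper gives no proof of its own. You write $P_{\le N}$ as convolution with $N\check\chi(N\cdot)$, insert the fattened cutoff $\chi(\cdot/2)$ and apply Young's inequality for the second bound, and handle $P_{>N}$ through the rescaled multiplier $|\xi|^{-\beta}\psi(\xi)$. Your two corrections to the statement are right. The first bound is uniform in $N$ only when $p=q$; otherwise it carries the factor $N^{\frac1p-\frac1q}$. The two-sided third estimate fails for general $f$. The versions you keep are the ones the paper actually relies on, e.g.\ the one-sided bound with $\beta=\frac\alpha2-s_c>0$ in Lemma \ref{lem:inverse_strichartz}.

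There is one incorrect step. For $\beta<0$, the direction $\|P_{>N}f\|_{L^p}\lesssim N^{-\beta}\|D^\beta P_{>N}f\|_{L^p}$ cannot be obtained from the Mikhlin theorem. The symbol $m(\xi)=|\xi|^{-\beta}\psi(\xi)$ then has positive order $|\beta|$ and is unbounded. The inequality is in fact false: for $f$ frequency-localized at $|\xi|\sim M\gg N$, the left side is $\sim\|f\|_{L^p}$ while the right side is $\sim (N/M)^{|\beta|}\|f\|_{L^p}$. This is the mirror image of what you correctly observe for the $\gtrsim$ direction. The consistent conclusion is:
\begin{itemize}
\item $\lesssim$ holds precisely for $\beta\ge0$;
\item $\gtrsim$ holds precisely for $\beta\le 0$;
\item $\sim$ holds only for band-limited pieces such as $P_{>N}P_{\le MN}f$, with constants depending on $M$.
\end{itemize}

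Mikhlin is also never needed here. A smooth symbol of order $-\gamma<0$ vanishing near the origin has an integrable kernel: its dyadic piece at $|\xi|\sim M$ has kernel of $L^1$ norm $\lesssim M^{-\gamma}$, which is summable over $M\ge1$. So both valid one-sided bounds hold for every $1\le p\le\infty$. Drop your remark about invoking Mikhlin ``for the endpoints $p=1,\infty$'': Mikhlin does not cover the endpoints, and the $L^1$-kernel argument already does. With the sign restriction on $\beta$ made explicit, your proposal proves the correctly stated lemma.
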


\medskip
We also recall the definition of the Hardy-Littlewood maximal operator, which plays an important role in estimates related to the Caffarelli-Silvestre fractional extension.

\begin{de}\label{fun. Maximal}
For $f \in L^{1}_{\mathrm{loc}}(\mathbb{R})$, we define the Hardy-Littlewood maximal operator by
$$
\mathcal{M}f(x) := \sup_{B \ni x} \frac{1}{|B|} \int_{B} |f(y)|\, dy,
$$
where the supremum is taken over all balls $B \subset \mathbb{R}$ containing $x$.
\end{de}
\medskip
Next, we introduce the mixed space-time Lebesgue norms that will be used throughout the paper, together with the corresponding Strichartz estimates.

\begin{de}
For a time interval $I \subset \R$, and $1\leq p,q\leq \infty$, we define the mixed Lebesgue spaces 
$L^p_x L^q_t (I) = L^p_x(\R:L^q_t(I))$ by the norm
\begin{equation}
    \|f\|_{L^p_x L^q_t (I)} = \left\| \|f\|_{ L^q_t (I)}\right\|_{L^p_x(\R)}.
\end{equation}
The mixed spaces $\dot H^{\beta,p}_x L^q_t$ and $H^{\beta,p}_x L^q_t$ are defined by the norms
\begin{equation}
    \|f\|_{\dot H^{\beta,p}_x L^q_t (I)} := \left\| \|D^\beta_xf\|_{ L^q_t (I)}\right\|_{L^p_x(\R)},\quad\text{and}\quad\|f\|_{H^{\beta,p}_x L^q_t (I)}: = \left\| \|J^\beta_xf\|_{ L^q_t (I)}\right\|_{L^p_x(\R)}.
\end{equation}
\end{de}

We also present the functional spaces repeatedly used throughout the text.

\begin{de}\label{de:functional_spaces}
    
For $\alpha>1$, we fix the indices
\begin{equation}
    p_0 = \frac{4\alpha+2}{\alpha-1}, \quad\text{and}\quad q_0 = \frac{4\alpha+2}{3},
\end{equation}
and consider the resolution space
\begin{equation}
   {X}(I) =  
    C_t  H^{\frac\alpha2}_x (I \times \mathbb R) \cap  H^{\frac{\alpha+1}{2},p_0}_xL^{q_0}_t (I \times \mathbb R),
\end{equation}
endowed with the canonical norm. We also fix the following indices for $\alpha>1$, $k \geq 4$, 
 $$
 {p_{s_c}}= \frac{(2\alpha+1)k}{\alpha+2},\quad\text{and}\quad
 {q_{s_c}}= \frac{(2\alpha+1)k}{2(\alpha-1)},
 $$
and consider the \textit{scattering space}
\begin{equation}
 S(I) = L^{ p_{s_c}}_xL^{ q_{s_c}}_t(I \times \mathbb R) \cap \dot H^{s_c+\frac{1}{2},p_0}_xL^{q_0}_t (I \times \mathbb R).
\end{equation} 
To estimate the nonlinearity and forcing terms, we consider, for $\alpha>1$ and $s\in \R$, the homogeneous and inhomogeneous mixed spaces
\begin{equation}
\dot N ^s(I) =  \dot H^{s+\frac{1}{2},\frac{4\alpha+2}{3(\alpha+1)}}_x L^{\frac{4\alpha+2}{4\alpha-1}}_t (I \times \mathbb R)\quad\text{and}\quad  N ^s(I) =  H^{s+\frac{1}{2},\frac{4\alpha+2}{3(\alpha+1)}}_x L^{\frac{4\alpha+2}{4\alpha-1}}_t (I \times \mathbb R).
\end{equation}
In particular, for the perturbation theory, we typically consider $s = \frac{\alpha}{2}$ or $s = s_c$. 
\end{de}

\begin{de}[Linear propagator]
    For $\alpha>1$, we let $\{V_{\alpha}(t)\}_{t \in \R}$ denote the linear propagator
associated with~\eqref{DGBO}, that is, for $f \in \mathcal{S}(\R)$
\begin{equation}\label{free evolution group}
\bigl(V_{\alpha}(t)f\bigr)(x) := \left( e^{-it|\xi|^\alpha\xi} \widehat{f}(\xi) \right)^\vee(x).
\end{equation}
\end{de}

\begin{lem}[Strichartz Estimates {\cite[Theorem 2.2 and Lemma 2.3]{CamposLinaresSantos2024}}] The following estimates hold:
\begin{itemize}
\item \underline{Homogeneous estimates:}
\begin{align}\label{strichartz_X}
\|V_\alpha(t)f\|_{X(I)} &\lesssim \|f\|_{H^{\frac{\alpha}{2}}_x},\\ \label{strichartz_S}
\left\|V_\alpha(t)f\right\|_{S(I)} &\lesssim \|f\|_{\dot H^{s_c}_x}.
\end{align}

\item \underline{Inhomogeneous estimates:}
\begin{align}
\label{inhomogeneous_strichartz_X}\left\| \int_I V_\alpha(t-\tau) \partial_xg \, d\tau \right\|_{X(I)} &\lesssim \|g\|_{N^{\frac{\alpha}{2}}(I)},\\
\left\| \int_I V_\alpha(t-\tau) \partial_xg \, d \tau \right\|_{S(I)} &\lesssim \|g\|_{\dot N^{s_c}(I)}. \label{inhomogeneous_strichartz_S}
\end{align}
\end{itemize}
\end{lem}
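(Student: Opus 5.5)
The plan is to deduce all four estimates from the basic dispersive estimates for $V_\alpha(t)$: the Kenig–Ponce–Vega local smoothing estimate, the maximal-function estimate, and the $TT^*$ machinery in the mixed $L^p_xL^q_t$ norms. For the homogeneous bound \eqref{strichartz_X}, the $C_tH^{\alpha/2}_x$ part follows from unitarity of $V_\alpha(t)$ on $H^{\alpha/2}$, since it commutes with $J^{\alpha/2}$.

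For the $H^{\frac{\alpha+1}{2},p_0}_xL^{q_0}_t$ component, we interpolate, via Stein's analytic interpolation in the mixed norms, between two endpoints. The first is the local smoothing estimate $\|D^{\alpha/2}V_\alpha(t)f\|_{L^\infty_xL^2_t}\lesssim\|f\|_{L^2}$, which comes from the change of variables $\eta=|\xi|^\alpha\xi$ and Plancherel in $t$. The second is the maximal-function estimate $\|D^{-s}V_\alpha(t)f\|_{L^4_xL^\infty_t}\lesssim \|f\|_{L^2}$ for the appropriate $s$ (of order $1/4$). The exponents $p_0=\frac{4\alpha+2}{\alpha-1}$ and $q_0=\frac{4\alpha+2}{3}$ arise exactly from interpolating these with parameter $\theta$ chosen to balance the derivative counts, giving $\frac{\alpha+1}{2}-\frac{\alpha}{2}$ extra gain in the right scaling. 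For \eqref{strichartz_S}, the $\dot H^{s_c+1/2,p_0}_xL^{q_0}_t$ part is the same estimate applied to $D^{s_c}f$, using homogeneity. The $L^{p_{s_c}}_xL^{q_{s_c}}_t$ part then follows by a Sobolev embedding in $x$ applied to the previous norm, with $q_{s_c}\ge q_0$ handled by interpolating once more with the maximal estimate. The exponents are dictated by scaling \eqref{scale}, which serves as a consistency check.

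The inhomogeneous bounds \eqref{inhomogeneous_strichartz_X}–\eqref{inhomogeneous_strichartz_S} follow by duality. The $N^s$ exponents $\bigl(\frac{4\alpha+2}{3(\alpha+1)},\frac{4\alpha+2}{4\alpha-1}\bigr)$ are precisely the dual pair of the $(p_0,q_0)$-type norm after absorbing $\partial_x$, which accounts for the derivative $+\frac12$ index shift. The dual of the homogeneous estimate bounds $\int_\R V_\alpha(-\tau)\partial_x g\,d\tau$ in $H^{s}$. Composing with the homogeneous estimate ($TT^*$) controls the untruncated Duhamel operator $\int_\R V_\alpha(t-\tau)\partial_x g\,d\tau$ in $X$ and $S$. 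To pass to the truncated integral over $\tau<t$ we use the Christ–Kiselev lemma, which applies since the time exponent of the source, $\frac{4\alpha+2}{4\alpha-1}$, is strictly smaller than $q_0$ and $q_{s_c}$. Alternatively, since the spaces place $L_t$ inside $L_x$, we can use a Kenig–Ponce–Vega style direct argument with the sign function in time.

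The main obstacle is the mixed-norm endpoint interpolation and duality, since the norms are $L^p_xL^q_t$ (space outside). This rules out standard Keel–Tao and requires the operator-valued Christ–Kiselev variant adapted to $L^p_x(L^q_t)$. I would first check that the Christ–Kiselev condition (source time exponent strictly less than target time exponent) holds for every pairing needed, and otherwise fall back on directly estimating the retarded operator through the smoothing kernel $|\xi|^\alpha$ in one variable. As the statement cites \cite[Theorem 2.2 and Lemma 2.3]{CamposLinaresSantos2024}, these steps are carried out there and can be invoked directly.
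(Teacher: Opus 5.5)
The paper gives no argument for this lemma beyond citing \cite[Theorem 2.2 and Lemma 2.3]{CamposLinaresSantos2024}. Your outline (Kato smoothing, maximal function, interpolation, $TT^*$, Christ--Kiselev) is the standard route, and it correctly gives the $C_tH^{\frac\alpha2}_x$ part and both $S$-estimates. For the $L^{p_{s_c}}_xL^{q_{s_c}}_t$ piece, interpolating at time exponent $q_{s_c}$ lands at a spatial exponent $\leq p_{s_c}$ exactly when $k\geq4$, so the vector-valued Sobolev step goes the right way.

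The gap is in the $H^{\frac{\alpha+1}{2},p_0}_xL^{q_0}_t$ component of \eqref{strichartz_X}. Your interpolation yields only the homogeneous bound $\|D^{\frac12}V_\alpha(t)f\|_{L^{p_0}_xL^{q_0}_t}\lesssim\|f\|_{L^2}$. Passing from $D^{\frac{\alpha+1}{2}}$ to $J^{\frac{\alpha+1}{2}}$ also requires $\|P_{\leq 1}V_\alpha(t)f\|_{L^{p_0}_xL^{q_0}_t(I\times\R)}\lesssim\|f\|_{L^2}$. For $I=\R$ (and, by time reversal, $I=\R_+$) this is false, so the step cannot be repaired. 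To see this, note that $\frac1{p_0}+\frac{\alpha+1}{q_0}=1$. Take $\widehat f$ supported in $\{1\leq|\xi|\leq 2\}$ and set $f_\lambda=\lambda^{-\frac12}f(\cdot/\lambda)$ for $\lambda\geq2$. Then $\|V_\alpha(t)f_\lambda\|_{L^{p_0}_xL^{q_0}_t(\R\times\R)}=\lambda^{\frac12}\|V_\alpha(t)f\|_{L^{p_0}_xL^{q_0}_t(\R\times\R)}$, where the right-hand norm is finite and nonzero by your homogeneous bound. Meanwhile $\|f_\lambda\|_{H^{\frac\alpha2}}\lesssim\|f\|_{L^2}$, and $\|v\|_{L^{p_0}_xL^{q_0}_t}\lesssim\|J^{\frac{\alpha+1}{2}}v\|_{L^{p_0}_xL^{q_0}_t}$ for $v$ with frequencies in $[-1,1]$.

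Hence \eqref{strichartz_X} cannot hold with a uniform constant on unbounded intervals. Your method does prove either of the following: the global estimate with $\dot H^{\frac{\alpha+1}{2},p_0}_x$ in place of $H^{\frac{\alpha+1}{2},p_0}_x$; or the stated estimate on bounded $I$ with constant $\lesssim 1+|I|^{1/q_0}$, handling low frequencies by Minkowski (using $p_0\geq q_0$), H\"older in $t$, and Bernstein. Before invoking the reference directly, check which of these it actually proves.

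The same point affects \eqref{inhomogeneous_strichartz_X} as you derive it. Bounding $\int V_\alpha(-\tau)\partial_x g\,d\tau$ in $H^{\frac\alpha2}$ and then applying the homogeneous $X$-estimate composes with the false low-frequency bound. The inhomogeneous estimate is nevertheless true on every $I$, provided you run $TT^*$ on the homogeneous scale. With $T=D^{\frac12}V_\alpha(t)$ and $\partial_x=-\mathcal H D$, one has $\int V_\alpha(t-\tau)\partial_x g\,d\tau=-TT^*(\mathcal H g)$. The two half-derivatives supplied by $\partial_x$ then exactly cancel the low-frequency loss, and $J^{\frac{\alpha+1}{2}}$ commutes onto $g$, producing the $N^{\frac\alpha2}$ norm. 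The $C_tH^{\frac\alpha2}_x$ part follows from the dual estimate, since $|\xi|^{\frac12}\langle\xi\rangle^{-\frac12}$ is a Mikhlin multiplier. Finally, the lemma is stated with $\int_I$, the non-retarded operator, so Christ--Kiselev is needed only for the Duhamel term $\int_0^t$ used later, not for the statement itself.
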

\begin{lem}[Nonlinear estimates {\cite[Proposition 3.1]{CamposLinaresSantos2024}}] The following estimates hold:
\begin{align}
       \|f^k g\|_{N^{\frac{\alpha}{2}}(I)}& \lesssim \|f\|_{S(I)}^k \|g\|_{X(I)} + \|f\|_{S(I)}^{k-1}\|f\|_{X(I)}\|g\|_{S(I)},\\
       \|f^k g\|_{\dot N^{s_c}(I)} &\lesssim \|f\|_{S(I)}^k \|g\|_{S(I)}.
\end{align}
    
\end{lem}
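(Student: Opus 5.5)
The plan is to reduce both estimates to Hölder's inequality in the mixed norms $L^p_xL^q_t$ combined with a fractional Leibniz (product) rule valid in these mixed spaces. The first step is an exponent count, which makes the whole argument work. Write $r=\frac{4\alpha+2}{3(\alpha+1)}$ and $m=\frac{4\alpha+2}{4\alpha-1}$ for the exponents of $N^s$, $\dot N^s$. A direct computation gives
\begin{equation*}
\frac{k}{p_{s_c}}+\frac{1}{p_0}=\frac{(\alpha+2)+(\alpha-1)}{2\alpha+1}\cdot\frac{2\alpha+1}{4\alpha+2}\cdot 2=\frac{3(\alpha+1)}{4\alpha+2}=\frac1r,
\end{equation*}
and
\begin{equation*}
\frac{k}{q_{s_c}}+\frac{1}{q_0}=\frac{2(\alpha-1)}{2\alpha+1}+\frac{3}{4\alpha+2}=\frac{4\alpha-1}{4\alpha+2}=\frac1m .
\end{equation*}
So placing $k$ undifferentiated factors in $L^{p_{s_c}}_xL^{q_{s_c}}_t$ and one differentiated factor in $L^{p_0}_xL^{q_0}_t$ is exactly Hölder-admissible. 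Note also that $1<r,m<\infty$ since $\alpha>1$, so we stay in the range where Kenig--Ponce--Vega type Leibniz rules in $L^p_xL^q_t$ hold.

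\textbf{Homogeneous estimate.} Set $\sigma=s_c+\frac12\in(0,1)$. I would apply the fractional Leibniz rule in mixed norms to the product $f^k\cdot g$, and then again to $f\cdot f^{k-1}$, to obtain
\begin{equation*}
\|D^\sigma(f^kg)\|_{L^r_xL^m_t}\lesssim \|f^k\|_{L^{p_{s_c}/k}_xL^{q_{s_c}/k}_t}\|D^\sigma g\|_{L^{p_0}_xL^{q_0}_t}+\|f^{k-1}g\|_{L^{p_{s_c}/k}_xL^{q_{s_c}/k}_t}\|D^\sigma f\|_{L^{p_0}_xL^{q_0}_t}.
\end{equation*}
Equivalently, one can distribute $D^\sigma$ over the $k+1$ factors. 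Hölder's inequality then bounds each undifferentiated factor by its $L^{p_{s_c}}_xL^{q_{s_c}}_t$ norm, and each differentiated factor by the $\dot H^{s_c+\frac12,p_0}_xL^{q_0}_t$ part of $S(I)$. This yields $\|f\|_{S}^k\|g\|_S$.

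\textbf{Inhomogeneous estimate.} Set $\sigma=\frac{\alpha+1}{2}$, and split $J^\sigma$ into its $L^2$-level part and a homogeneous part $D^\sigma$; the multiplier $\langle\xi\rangle^\sigma(1+|\xi|^\sigma)^{-1}$ is a Mikhlin multiplier, bounded on mixed spaces. The zeroth-order term $f^kg$ is bounded by Hölder by $\|f\|_{L^{p_{s_c}}L^{q_{s_c}}}^k\|g\|_{L^{p_0}L^{q_0}}\le\|f\|_S^k\|g\|_X$. For $D^\sigma(f^kg)$ the same Leibniz decomposition applies. When the derivative hits $g$ we get $\|f\|_S^k\|g\|_X$, using the $H^{\frac{\alpha+1}{2},p_0}_xL^{q_0}_t$ component of $X$. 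When it hits a factor of $f$, we get $\|f\|_S^{k-1}\|g\|_S\|f\|_X$, placing $g$ and the remaining $f$'s in $L^{p_{s_c}}_xL^{q_{s_c}}_t$.

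\textbf{Main obstacle.} The delicate point is the product rule itself in the time-inside mixed norms $L^p_xL^q_t$, with $\sigma$ possibly exceeding $1$ (since $\frac{\alpha+1}{2}\in(1,\frac32]$). I would first handle $\sigma\le1$ via the Kenig--Ponce--Vega Leibniz rule for $L^p_xL^q_t$ (proved through vector-valued Calderón--Zygmund and Littlewood--Paley theory). For $\sigma=1+\theta$, I would write $D^\sigma=\mathcal H\partial_x D^\theta$, apply the ordinary Leibniz rule to $\partial_x$, and then the fractional rule of order $\theta$ to each resulting product. The same exponent count, with intermediate Sobolev-in-$x$ interpolation between the $S$ and $X$ components to place mixed-derivative factors, closes the estimate.
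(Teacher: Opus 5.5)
The paper contains no proof of this lemma. It is imported from \cite{CamposLinaresSantos2024} (Proposition 3.1 there), so there is no in-text argument to compare yours with. Judged on its own, your outline takes the standard route for such estimates and it closes: H\"older with the two exponent identities, the Kenig--Ponce--Vega Leibniz rule in $L^p_xL^q_t$ of order in $(0,1)$, and interpolation. The homogeneous estimate is fine as written. Indeed $s_c+\frac12=1-\frac{\alpha}{k}\in[\frac12,1)$, and all exponents involved ($\frac{p_{s_c}}{k}=\frac{2\alpha+1}{\alpha+2}$, $\frac{q_{s_c}}{k}=\frac{2\alpha+1}{2(\alpha-1)}$, $p_0$, $q_0$, $r$, $m$) lie in $(1,\infty)$. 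There is one small slip: the middle expression in your first display equals $1$. It should be $\frac{2(\alpha+2)+(\alpha-1)}{4\alpha+2}$; your conclusion $\frac1r$ is correct.

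The step that should be made explicit is the interpolation for $\sigma=\frac{\alpha+1}{2}$, because the choice of endpoints matters. Set $\frac{1}{a_\mu}=\frac{1-\mu}{p_{s_c}}+\frac{\mu}{p_0}$ and $\frac{1}{b_\mu}=\frac{1-\mu}{q_{s_c}}+\frac{\mu}{q_0}$, and use
\begin{equation*}
\|D^{\mu\sigma}h\|_{L^{a_\mu}_xL^{b_\mu}_t}\lesssim \|h\|_{L^{p_{s_c}}_xL^{q_{s_c}}_t}^{1-\mu}\,\|D^{\sigma}h\|_{L^{p_0}_xL^{q_0}_t}^{\mu},\qquad 0\le\mu\le 1.
\end{equation*}
This follows from Stein's analytic interpolation, since $D^{i\gamma}$ is bounded on $L^p_xL^q_t$ for $1<p,q<\infty$.

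Now write $D^\sigma=\mathcal H D^\theta\partial_x$ and apply the order-$\theta$ Leibniz rule. Each resulting term has $k+1$ factors carrying $s_0,\dots,s_k$ derivatives with $\sum_i s_i=1+\theta=\sigma$. Put the $i$-th factor at $\mu_i=s_i/\sigma$. Then $\sum_i\mu_i=1$, so by your identities the H\"older exponents add up to $(\frac1r,\frac1m)$. The weighted AM--GM inequality $\prod_i x_i^{\mu_i}\le\sum_i\mu_i x_i$ then produces exactly $\|f\|_{S}^k\|g\|_{X}+\|f\|_{S}^{k-1}\|f\|_{X}\|g\|_{S}$.

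By contrast, interpolating between the two derivative norms $\dot H^{s_c+\frac12,p_0}_xL^{q_0}_t$ and $\dot H^{\frac{\alpha+1}{2},p_0}_xL^{q_0}_t$ does not work. Suppose, say, $\partial_xg$ is placed in $L^{p_0}_xL^{q_0}_t$. If the remaining $k$ factors are measured in these $L^p_xL^q_t$-type norms, they are forced into $L^{p_{s_c}}_xL^{q_{s_c}}_t$ exactly, and then they cannot absorb the $\theta$ derivatives.
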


\section{An overview of the well-posedness theory}\label{sec-3}
One of the main ingredients in the compactness-contradiction argument is the
perturbation theory, which ultimately stems from the local well-posedness of the
Cauchy problem. As in many other canonical nonlinear dispersive equations, the local
theory for \eqref{DGBO} relies fundamentally on local smoothing estimates. A complete
critical well-posedness result was established recently by the authors in \cite{CamposLinaresSantos2024}. Therefore, this section is devoted to recalling the critical and subcritical well-posedness
results for \eqref{DGBO} equation for $k \geq 4$, and we refer the reader to \cite{CamposLinaresSantos2024} for further details on the proofs.

We note, however, that we work here in $H^{\frac\alpha2}(\R)$, in the defocusing, energy-subcritical case and with even values of $k$. Therefore, the well-posedness theory and the coercivity of the energy \eqref{eq:energy} guarantee that all solutions are  uniformly bounded in $H^{\frac\alpha2}(\R)$, hence globally defined. Accordingly, we state the results of \cite{CamposLinaresSantos2024} adapted to our setting.


\begin{de}[Global solution for \eqref{DGBO}]\label{def gwp} Let $\alpha \in (1,2]$, $k\in \Z^+$ even, and $u_0 \in H^{\frac\alpha2}(\R)$. We say that $u \in C_tH^{\frac{\alpha}{2}}_x(\R \times \R$) is a (global) solution to \eqref{DGBO} with initial data $u_0$ if $u \in {X}(I) \cap S(I) $ for any compact interval $I\subset \R$ and satisfies the Duhamel formula for all $t \geq 0$:
\begin{equation}\label{eq:duhamel}
u(t) = V_\alpha(t) u_0 
  + \int_{0}^{t} V_\alpha(t-t^\prime) \, \partial_x(u^{k+1})(t^\prime)\, dt^\prime.
\end{equation}

\end{de}

The next result establishes global well-posedness of \eqref{DGBO}. It is a corollary of the well-posedness theory from \cite{CamposLinaresSantos2024} and the coercivity given by the energy \eqref{eq:energy} in the defocusing case.
\begin{teo}[Subcritical local theory in $H^{\frac\alpha2}$, {\cite[Theorem 1.8]{CamposLinaresSantos2024}}]\label{Teorema existencia critica} Let $\alpha \in (1,2]$ and $k \geq 4$ be even. Then, for any $u_0 \in H^{\frac\alpha2}(\mathbb{R})$, there exists a unique global solution $u$
to \eqref{DGBO} with initial data $u_0$. Moreover, the data-to-solution map $u_0 \mapsto u$ is locally uniformly continuous.

\end{teo}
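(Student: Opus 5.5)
The plan is to deduce Theorem \ref{Teorema existencia critica} from the local well-posedness theory of \cite{CamposLinaresSantos2024} together with a priori control coming from the conservation laws. The statement is essentially a corollary, so the work is in checking that the local theory has the right form for a continuation argument.

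\textbf{Local theory.} First I will recall, or sketch via contraction, the local result in $H^{\frac\alpha2}$. For $u_0\in H^{\frac\alpha2}(\R)$, consider the Duhamel map $\Phi(u)=V_\alpha(t)u_0+\int_0^t V_\alpha(t-t')\partial_x(u^{k+1})\,dt'$ on a ball of $X(I)\cap S(I)$.
\begin{itemize}
\item The homogeneous estimates \eqref{strichartz_X}--\eqref{strichartz_S} and the inhomogeneous estimates \eqref{inhomogeneous_strichartz_X}--\eqref{inhomogeneous_strichartz_S}, combined with the nonlinear estimates with $f=g=u$, give
\[
\|\Phi(u)\|_{X(I)}\lesssim \|u_0\|_{H^{\frac\alpha2}}+\|u\|_{S(I)}^k\|u\|_{X(I)}, \qquad \|\Phi(u)\|_{S(I)}\lesssim \|V_\alpha(t)u_0\|_{S(I)}+\|u\|_{S(I)}^{k+1}.
\]
\item Differences are handled by the same estimates, using the algebraic identity for $u^{k+1}-v^{k+1}$.
\item Since $\|V_\alpha(t)u_0\|_{S(\R)}<\infty$ by \eqref{strichartz_S} and Sobolev ($\dot H^{s_c}$ is controlled by $H^{\frac\alpha2}$, as $0<s_c<\frac\alpha2$), dominated convergence makes $\|V_\alpha(t)u_0\|_{S(I)}$ small for $|I|$ small.
\end{itemize}
This yields a unique solution on some interval. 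Because the problem is subcritical, the interval length $T$ depends only on $\|u_0\|_{H^{\frac\alpha2}}$.

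I expect this uniformity of $T$ to be the main obstacle. The cleanest route is to exploit subcriticality directly: by Hölder in time, the $S(I)$ norm of the linear evolution should be bounded by $|I|^{\theta}\|u_0\|_{H^{\frac\alpha2}}$ for some $\theta>0$. Alternatively, I will invoke the subcritical statement of \cite[Theorem 1.8]{CamposLinaresSantos2024} directly, which already provides $T=T(\|u_0\|_{H^{\frac\alpha2}})$.

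\textbf{A priori bound.} Next I will use the conservation of mass \eqref{eq:mass} and energy \eqref{eq:energy}, first for smooth solutions and then for $H^{\frac\alpha2}$ solutions by approximation and continuous dependence. Since $k$ is even, $u^{k+2}\geq0$, so
\[
\|u(t)\|_{H^{\frac\alpha2}}^2\lesssim M[u_0]+2E[u_0].
\]
The right-hand side is finite because the Sobolev embedding puts $u_0\in L^{k+2}$. Iterating the local result with a step size $T$ depending only on this bound extends the solution to all of $\R$, and the Duhamel formula \eqref{eq:duhamel} persists by the group property.

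\textbf{Uniqueness and continuity.} Uniqueness in $X(I)\cap S(I)$ follows from the contraction on short intervals, patched together. For local uniform continuity of the data-to-solution map, I will show that on any compact interval the difference of two solutions satisfies the same difference estimates. Iterating these over finitely many subintervals, with the number of subintervals controlled by the uniform $H^{\frac\alpha2}$ bound, gives uniform continuity on bounded sets of data.
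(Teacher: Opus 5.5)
Your proposal is correct and follows the same route as the paper, which treats this statement as a corollary of the local theory of \cite{CamposLinaresSantos2024} (with existence time depending only on $\|u_0\|_{H^{\frac\alpha2}}$) combined with the coercivity of the conserved mass and energy; for even $k$ this gives $\|u(t)\|_{H^{\frac\alpha2}}^2\lesssim M[u_0]+2E[u_0]$ uniformly in time. Your fallback of citing \cite[Theorem 1.8]{CamposLinaresSantos2024} for the uniform local time is exactly what the paper relies on (your H\"older-in-time route is unverified but unnecessary), and your only slip is that $s_c=0$ when $(\alpha,k)=(2,4)$, which is harmless since $L^2$ is still controlled by $H^{\frac\alpha2}$.
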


The authors also showed that controlling the $S$-norms plays a key role in
understanding the long-time behavior of solutions to \eqref{DGBO}. More
specifically, we have:

\begin{teo}[Scattering criterion, {\cite[Proposition 1.5]{CamposLinaresSantos2024}}]\label{prop:scattering_criterion}
Let $u_0 \in H^{\frac\alpha2}(\R)$ and let $u$ be the solution to \eqref{DGBO} given by Theorem \ref{Teorema existencia critica}. If
\begin{equation*}
    \|u\|_{S(\R_+)} < + \infty,
\end{equation*}
then $u$ scatters forward in time in $H^{\frac\alpha2}(\R)$. Moreover, there exists a function $C: [0,+\infty) \to [0,+\infty)$ such that
\begin{equation}
\|u\|_{X(\R_+)} \leq C(\|u\|_{S(\R_+)}) \|u_0\|_{H^\frac \alpha 2}.
\end{equation}

The analogous statement holds for negative times.

\end{teo}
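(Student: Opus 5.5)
The plan is the standard Duhamel-based argument, using the Strichartz estimates \eqref{strichartz_X}--\eqref{inhomogeneous_strichartz_S} and the nonlinear estimates of Section \ref{sec-2}; the only real work is a bootstrap on short intervals.

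\textbf{Step 1: finiteness of the $X$-norm.} Set $A:=\|u\|_{S(\R_+)}<\infty$ and fix a small $\eta>0$, to be chosen depending only on the implicit constants. Partition $\R_+$ into finitely many consecutive intervals $I_j=[t_j,t_{j+1})$, $j=0,\dots,J-1$, with $\|u\|_{S(I_j)}\le\eta$. Since the $S$-norm is a mixed $L^p_xL^q_t$ norm with $q<\infty$, it is continuous under shrinking or extending the time interval, so such a partition exists with $J\lesssim 1+(A/\eta)^{q}$ for a suitable exponent $q$ (one can take the larger of $q_{s_c}$ and $q_0$).

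On each $I_j$, write the Duhamel formula from $t_j$ and apply \eqref{strichartz_X}, \eqref{inhomogeneous_strichartz_X} and the first nonlinear estimate with $f=g=u$. This gives
\begin{equation*}
\|u\|_{X(I_j)}\le C\|u(t_j)\|_{H^{\frac\alpha2}}+C\eta^{k}\|u\|_{X(I_j)}.
\end{equation*}
Here $\|u\|_{X(I_j)}$ is finite a priori when $I_j$ is compact, by Definition \ref{def gwp}. The last interval is unbounded, so on it I first work on $[t_{J-1},T]$ and then let $T\to\infty$ by monotone convergence. Choosing $\eta$ with $C\eta^k\le\frac12$ absorbs the nonlinear term and yields $\|u\|_{X(I_j)}\le 2C\|u(t_j)\|_{H^{\frac\alpha2}}$.

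Since $\|u(t_{j+1})\|_{H^{\frac\alpha2}}\le\|u\|_{X(I_j)}$ (the $X$-norm contains $C_tH^{\frac\alpha2}_x$), iterating over the intervals gives
\begin{equation*}
\|u\|_{X(\R_+)}\le \sum_j\|u\|_{X(I_j)}\le \sum_j (2C)^{j+1}\|u_0\|_{H^{\frac\alpha2}},
\end{equation*}
which is of the form $C(A)\|u_0\|_{H^{\frac\alpha2}}$. (Alternatively, energy and mass conservation give a uniform $H^{\frac\alpha2}$ bound that avoids the geometric growth, but that is not needed.)

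\textbf{Step 2: scattering.} Define
\begin{equation*}
u_0^+:=u_0+\int_0^\infty V_\alpha(-t')\,\partial_x(u^{k+1})(t')\,dt'.
\end{equation*}
Then
\begin{equation*}
\|u(t)-V_\alpha(t)u_0^+\|_{H^{\frac\alpha2}}=\Bigl\|\int_t^\infty V_\alpha(-t')\,\partial_x(u^{k+1})\,dt'\Bigr\|_{H^{\frac\alpha2}},
\end{equation*}
using that $V_\alpha$ is unitary on $H^{\frac\alpha2}$. By the $C_tH^{\frac\alpha2}$ part of \eqref{inhomogeneous_strichartz_X}, applied on $[t,\infty)$ and in its dual, Christ--Kiselev-free form (the integral is taken over the whole interval, as stated), this is bounded by
\begin{equation*}
\|u^{k+1}\|_{N^{\frac\alpha2}([t,\infty))}\lesssim \|u\|_{S([t,\infty))}^k\|u\|_{X(\R_+)}.
\end{equation*}
The right-hand side tends to $0$ as $t\to\infty$ by dominated convergence, because both $q$-exponents in $S$ are finite. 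The same bound, applied to tails $[s,t]$, shows that the integral defining $u_0^+$ converges (it is Cauchy) in $H^{\frac\alpha2}$. Negative times are handled symmetrically.

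\textbf{Main obstacle.} The delicate point is rigor in Step 1 on the unbounded last interval, where the a priori finiteness needed to absorb the nonlinear term is not given by Definition \ref{def gwp}. The truncation to $[t_{J-1},T]$ followed by a limit $T\to\infty$ handles this. A second point to check is that the inhomogeneous estimate is used with the integral over $[t,\infty)$, which is exactly the form stated in the lemma.
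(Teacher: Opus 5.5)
Your proof is correct. The paper does not reprove this statement but imports it from \cite{CamposLinaresSantos2024}. Your argument is the standard one: finitely many intervals of small $S$-norm, absorption of the nonlinearity in $X$ and iteration, then the Cauchy property of $V_\alpha(-t)u(t)$ in $H^{\frac\alpha2}$ via the whole-interval estimate and $\|u\|_{S([t,\infty))}\to0$. It uses the same splitting-and-absorption scheme the paper runs in the proof of Proposition \ref{prop:perturbation}.

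The one point to tighten is the count of intervals. A bound depending only on $\|u\|_{S(\R_+)}$ comes not from continuity but from superadditivity of $\|\cdot\|_{L^p_xL^q_t(I)}^{\max(p,q)}$ over disjoint intervals. Your exponent $q_{s_c}$ does work, since $q_{s_c}\geq p_{s_c}$ and $q_{s_c}=\frac{k}{4}p_0\geq p_0>q_0$.
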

As a consequence, the scattering theory for small initial data follows.
\begin{cor}[Small data theory,{\cite[Corollary 2.6]{CamposLinaresSantos2024}}]\label{cor:critical_small_data} For $\alpha \in (1,2]$ and $k \geq 4$ there exists $\delta_{sd}:=\delta_{sd}(\alpha,k)>0$ such that, if $u_0 \in {H}^{\frac\alpha2}(\R)$ satisfies
\begin{equation}\label{time_well_posedness}
    \|u_0\|_{H^{\frac\alpha2}} < \delta_{sd},
\end{equation}
then the solution $u$ obtained in Theorem \ref{Teorema existencia critica} satisfies
\begin{equation}
    \|u\|_{X(\R)} \lesssim \|u_0\|_{H^{\frac \alpha 2}}\quad\text{and}\quad \|u\|_{S(\R)} \leq 2\|V_{\alpha}(t)u_0\|_{S(\R)}.
\end{equation}
Moreover, $u$ scatters forward and backward in ${H}^{\frac\alpha2}(\R)$.
\end{cor}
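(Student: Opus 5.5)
The statement to prove is Corollary \ref{cor:critical_small_data}. I will prove it by a contraction argument in the scattering norm $S(\R)$, followed by an a posteriori bound in $X(\R)$ and an appeal to Theorem \ref{prop:scattering_criterion}. Throughout, $u$ is the global solution from Theorem \ref{Teorema existencia critica}, written via the Duhamel formula \eqref{eq:duhamel}.

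\emph{Step 1 (smallness of the linear part).} Since $\alpha\in(1,2]$ and $k\ge4$, we have $0\le s_c=\frac12-\frac\alpha k<\frac\alpha2$. Hence $\|u_0\|_{\dot H^{s_c}}\le\|u_0\|_{H^{\frac\alpha2}}$, and by \eqref{strichartz_S},
\[
\eta:=\|V_\alpha(t)u_0\|_{S(\R)}\lesssim \|u_0\|_{H^{\frac\alpha2}}<C\delta_{sd}.
\]
Also, \eqref{strichartz_X} gives $\|V_\alpha(t)u_0\|_{X(\R)}\lesssim\|u_0\|_{H^{\frac\alpha2}}$.

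\emph{Step 2 (continuity argument in $S$).} Fix an arbitrary compact interval $I\ni 0$. Apply \eqref{inhomogeneous_strichartz_S} to the Duhamel term, then the second nonlinear estimate with $f=g=u$, so that $f^k g=u^{k+1}$:
\[
\|u\|_{S(I)}\le \eta + C\|u^{k+1}\|_{\dot N^{s_c}(I)}\le \eta+C\|u\|_{S(I)}^{k+1}.
\]
The map $I\mapsto\|u\|_{S(I)}$ is finite for compact $I$, because $u\in S(I)$ by Definition \ref{def gwp}. It is also continuous in the endpoints and tends to $0$ as $|I|\to0$; both properties follow from the finiteness of the exponents $q_{s_c},q_0<\infty$ and dominated convergence. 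A standard bootstrap then applies: if $\eta$ is small enough that $C(2\eta)^{k}<\frac12$, then $\|u\|_{S(I)}\le2\eta$ for every such $I$. Letting $I\uparrow\R$ gives $\|u\|_{S(\R)}\le 2\|V_\alpha(t)u_0\|_{S(\R)}$.

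\emph{Step 3 ($X$ bound and scattering).} Apply \eqref{strichartz_X}, \eqref{inhomogeneous_strichartz_X}, and the first nonlinear estimate with $f=g=u$:
\[
\|u\|_{X(I)}\lesssim\|u_0\|_{H^{\frac\alpha2}}+\|u\|_{S(I)}^k\|u\|_{X(I)}.
\]
Since $\|u\|_{S(I)}\le2\eta$ is small, the last term can be absorbed into the left-hand side. The result is $\|u\|_{X(\R)}\lesssim\|u_0\|_{H^{\frac\alpha2}}$, uniformly in $I$; this bound can also be read off from Theorem \ref{prop:scattering_criterion}. Finally, because $\|u\|_{S(\R_\pm)}<\infty$, Theorem \ref{prop:scattering_criterion} and its negative-time analogue give scattering in both time directions.

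\emph{Main obstacle.} The only delicate point is justifying the continuity of $\|u\|_{S(I)}$ in $I$, which the bootstrap in Step 2 needs. This uses the local-in-time regularity $u\in S(I)$ from the well-posedness theory together with the finiteness of the time exponents, which makes the $L^{q}_t$ norms absolutely continuous in the interval. The rest of the argument is routine.
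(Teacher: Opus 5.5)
Your proof is correct, though it takes a different route from the one the statement comes from. The paper gives no argument here. It quotes the result from \cite[Corollary 2.6]{CamposLinaresSantos2024} as a consequence of the well-posedness theory of Section~\ref{sec-3}. In that framework the natural proof is a fixed-point argument. One runs the contraction for the Duhamel map directly in the ball of radius $2\|V_\alpha(t)u_0\|_{S(\R)}$ in $S(\R)$; this uses only smallness of the linear evolution, so it also gives the $\dot H^{s_c}$ small-data theory. The $X$ bound then follows by persistence of regularity, and the fixed point is identified with the solution of Theorem~\ref{Teorema existencia critica} through uniqueness.

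You announce a ``contraction argument'', but what you actually carry out is an a priori continuity (bootstrap) argument on the already global solution. You use only that $u$ satisfies the Duhamel formula and lies in $X(I)\cap S(I)$ for every compact $I$ (Definition~\ref{def gwp}). You add the continuity of $T\mapsto\|u\|_{S([-T,T])}$ and its vanishing as $T\to0$, which you justify correctly via finite exponents and dominated convergence. Since $2\eta-C(2\eta)^{k+1}>\eta$, the value $2\eta$ is never reached. The $X$ bound then follows by absorption, because $\|u\|_{X(I)}<\infty$ on compact intervals.

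Your route dispenses with the fixed-point machinery and with any uniqueness statement. The price is that it leans on the global $H^{\frac\alpha2}$ theory, and hence on the defocusing energy coercivity. The contraction route is self-contained and works at critical regularity.

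One minor point: like the paper in the proof of Proposition~\ref{prop:perturbation}, you apply \eqref{inhomogeneous_strichartz_S} and \eqref{inhomogeneous_strichartz_X} to the retarded integral $\int_0^t$. That is the form actually needed, rather than the $\int_I$ form displayed in the lemma.
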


We also have asymptotic completeness.

\begin{prop}[Wave operators {\cite[Proposition 3.2]{CamposLinaresSantos2024}}]\label{prop:wave_operator}
Let $v \in H^{\frac\alpha2}(\R)$. There exist global solutions $u_\pm $ to \eqref{DGBO} such that
\begin{equation}
\|u_\pm(t)- V_\alpha(t)v\|_{H^{\frac\alpha2}} \to 0 \quad\text{as}\quad t \to \pm\infty.
\end{equation}
\end{prop}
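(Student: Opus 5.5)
We prove Proposition \ref{prop:wave_operator} by solving the final-state problem as a fixed point of an integral equation posed near $t=\pm\infty$, and then extending the resulting solution to all times. We treat $t\to+\infty$; the case $t\to-\infty$ is symmetric. The Duhamel formulation with data at infinity is
\begin{equation*}
u(t) = V_\alpha(t)v - \int_t^{\infty} V_\alpha(t-\tau)\,\partial_x(u^{k+1})(\tau)\,d\tau,
\end{equation*}
to be solved on an interval $I_T=[T,\infty)$ with $T$ large.

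\textbf{Step 1: smallness of the free evolution.} By \eqref{strichartz_X} and \eqref{strichartz_S}, together with the embedding $H^{\frac\alpha2}\hookrightarrow\dot H^{s_c}$ (recall $0<s_c<\frac\alpha2$), we have $\|V_\alpha(t)v\|_{S(\R)}+\|V_\alpha(t)v\|_{X(\R)}\lesssim\|v\|_{H^{\frac\alpha2}}$. The $S$-norm is built from mixed norms with finite time exponents $q_{s_c},q_0<\infty$. Dominated convergence therefore gives
\begin{equation*}
\eta(T):=\|V_\alpha(t)v\|_{S([T,\infty))}\to0 \quad\text{as } T\to\infty.
\end{equation*}
Here one first passes pointwise in $x$ inside the inner $L^q_t$ norm, and then in the outer $L^p_x$ norm, using the global bound as majorant. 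No smallness of $\|v\|_{H^{\frac\alpha2}}$ is needed, only smallness of this tail.

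\textbf{Step 2: contraction.} Define $\Phi(u)$ as the right-hand side above and work in the set
\[
B=\{u:\ \|u\|_{S(I_T)}\le2\eta(T),\ \|u\|_{X(I_T)}\le 2C\|v\|_{H^{\frac\alpha2}}\},
\]
equipped with the metric $d(u,w)=\|u-w\|_{S(I_T)}$; this metric makes $B$ complete, by lower semicontinuity of the $X$-norm under weak limits. The inhomogeneous estimates \eqref{inhomogeneous_strichartz_X} and \eqref{inhomogeneous_strichartz_S} hold for the backward integral $\int_t^\infty$ on $I_T$ exactly as for $\int_0^t$. Combined with the nonlinear estimates of Proposition 3.1 of \cite{CamposLinaresSantos2024} applied to $u^{k+1}=u^k\cdot u$, they give
\[
\|\Phi u\|_{S}\le \eta+C(2\eta)^{k+1},\qquad \|\Phi u\|_X\le C\|v\|_{H^{\frac\alpha2}}+C\eta^k\|v\|_{H^{\frac\alpha2}}.
\]
For differences we write $u^{k+1}-w^{k+1}=(u-w)\sum_j u^jw^{k-j}$ and use the $\dot N^{s_c}$ estimate, obtaining $\|\Phi u-\Phi w\|_S\lesssim\eta^k\|u-w\|_S$. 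For $T$ large, $\Phi$ is thus a contraction on $B$, and its fixed point $u_+$ solves \eqref{DGBO} on $I_T$.

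\textbf{Step 3: convergence and global extension.} From the fixed-point equation,
\[
\|u_+(t)-V_\alpha(t)v\|_{H^{\frac\alpha2}}\le\Bigl\|\int_t^\infty V_\alpha(t-\tau)\partial_x(u_+^{k+1})\,d\tau\Bigr\|_{X([t,\infty))}\lesssim \|u_+\|_{S([t,\infty))}^k\|u_+\|_{X([t,\infty))},
\]
which tends to $0$ as $t\to\infty$ by dominated convergence again. Now $u_+(T)\in H^{\frac\alpha2}$, and Theorem \ref{Teorema existencia critica} provides a unique global solution with this data. By uniqueness it coincides with $u_+$ on $I_T$, so $u_+$ is a global solution with the desired asymptotics.

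\textbf{Main obstacle.} The delicate point is the nonlinear estimate in the inhomogeneous $N^{\frac\alpha2}$ norm: the Kato-type smoothing must absorb the derivative in $\partial_x(u^{k+1})$ on half-infinite intervals with the integral taken from infinity. In this setting it reduces to the cited Strichartz and nonlinear lemmas. A secondary technical point is justifying the backward Duhamel integral on unbounded intervals, which follows by duality, or by a Christ–Kiselev argument applied to the retarded estimates.
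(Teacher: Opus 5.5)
Your proposal is correct. The paper gives no proof of this statement and simply cites \cite[Proposition 3.2]{CamposLinaresSantos2024}; what you wrote is the standard final-state argument behind it: tail smallness of $\|V_\alpha(t)v\|_{S([T,\infty))}$ by dominated convergence, a contraction in the $S$-metric on a set carrying an $X$-bound via the cited Strichartz and nonlinear estimates, and extension to all times by Theorem \ref{Teorema existencia critica} and uniqueness. The only slip is a harmless sign: for \eqref{DGBO} as written the final-state equation is $u(t)=V_\alpha(t)v+\int_t^\infty V_\alpha(t-\tau)\partial_x(u^{k+1})(\tau)\,d\tau$ (your minus sign mirrors the same sign typo in \eqref{eq:duhamel}), and this does not affect any of the estimates.
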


This concludes the overview of the well-posedness
theory for \eqref{DGBO}. In the next sections, we make use of these results to
implement the perturbative and compactness tools needed for the proof of our
main results.

\section{The minimal counterexample}\label{sec-6}


The main goal of this section is to prove the following result.

\begin{prop}\label{prop:nontrivial_localization}

Suppose that the conclusion of Theorem \ref{teo:scat} does not hold. Then, there exists a solution $u_c$ to \eqref{DGBO}, a function $x: \R_+ \to \R$ and $\tilde R(u_c)>0$ independent of time such that 
	
	\begin{equation}\label{eq:nontrivial_localization}
		\inf_{t \geq 0}\; \int_{|x-x(t)|\leq \tilde R(u_c)} |u_c(t,x)|^{k+2} \, dx >0.
	\end{equation}
	
 \end{prop}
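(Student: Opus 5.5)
We argue by the Kenig--Merle concentration-compactness scheme, in four steps.

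\textbf{Step 1: set-up of the minimal threshold.} For $E>0$ set
\[
L(E):=\sup\Bigl\{\|u\|_{S(\R_+)} : u \text{ solves \eqref{DGBO}},\ E[u_0]+M[u_0]\le E\Bigr\}.
\]
By Theorem \ref{prop:scattering_criterion}, failure of Theorem \ref{teo:scat} (say forward in time; backward is symmetric) means $E_c:=\sup\{E: L(E)<\infty\}<\infty$. Since $k$ is even, the energy is coercive, so $E[u_0]+M[u_0]\sim\|u_0\|_{H^{\alpha/2}}^2$ up to the $L^{k+2}$ term. Corollary \ref{cor:critical_small_data} then gives $E_c>0$.

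\textbf{Step 2: profile decomposition for $V_\alpha$.} Take a sequence $u_{n}$ with $E[u_{n,0}]+M[u_{n,0}]\to E_c$ and $\|u_n\|_{S(\R_+)}\to\infty$. We need a linear profile decomposition in $H^{\alpha/2}$ for $V_\alpha$:
\[
u_{n,0}=\sum_{j\le J}V_\alpha(-t_n^j)\phi^j(\cdot-x_n^j)+w_n^J.
\]
Because $H^{\alpha/2}$ is strictly above the critical regularity and mass is conserved, no scaling parameter is needed. The required properties are:
\begin{itemize}
\item pairwise orthogonality $|t_n^j-t_n^i|+|x_n^j-x_n^i|\to\infty$;
\item Pythagorean expansions of $M$ and $E$;
\item the remainder bound $\limsup_J\limsup_n\|V_\alpha(t)w_n^J\|_{S(\R)}=0$.
\end{itemize}
The last property is obtained by interpolating $S$ between a Strichartz norm, bounded via \eqref{strichartz_S} and the embedding $H^{\alpha/2}\subset \dot H^{s_c}$ (valid since $s_c<\alpha/2$), and an $L^\infty_{t,x}$-type norm. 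The latter is controlled by extracting profiles through a refined Strichartz / Bernstein argument at bounded frequencies. Energy decoupling of the $L^{k+2}$ term uses dispersive decay when $|t_n^j|\to\infty$.

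For each profile we build a nonlinear profile. If $t_n^j$ is bounded we solve from the data. If $t_n^j\to\pm\infty$ we use Proposition \ref{prop:wave_operator}.

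\textbf{Step 3: extraction and compactness of the critical solution.} We need a long-time perturbation lemma in $S$, derived from the Strichartz and nonlinear estimates of Section \ref{sec-2} by the standard bootstrap. If two or more profiles were nontrivial, or a single profile had energy strictly below $E_c$, then each nonlinear profile would have finite $S$-norm. Perturbation theory would then bound $\|u_n\|_{S}$, a contradiction. Hence $J=1$ and $w_n\to0$ in $H^{\alpha/2}$. The case $t_n\to-\infty$ is excluded because the linear evolution would then have small $S$-norm on $\R_+$. The case $t_n\to+\infty$ is excluded symmetrically, using that $V_\alpha(t)u_{n,0}$ has small $S(\R_+)$ norm.

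This produces a critical solution $u_c$ with $\|u_c\|_{S(\R_+)}=\infty$. Applying the same argument to the sequences $u_c(t_n)$ yields $x(t)$ such that
\[
K=\{u_c(t,\cdot+x(t)):t\ge0\}
\]
is precompact in $H^{\alpha/2}$.

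\textbf{Step 4: conclusion.} By precompactness, for every $\varepsilon>0$ there is $R$ with $\int_{|x|>R}|D^{\alpha/2}\text{-part}|^2+u^2<\varepsilon$ uniformly on $K$. The map $f\mapsto\int|f|^{k+2}$ is continuous on $H^{\alpha/2}$ by Sobolev embedding. Suppose the infimum in \eqref{eq:nontrivial_localization} were zero for every $R$. Take $t_n$ with $\int_{|x-x(t_n)|\le R}|u_c|^{k+2}\to0$ and use the tail bound to get $\|u_c(t_n)\|_{L^{k+2}}\to0$. Passing to a limit $v\in\overline K$ gives $\|v\|_{L^{k+2}}=0$, so $v=0$. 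But mass is conserved and $K$ is precompact in $L^2$, so $\|v\|_{L^2}^2=M[u_c]>0$, a contradiction. Hence some $\tilde R(u_c)$ works.

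The main obstacle is Step 2, specifically the inverse Strichartz estimate for the $S$-norm with the maximal-type $L^{p}_xL^q_t$ structure of the fractional group. This requires a refined Strichartz estimate through a Littlewood--Paley square function, and possibly Galilean-free frequency localization arguments. We would attempt it via a bilinear/Whitney-decomposition refinement, which is known for Airy-type groups.
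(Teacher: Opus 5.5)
Step 3 contains a false claim. With your convention $u_{n,0}=V_\alpha(-t_n)\phi(\cdot-x_n)+w_n$, the free evolution of the profile is $V_\alpha(t-t_n)\phi$. If $t_n\to+\infty$, this is small in $S$ only on $(-\infty,0]$, while on $\R_+$ its $S$-norm tends to $\|V_\alpha(t)\phi\|_{S(\R)}>0$. So this case is not excluded ``symmetrically'': your sequence $u_n$ blows up only forward, and nothing rules this configuration out.

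The standard repair is not to exclude it. In that case, let $u_c$ be the nonlinear profile from Proposition \ref{prop:wave_operator} that scatters as $t\to-\infty$. It has $M+E=E_c$, and $\|u_c\|_{S(\R_+)}=\infty$, since otherwise perturbation theory would bound $\|u_n\|_{S(\R_+)}$. Alternatively, define the threshold with $S(\R)$ and reverse time at the end.

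When you then apply ``the same argument'' to $u_c(t_n)$, you must say why both directions are excluded. For bounded $t_n$, continuity of the flow already gives convergence. For $t_n\to+\infty$, the bound $\|u_c\|_{S([0,t_n])}\to\infty$ gives blowup backward from time $t_n$. That is what rules out a profile whose free evolution is small on $(-\infty,0]$.

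The paper's Lemma \ref{lem:one_profile} treats the corresponding case ($\tau_n\to-\infty$) just as briefly. Smallness of the free evolution on $(-\infty,-\tau_n/2)$ does not by itself control $u_n$ beyond that time. So both of the lemma's applications (building $u_c$, and the final contradiction) really rest on the two observations above.

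Apart from this, your architecture matches the paper's: a threshold ($A_c$ there), a profile decomposition without scaling parameters, long-time perturbation, and extraction of a single profile. The ``main obstacle'' you flag does not arise. Since the data lie in the inhomogeneous space $H^{\frac{\alpha}{2}}$, strictly above $\dot H^{s_c}$, no refined or bilinear Strichartz estimate is needed. Lemma \ref{lem:inverse_strichartz} is exactly the elementary argument of your Step 2: Bernstein makes the high frequencies small, and H\"older interpolation against $L^\infty_{t,x}$ handles the low ones.

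Your Step 4 is a legitimate variant of the paper's ending. You first establish precompactness of the whole forward orbit modulo $x(t)$. You then conclude from $0\notin\overline K$ (by mass conservation) together with $\sup_{f\in K}\int_{|x|>R}|f|^{k+2}\to0$, which follows from precompactness in $L^{k+2}$; no tail of a ``$D^{\frac{\alpha}{2}}$-part'' is needed.

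The paper never constructs $x(t)$ for all times. It negates the statement directly, obtaining times $t_n$ and radii $R_n\to\infty$ along which the $L^{k+2}$ mass vanishes on every ball of radius $R_n$. It then applies the extraction lemma once to $u_c(t_n)$ and gets a nonzero limit modulo translations, contradicting the vanishing. Your route gives a stronger intermediate statement, the paper's is shorter, and both rest on the same extraction lemma.
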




The proof of this result remounts to the work of Kenig and Merle for the Schr\"odinger 
and wave equations, and has since been adapted to a variety of dispersive models. 
It relies crucially on four fundamental pillars: small-data scattering, long-time perturbation theory, a linear profile decomposition, and asymptotic completeness. 

    The first of these implies the existence of a positive threshold  below which scattering must occur, while allowing for the possibility of nonscattering solutions for larger data. Relying on the perturbation theory, we construct a sequence of solutions whose initial data approaches this threshold, while asymptotically ceasing to enjoy finiteness of space-time norms. Such a sequence, however, has no \textit{a priori} obligation to converge, not even up to subsequences or symmetries. 
    Still, we yearn for compactness.
    By decomposing the initial data into concentration bubbles via the profile decomposition, we employ asymptotic completeness to pass to nonlinear profiles. The minimality of the threshold begets \textit{a fortiori} coalescence, since any positive amount of separation would lead to scattering. We can then safely pass to a convergent subsequence and prove the desired localization.


\begin{teo}[Linear profile decomposition in $H^{\frac{\alpha}{2}}$]
\label{thm:main}
Let $(u_n)_{n \geq 1}$ be a bounded sequence in $H^{\frac{\alpha}{2}}(\R)$. 
Then, after passing to a subsequence, there exist:
\begin{enumerate}
    \item A sequence of \emph{profiles} $(\phi^j)_{j \geq 1} \subset H^{\frac{\alpha}{2}}(\R)$,
    \item Sequences of \emph{time parameters} $(\tau_n^j)_{n \geq 1} \subset \R$ and 
          \emph{translation parameters} $(x_n^j)_{n \geq 1} \subset \R$ for each $j \geq 1$,
\end{enumerate}
such that for every integer $J \geq 1$, we have the decomposition
\begin{equation}
    u_n(x) = \sum_{j=1}^{J} V_{\alpha}(\tau_n^j) \phi^j(x - x_n^j) + W_n^J(x),
    \label{eq:decomp}
\end{equation}
with the following properties:

\begin{enumerate}
    \item {Asymptotic orthogonality of parameters:} For any $j \neq k$,
    \begin{equation}
        \lim_{n \to \infty} \left( |\tau_n^j - \tau_n^k| + |x_n^j - x_n^k| \right) = +\infty.
        \label{eq:orthog_params}
    \end{equation}
    
    \item {Asymptotic Pythagorean expansion:} For any fixed $J$, as $n \to \infty$,
    \begin{align}
        \|u_n\|_{L^2}^2 &= \sum_{j=1}^J \|\phi^j\|_{L^2}^2 + \|W_n^J\|_{L^2}^2 + o_n(1), \label{eq:L2_expansion} \\
        \|u_n\|_{\dot{H}^{\frac{\alpha}{2}}}^2 &= \sum_{j=1}^J \|\phi^j\|_{\dot{H}^{\frac{\alpha}{2}}}^2 + \|W_n^J\|_{\dot{H}^{\frac{\alpha}{2}}}^2 + o_n(1), \label{eq:Hs_expansion}\\
        E[u_n] &= \sum_{j=1}^J E[V_{\alpha}(\tau_n^j) \phi^j]+E[W^J_n] + o_n(1).\label{eq:energy_expansion}
    \end{align}
    where $o_n(1) \to 0$ as $n \to \infty$.
    
    \item {Smallness of the remainder in Strichartz norms:}
    \begin{equation}
        \lim_{J \to \infty} \left( \limsup_{n \to \infty} \|V_{\alpha}(t) W_n^J\|_{S(\R)} \right) = 0.
        \label{eq:smallness}
    \end{equation}
    
    \item {Weak convergence of profiles:} For each $k \geq 1$,
    \begin{equation}
        V_{\alpha}(\tau_n^k) u_n(\cdot + x_n^k) \weakto \phi^k \quad \text{weakly in } H^{\frac{\alpha}{2}}(\R).
        \label{eq:weak_conv}
    \end{equation}
    
\end{enumerate}
\end{teo}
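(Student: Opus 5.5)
The decomposition will be built by the standard inductive extraction procedure, driven by a refined Strichartz/inverse estimate. Set $W_n^0:=u_n$ and $A:=\sup_n\|u_n\|_{H^{\frac\alpha2}}$. At step $J$, given $W_n^{J-1}$ bounded in $H^{\frac\alpha2}$, we define
\[
\mathcal V(W^{J-1}):=\big\{\phi:\ \exists (\tau_n,x_n),\ V_\alpha(\tau_n)W_n^{J-1}(\cdot+x_n)\weakto\phi \text{ weakly in } H^{\frac\alpha2} \text{ along a subsequence}\big\}
\]
and $\eta(W^{J-1}):=\sup\{\|\phi\|_{H^{\frac\alpha2}}:\phi\in\mathcal V(W^{J-1})\}$. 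We choose $\phi^J\in\mathcal V$ with $\|\phi^J\|\ge\frac12\eta$, together with the corresponding parameters $(\tau_n^J,x_n^J)$, and set $W_n^J:=W_n^{J-1}-V_\alpha(-\tau_n^J)\phi^J(\cdot-x_n^J)$. (We adopt the sign convention of \eqref{eq:decomp}, replacing $\tau_n^j$ by $-\tau_n^j$ where convenient.) The weak limit property \eqref{eq:weak_conv} holds by construction. The $L^2$ and $\dot H^{\frac\alpha2}$ Pythagorean expansions \eqref{eq:L2_expansion}--\eqref{eq:Hs_expansion} follow from expanding the square, using that $V_\alpha$ is unitary and commutes with translations and $D^{\frac\alpha2}$, and using $V_\alpha(\tau_n^J)W_n^J(\cdot+x_n^J)\weakto0$. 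Orthogonality \eqref{eq:orthog_params} is proved by the usual contradiction: if $(\tau_n^j-\tau_n^k,x_n^j-x_n^k)$ stays bounded for some $j<k$, we pass to a convergent subsequence. Since $V_\alpha(t)$ is strongly continuous and translations are continuous on $H^{\frac\alpha2}$, the weak limit $\phi^k$ would then be the weak limit of a bounded-parameter shift of $W_n^{k-1}$, which in turn converges weakly to $0$ because of the $j$-th extraction. This forces $\phi^k=0$, a contradiction. Summability $\sum_j\|\phi^j\|_{H^{\frac\alpha2}}^2\le A^2$ then gives $\eta(W^J)\to0$ as $J\to\infty$.

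The key analytic step, and the main obstacle, is the inverse Strichartz estimate linking $\eta$ to the $S$-norm. Concretely, we aim to show that
\[
\limsup_n\|V_\alpha(t)W_n\|_{S(\R)}\lesssim A^{1-\theta}\eta(W)^{\theta}
\]
for some $\theta\in(0,1)$. We first reduce the second component of $S$, namely $\dot H^{s_c+\frac12,p_0}_xL^{q_0}_t$, to the first: by interpolation between the endpoint $H^{\frac{\alpha+1}2,p_0}_xL^{q_0}_t$ bound from \eqref{strichartz_X} (which corresponds to regularity $\frac\alpha2$ and holds uniformly) and a lower regularity estimate, it is enough to get smallness of some non-endpoint norm. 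For $L^{p_{s_c}}_xL^{q_{s_c}}_t$, we use the fact that $s_c<\frac\alpha2$ leaves room to spare. Splitting $W_n=P_{\le N}W_n+P_{>N}W_n$, the high-frequency part is bounded by $N^{-(\frac\alpha2-s_c)}A$ through \eqref{strichartz_S} and Bernstein. Frequencies $\le N^{-1}$ are controlled in the same way, using the $L^2$ bound, since $s_c>0$. For the remaining band $N^{-1}\le|\xi|\le N$, we interpolate the $L^{p_{s_c}}_xL^{q_{s_c}}_t$ norm between a Strichartz norm bounded by $C(N)A$ and the $L^\infty_{t,x}$ norm of $V_\alpha(t)P_{N^{-1}<\cdot\le N}W_n$. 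Here we need $p_{s_c},q_{s_c}>2$ so that a genuine $L^\infty$ factor appears; this holds when $k\ge4$ and $\alpha>1$. Choosing $(t_n,x_n)$ that nearly attain $\|V_\alpha(t)P_{\text{band}}W_n\|_{L^\infty_{t,x}}$, we write
\[
V_\alpha(t_n)P_{\text{band}}W_n(x_n)=\langle V_\alpha(t_n)W_n(\cdot+x_n),\check\psi_N\rangle .
\]
Since $\check\psi_N\in H^{-\frac\alpha2}$, this pairing tends to $\langle\phi,\check\psi_N\rangle$ for some weak limit $\phi\in\mathcal V$, and it is therefore bounded by $C(N)\eta$. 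Optimizing in $N$ yields the estimate. The delicate point is making this interpolation rigorous for mixed $L^p_xL^q_t$ norms with $p\ne q$: we would interpolate against the admissible Strichartz/maximal function estimates from \cite{CamposLinaresSantos2024} and bound the $L^\infty$ factor by its spacetime supremum. Combined with $\eta(W^J)\to0$, this gives \eqref{eq:smallness}.

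Finally, for the energy expansion \eqref{eq:energy_expansion}, the kinetic part is already covered by \eqref{eq:Hs_expansion}, so it remains to decouple the potential term $\int u_n^{k+2}$. For fixed $J$, orthogonality implies that for $j\ne k$ either $|x_n^j-x_n^k|\to\infty$ with the times comparable, in which case the profiles separate in space, or $|\tau_n^j-\tau_n^k|\to\infty$, in which case one of the pair disperses in $L^{k+2}$. Both mechanisms come from approximating $\phi^j$ by Schwartz functions, combined with the dispersive decay of $V_\alpha$ and the Sobolev embedding $H^{\frac\alpha2}\subset L^{k+2}$. Together these give
\[
\Big\|\sum_j V_\alpha(\tau_n^j)\phi^j(\cdot-x_n^j)+W_n^J\Big\|_{L^{k+2}}^{k+2}=\sum_j\|V_\alpha(\tau_n^j)\phi^j\|_{L^{k+2}}^{k+2}+\|W_n^J\|_{L^{k+2}}^{k+2}+o_n(1).
\]
The cross terms involving $W_n^J$ are the ones requiring care. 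We handle them with the weak convergence $V_\alpha(\tau_n^j)W_n^J(\cdot+x_n^j)\weakto0$ together with Rellich compactness on bounded sets, which gives local strong $L^{k+2}$ convergence to zero. In the case $|\tau_n^j|\to\infty$, the profile term itself tends to $0$ in $L^{k+2}$, so no such argument is needed there.
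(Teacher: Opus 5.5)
Your architecture is the paper's: iterative extraction, Pythagorean expansions from $V_\alpha(\tau_n^J)W_n^J(\cdot+x_n^J)\weakto0$, and an inverse Strichartz step via frequency splitting, interpolation against $\|V_\alpha(t)P_{\mathrm{band}}W_n\|_{L^\infty_{t,x}}$, and pairing with the Littlewood--Paley kernel at a near-maximizing point. Your Bahouri--G\'erard bookkeeping with $\eta$, in place of the paper's $\delta_j$, is an inessential variation.

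\textbf{The gap.} The inverse estimate $\limsup_n\|V_\alpha(t)W_n\|_{S(\R)}\lesssim A^{1-\theta}\eta(W)^\theta$ is false. To extract a positive power of $L^\infty_{t,x}$ from an $L^p_xL^q_t$ norm you need $\|V_\alpha(t)P_{\mathrm{band}}f\|_{L^{\theta p}_xL^{\theta q}_t}\le C(N)\|f\|_{L^2}$ for some $\theta<1$. The relevant condition for this is not $p_{s_c},q_{s_c}>2$ but a Knapp condition.

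Take $\widehat{f_\delta}(\xi)=\delta^{-1/2}\chi\big(\frac{\xi-1}{\delta}\big)$ with $0\le\chi\in C_c^\infty(\R)$, $\chi\not\equiv0$. Then $\|f_\delta\|_{H^{\alpha/2}}\sim1$. For Schwartz $g$ one has $|\inn{V_\alpha(\tau_n)f_{\delta_n}(\cdot+x_n)}{g}|\lesssim\delta_n^{1/2}\|\widehat g\|_{L^\infty}$, so every weak limit modulo time and space translations vanishes, i.e.\ $\eta=0$. Yet Taylor expanding $|\xi|^\alpha\xi$ at $\xi=1$ gives $|D^\sigma V_\alpha(t)f_\delta(x)|\gtrsim\delta^{1/2}$ on $\{|t|\le c\delta^{-2},\ |x-(\alpha+1)t|\le c\delta^{-1}\}$. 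Hence $\|D^\sigma V_\alpha(t)f_\delta\|_{L^p_xL^q_t}\gtrsim\delta^{\frac12-\frac1q-\frac2p}$, so a frequency-localized bound in $L^{\theta p}_xL^{\theta q}_t$ with $\theta<1$ exists only if $\frac2p+\frac1q<\frac12$.

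\begin{itemize}
\item Since $\frac2{p_0}+\frac1{q_0}=\frac12$, the $\dot H^{s_c+\frac12,p_0}_xL^{q_0}_t$ component of $S$ stays $\gtrsim1$ for every $k$.
\item Since $\frac2{p_{s_c}}+\frac1{q_{s_c}}=\frac2k$, the $L^{p_{s_c}}_xL^{q_{s_c}}_t$ component also stays $\gtrsim1$ when $k=4$.
\item Your reduction of the second component to ``some non-endpoint norm'' by interpolating in the derivative index cannot help, because it never moves $(p_0,q_0)$ off this sharp line.
\end{itemize}

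In fact, for $u_n=f_{\delta_n}$, property (4) forces every $\phi^j=0$. Then $W_n^J=u_n$, and property (3) fails. So the statement with the full $S$-norm cannot be proved by any argument. The paper's proof of Lemma \ref{lem:inverse_strichartz} has the same defect: it interpolates through an $L^{6/\eta}_{t,x}$ Strichartz bound with $\eta>1$, i.e.\ a diagonal exponent below $6$, which this packet rules out.

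\textbf{What survives.} For $k\ge6$ your argument proves smallness in the first component alone. There $\frac2{p_{s_c}}+\frac1{q_{s_c}}<\frac12$, and H\"older with $\theta=\frac4k$ gives $\|F\|_{L^{p_{s_c}}_xL^{q_{s_c}}_t}\le\|F\|_{L^{p_4}_xL^{q_4}_t}^{4/k}\|F\|_{L^\infty_{t,x}}^{1-4/k}$, where $(p_4,q_4)=\big(\frac{4(2\alpha+1)}{\alpha+2},\frac{2(2\alpha+1)}{\alpha-1}\big)$ is the $k=4$ pair. Estimate \eqref{strichartz_S} with $k=4$ bounds the first factor by $N^{\frac{2-\alpha}{4}}A$ on the band $N^{-1}<|\xi|\le N$. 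Your high- and low-frequency bounds (using $s_c\ge\frac16$) and the pairing argument then close. The correct conclusion is therefore $\lim_J\limsup_n\|V_\alpha(t)W_n^J\|_{L^{p_{s_c}}_xL^{q_{s_c}}_t}=0$ for $k\ge6$. The perturbation theory must then be reorganized so that only this norm of $V_\alpha(t)(u_0-\tilde u(0))$ is required to be small, with the derivative component merely bounded. The case $k=4$ is out of reach without a different critical norm or additional frequency parameters.

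\textbf{The rest of the proposal.} Your energy decoupling is correct and more complete than the paper's sketch. It uses spatial separation, dispersion of profiles with $|\tau_n^j|\to\infty$, and Rellich compactness for the cross terms with $W_n^J$. Your orthogonality argument also works, once you make explicit the induction showing $V_\alpha(\tau_n^j)W_n^{k-1}(\cdot+x_n^j)\weakto0$. That induction uses orthogonality of $j$ with the intermediate indices $j<l<k$, not just the $j$-th extraction.
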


The main tool used to prove the linear profile decomposition above is a reverse inequality.

\begin{lem}[Reverse Strichartz inequality]
\label{lem:inverse_strichartz}
Let $(v_n)$ be a sequence in $H^{\frac{\alpha}{2}}(\R)$ and let $A>0$ such that
\begin{equation}
    \sup_{n } \| v_n\|_{H^{\frac\alpha2}} \lesssim_A 1
\end{equation}
and
\begin{equation}
    \liminf_{n \to \infty} \|V_{\alpha}(t) v_n\|_{S(\R)} \geq \delta > 0.
\end{equation}
Then there exist sequences $(\tau_n)$, $(x_n) \subset \R$,
$\phi \in H^{\frac\alpha2}(\R)$, and $\eta>1$ with $\|\phi\|_{H^{\frac{\alpha}{2}}} \gtrsim_{A}\delta^{\frac{\alpha (2\alpha+1)\eta}{3(\alpha-1)(\alpha-2s_c)(\eta-1)}}$, such that up to a subsequence,
\begin{equation}
    V_{\alpha}(\tau_n) v_n(\cdot + x_n) \weakto \phi \quad \text{weakly in } H^{\frac\alpha2}(\R).
\end{equation}
\end{lem}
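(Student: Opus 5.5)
We will run the standard inverse-Strichartz argument: reduce to a single frequency scale, bound the scattering norm by a power of the norm of a frequency-localized piece, and then find a point of concentration of that piece. Throughout, write $\|\cdot\|_{S(\R)}$ for the scattering norm; recall it is the intersection of $L^{p_{s_c}}_xL^{q_{s_c}}_t$ and $\dot H^{s_c+\frac12,p_0}_xL^{q_0}_t$.

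\emph{Step 1: reduction to one dyadic block.} First we reduce to a single frequency scale. Since $s_c<\frac{\alpha}{2}$, frequencies $N\gg1$ are controlled by the $H^{\frac\alpha2}$ bound via Bernstein and the Strichartz estimate \eqref{strichartz_S}:
\[
\|V_\alpha(t)P_{>N}v_n\|_{S}\lesssim \|P_{>N}v_n\|_{\dot H^{s_c}}\lesssim N^{s_c-\frac\alpha2}A .
\]
For low frequencies we interpolate. We write each component of $S$ as an interpolate between the Strichartz/local-smoothing bound at a slightly different regularity (which gains a positive power of $N$ for $N\ll1$, using the $L^2$ bound) and an endpoint norm of the form $\|D^{\theta}V_\alpha(t)P_Nv_n\|_{L^\infty_{x}L^\infty_t}$. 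Summing over dyadic blocks then gives, for some $\eta>1$ coming from Hölder/interpolation,
\[
\delta\lesssim \|V_\alpha(t)v_n\|_{S}\lesssim_A \sup_{N}\,\bigl(N^{-s_c}\|V_\alpha(t)P_Nv_n\|_{L^\infty_{t,x}}\bigr)^{1-\frac1\eta}.
\]
Combined with the high- and low-frequency cutoffs, this localizes $N$ to a compact range $N\sim N_n\in[c(\delta),C(\delta)]$. The explicit power of $\delta$ in the statement comes from tracking these exponents: the local-smoothing exponent $\frac{3(\alpha-1)}{2\alpha+1}$ appearing in $q_0$, and the gap $\frac\alpha2-s_c$.

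\emph{Step 2: locating the concentration point.} With $N$ fixed as above, pick $(\tau_n,x_n)$ with
\[
|V_\alpha(\tau_n)P_{N}v_n(x_n)|\gtrsim \delta^{C}N^{s_c}.
\]
After passing to a subsequence (possible because $N$ ranges over a finite set of dyadics), $V_\alpha(\tau_n)v_n(\cdot+x_n)$ is bounded in $H^{\frac\alpha2}$, hence converges weakly to some $\phi$. Testing against $\psi:=\check{\chi}_N$, the kernel of $P_N$, which lies in $H^{-\frac\alpha2}$ with norm $\sim N^{\frac12-\frac\alpha2}$, gives
\[
|\langle\phi,\psi\rangle|=\lim_n|P_NV_\alpha(\tau_n)v_n(x_n)|\gtrsim\delta^{C}N^{s_c}.
\]
Therefore $\|\phi\|_{H^{\frac\alpha2}}\gtrsim \delta^C N^{s_c+\frac\alpha2-\frac12}$. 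Because $N\in[c(\delta),C(\delta)]$, this yields $\|\phi\|_{H^{\frac\alpha2}}\gtrsim_A\delta^{\text{power}}$, with the power obtained from the interpolation bookkeeping in Step 1.

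\emph{Main obstacle.} The hard part is Step 1, the refined Strichartz inequality that bounds $S$ by the $\ell^\infty$-type dyadic quantity. The $S$-norm is a mixed $L^p_xL^q_t$ norm, with space taken first, so orthogonality and square-function arguments must be run in $x$ while taking $L^q_t$ inside. I would first try to avoid Littlewood--Paley square functions in mixed norms. The plan is to interpolate each dyadic piece between the local smoothing estimate $\|D^{\frac{\alpha}{2}}V_\alpha f\|_{L^\infty_xL^2_t}\lesssim\|f\|_{L^2}$, the maximal function estimate $L^{4}_xL^\infty_t$, and $L^\infty_{t,x}$, and then sum the pieces using the exponents' slack, which is where $\eta>1$ enters. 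If this summation fails for $\alpha$ near $1$, the fallback is a vector-valued Littlewood--Paley inequality in $L^p_x(L^q_t)$.
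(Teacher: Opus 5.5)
There is a genuine gap in Step 1, and it cannot be closed. The refined inequality you aim for is false for the component $\dot H^{s_c+\frac12,p_0}_xL^{q_0}_t$ of $S$ (for every $k$). It is also false for $L^{p_{s_c}}_xL^{q_{s_c}}_t$ when $k=4$.

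In the $(\frac1p,\frac1q)$-plane of the norms $L^p_xL^q_t$, the endpoint estimates for $V_\alpha$ with $L^2$-based data all lie on the line $\frac4p+\frac2q=1$. These are local smoothing at $(0,\frac12)$, the maximal estimate at $(\frac14,0)$, and $L^6_{t,x}$ Strichartz at $(\frac16,\frac16)$. No global estimate holds strictly above this line, by the example below. The point $(\frac1{p_0},\frac1{q_0})$ lies on the line, since $\frac{4(\alpha-1)+6}{4\alpha+2}=1$. Similarly, $\frac4{p_{s_c}}+\frac2{q_{s_c}}=\frac4k$. So your interpolation between local smoothing, the maximal function and $L^\infty_{t,x}$ puts weight exactly $1-\frac4{p_0}-\frac2{q_0}=0$ on $L^\infty_{t,x}$. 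Neither a shift of regularity nor a Littlewood--Paley square function can help, because the obstruction sits at a single frequency.

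Concretely, let $\widehat{f_\epsilon}(\xi)=\epsilon^{-1/2}\bigl[\chi(\tfrac{\xi-1}{\epsilon})+\chi(\tfrac{\xi+1}{\epsilon})\bigr]$ with $0\le\chi\in C^\infty_c(-1,1)$ even and nonzero. Then $f_\epsilon$ is real, $\|f_\epsilon\|_{H^{\frac\alpha2}}\sim1$, and $\sup_{t,x}|V_\alpha(t)f_\epsilon(x)|\lesssim\|\widehat{f_\epsilon}\|_{L^1}\lesssim\epsilon^{1/2}$. On the other hand, on the tube $\{|t|\le c\epsilon^{-2},\ |x-(\alpha+1)t|\le c\epsilon^{-1}\}$ one has $D^{s_c+\frac12}V_\alpha(t)f_\epsilon(x)=\epsilon^{1/2}b(t,x)\cos(x-t+\theta(t,x))$ with $b\sim1$ and $|\theta|\lesssim c$. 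Hence $\|D^{s_c+\frac12}V_\alpha f_\epsilon\|_{L^{p_0}_xL^{q_0}_t}\gtrsim\epsilon^{\frac12-\frac1{q_0}-\frac2{p_0}}=1$. Thus $\|V_\alpha f_\epsilon\|_S\gtrsim1$, while every quantity $N^{-s_c}\|V_\alpha P_Nf_\epsilon\|_{L^\infty_{t,x}}$ tends to $0$.

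The same Knapp-type sequence shows that the statement itself fails as written. We have $\|V_\alpha(\tau_n)f_{\epsilon_n}(\cdot+x_n)\|_{L^\infty}\lesssim\epsilon_n^{1/2}\to0$ uniformly in $(\tau_n,x_n)$. So every weak limit in $H^{\frac\alpha2}$ vanishes (test against Schwartz functions), although $\liminf_n\|V_\alpha f_{\epsilon_n}\|_{S}>0$.

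Your plan closely follows the paper's proof: high frequencies by Strichartz and Bernstein, a H\"older split of the low frequencies against $L^\infty_{t,x}$, then a concentration point. The paper's proof has the same defect. Its split uses the factor $\|D^{s_c+\frac{\alpha-1}{6}}P_{\le N}V_\alpha v_n\|_{L^{6/\eta}_{t,x}}$ with $\eta>1$ (and $L^{3k/(2\eta)}_{t,x}=L^{6/\eta}_{t,x}$ when $k=4$). This lies strictly above the admissible line and is not controlled by $\|v_n\|_{H^{\frac\alpha2}}$; for $f_\epsilon$ it grows like $\epsilon^{(1-\eta)/2}$.

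What your scheme does prove is the reverse inequality for $L^{p_{s_c}}_xL^{q_{s_c}}_t$ alone when $k\ge6$. H\"older with the maximal estimate, local smoothing and $L^\infty_{t,x}$ gives $\|V_\alpha P_Nv\|_{L^{p_{s_c}}_xL^{q_{s_c}}_t}\lesssim\|P_Nv\|_{\dot H^{s_c}}^{4/k}\bigl(N^{-\frac\alpha k}\|V_\alpha P_Nv\|_{L^\infty_{t,x}}\bigr)^{1-\frac4k}$. The factors $\|P_Nv\|_{\dot H^{s_c}}\lesssim\min(N^{s_c},N^{s_c-\frac\alpha2})\|v\|_{H^{\frac\alpha2}}$ are summable because $0<s_c<\frac\alpha2$. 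The same bounds confine $N$ to a compact range. Your Step 2 then goes through, with duality against $\check\chi_N$ as a valid substitute for the paper's Sobolev embedding.

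Controlling the full $S$, or treating $k=4$, requires a different mechanism. Examples are frequency-modulation parameters in the profile decomposition, or a perturbation theory that only needs the remainder to be small in norms strictly below the admissible line.
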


\begin{proof}
Recall that $P_{\leq N}$ and $P_{>N}$ are the Littlewood-Paley projections of frequencies $\leq N$ and $>N$, respectively. Note that, by Strichartz and Bernstein,
\begin{align}
    \|P_{>N}V_{\alpha}(t)v_n\|_{S(\R)} &\lesssim \|P_{>N}v_n\|_{\dot H^{s_c}} \lesssim_A N^{-(\frac\alpha2-s_c)},
\end{align}
and, by H\"older, Strichartz and Bernstein, choosing $1< \eta < 1+\frac{s_c}{3}$,
\begin{align}
    \|D^{s_c+\frac12}P_{\le N}V_{\alpha}(t) v_n\|_{L^{p_0}_xL^{q_0}_t} &
    \\&\hspace{-50pt}\leq \|D^{s_c+\frac{\alpha+1}{2}}\!P_{\leq N}V_{\alpha}(t) v_n\|_{L^{\infty}_x L^{2}_t}^{\frac{4\shortminus\alpha}{2\alpha+1}}
    \|D^{s_c+\frac{\alpha\shortminus1}{6}}\!P_{\leq N}V_{\alpha}(t) v_n\|_{L^{\frac{6}{\eta}}_{t,x}}^{\frac{3(\alpha\shortminus1)}{(2\alpha+1)\eta}}\|D^{\scriptscriptstyle s_c+\frac{\alpha\shortminus1}{6}\!}P_{\leq N}V_{\alpha}(t) v_n\|_{L^{\infty}_{t,x}}^{\frac{3(\alpha\shortminus1)(\eta\shortminus1)}{(2\alpha+1)\eta}}\\
    &\hspace{-50pt}\lesssim_A N^{s_c} \|V_{\alpha}(t)P_{\leq N}\! v_n\|_{L^{\infty}_{t,x}}^{\frac{3(\alpha\shortminus1)(\eta\shortminus1)}{(2\alpha+1)\eta}},
\end{align}
and 
\begin{align}
    \|P_{\leq N}V_{\alpha}(t) v_n\|_{L^{p_{s_c}}_xL^{q_{s_c}}_t} &\leq \|D^{-\frac{\alpha-1}{k}}P_{\leq N}V_{\alpha}(t) v_n\|_{L^k_x L^{\infty}_t}^{\frac{4-\alpha}{2\alpha+1}}
    \|D^{\frac{4-\alpha}{3k}}P_{\leq N}V_{\alpha}(t) v_n\|_{L^{\frac{3k}{2\eta}}_{t,x}}^{\frac{3(\alpha-1)}{(2\alpha+1)\eta}}    \|D^{\frac{4-\alpha}{3k}}P_{\leq N}V_{\alpha}(t) v_n\|_{L^{\infty}_{t,x}}^{\frac{3(\alpha-1)(\eta-1)}{(2\alpha+1)\eta}}\\
    &\lesssim_AN^{s_c}\|V_{\alpha}(t)P_{\leq N} v_n\|_{L^{\infty}_{t,x}}^{\frac{3(\alpha-1)(\eta-1)}{(2\alpha+1)\eta}}.
\end{align}







Therefore, we can choose $\tau_n$, $x_n$ and $N(\delta) =C\, \delta^{-\frac{2}{\alpha-2s_c}}$ for large $C>0$ such that
\begin{equation}
    |V_{\alpha}(\tau_n)P_{\leq N} v_n(x_n)|^{\frac{\alpha-1}{k+\alpha-1}} \gtrsim_A \left(\delta - N^{-(\frac\alpha2-s_c)}\right)N^{-s_c} \gtrsim \ \delta^{\frac{\alpha}{\alpha-2s_c}}
\end{equation}
Now, passing to a subsequence, there exists $\phi \in H^{\frac\alpha2}(\R)$ such that
\begin{equation}
    V_{\alpha}(\tau_n)v_n(\cdot + x_n) \rightharpoonup \phi
\end{equation}
weakly in $H^{\frac\alpha2}(\R)$. Moreover, another application of Sobolev and weak convergence gives
\begin{equation}
     \|\phi\|_{H^{\frac\alpha2}} \gtrsim  \|P_{\leq N} \phi\|_{L^\infty} \geq |P_{\leq N}\phi(0)| = \lim_n |V_{\alpha}(\tau_n)P_{\leq N} v_n(x_n)| \gtrsim_A \delta^{\frac{\alpha (2\alpha+1)\eta}{3(\alpha-1)(\alpha-2s_c)(\eta-1)}}.
\end{equation}
\end{proof}
\begin{proof}[Proof of Theorem \ref{thm:main}]  We construct the profiles iteratively. Set $W_n^0 = u_n$. For $j \geq 1$, define
\begin{equation}
    \delta_j = \limsup_{n \to \infty} \|V_{\alpha}(t) W_n^{j-1}\|_{S^{s_c}}.
\end{equation}

If $\delta_j = 0$, we stop the process and set all $\phi^k = 0$ for $k \geq j$. If $\delta_j > 0$, apply Lemma \ref{lem:inverse_strichartz} 
to the sequence $(W_n^{j-1})_n$ to obtain sequences $(\tau_n^j)_n$, $(x_n^j)_n$ and a profile 
$\phi^j \in H^{\frac\alpha2}(\R) \setminus \{0\}$ such that, as $n \to +\infty$,
\begin{equation}
    V_{\alpha}(\tau_n^j) W_n^{j-1}(\cdot + x_n^j) \weakto \phi^j \quad \text{weakly in } H^{\frac\alpha2}(\R).
    \label{eq:weak_lim_j}
\end{equation}

Define the remainder
\begin{equation}
    W_n^j = W_n^{j-1} - V_{\alpha}(-\tau_n^j)\phi^j(\cdot - x_n^j)
    \label{eq:remainder_def}
\end{equation}
so that, by \eqref{eq:weak_lim_j},
\begin{equation}\label{eq:vanishing_remainder_evolution}
 V_{\alpha}(\tau_n^j)W_n^j(\cdot+x_n^j)\rightharpoonup 0   
\end{equation}
as $n \to +\infty$. Note that, for $l<j$, 
\begin{equation}
    V_{\alpha}(\tau_n^l) W_n^{j-1}(\cdot + x_n^l) 
    = V_{\alpha}(\tau_n^l) W_n^{l-1}(\cdot + x_n^l)  - \phi^l  - \sum_{k=l+1}^{j-1}
    V_{\alpha}(\tau_n^l - \tau_n^k) \phi^{k}(\cdot + x_n^l - x_n^k),
\end{equation}
therefore, if \eqref{eq:orthog_params} holds for all $k, l < j$, we have that $ V_{\alpha}(\tau_n^l - \tau_n^k) \phi^{k}(\cdot + x_n^l - x_n^k) \rightharpoonup 0$ 
for all $k, l < j$, and since $ V_{\alpha}(\tau_n^l) W_n^{l-1}(\cdot + x_n^l)  - \phi^l \rightharpoonup 0$, we have  $V_{\alpha}(\tau_n^l) W_n^{j-1}(\cdot + x_n^l)\rightharpoonup 0$. Since \eqref{eq:weak_lim_j}  holds, we then have $|\tau_n^j - \tau_n^l|+|x_n^j-x_n^l| \to +\infty$ (otherwise $\phi^j = 0$). By induction, property \eqref{eq:orthog_params} is then proved.

For the Pythagorean expansions \eqref{eq:L2_expansion} and \eqref{eq:Hs_expansion}, note that, for $s \in \{0,\frac\alpha2\}$:
\begin{align}
    \|W_n^{j-1}\|_{\dot H^{s_c}}^2 
    &= \|V_{\alpha}(-\tau_n^j)\phi^j(\cdot - x_n^j)\|_{\dot H^{s}}^2 + \|W_n^j\|_{\dot H^{s}}^2 + 2\Re\inn{\phi^j}{V_{\alpha}(\tau_n^j)W_n^j(\cdot + x_n^j)}_{\dot H^{s}} \nonumber \\
    &= \|\phi^j\|_{\dot H^{s}}^2 + \|W_n^j\|_{\dot H^{s}}^2 + o_n(1).\label{eq:energy_step}
\end{align}
Iterating \eqref{eq:energy_step} from $j=1$ to $J$ and recalling $R_n^0 = u_n$, we obtain
\begin{equation}
    \|u_n\|_{\dot H^{s}}^2 = \sum_{j=1}^J \|\phi^j\|_{\dot H^{s}}^2 + \|W_n^J\|_{\dot H^{s}}^2 + o_n(1),
\end{equation}
which imply \eqref{eq:L2_expansion} and 
\eqref{eq:Hs_expansion}. The energy expansion \eqref{eq:energy_expansion} then follows from \eqref{eq:L2_expansion}, \eqref{eq:Hs_expansion}, continuity in time of the linear operator if $\tau_n^j$ is bounded, and vanishing of the $L^{k+2}(\R)$-norm of the linear evolution (and a standard density argument) if $\tau_n^j$ is unbounded.
For the remainder, we have, by Lemma \ref{lem:inverse_strichartz},
\begin{equation}
    \sum_{j=1}^{\infty} \delta_j^{\frac{2\alpha (2\alpha+1)\eta}{3(\alpha-1)(\alpha-2s_c)(\eta-1)}} \lesssim \sum_{j=1}^{\infty} \|\phi_j\|_{H^{\frac\alpha2}}^2 \leq \|v_n\|_{H^{\frac\alpha2}}^2 + o_n(1).
\end{equation}
Since $(v_n)_n$ is bounded in $H^{\frac\alpha2}(\R)$, we conclude \eqref{eq:smallness}. Finally, property \eqref{eq:weak_conv} is obtained by considering \eqref{eq:decomp} at rank $k-1$, then applying \eqref{eq:vanishing_remainder_evolution} and using \eqref{eq:orthog_params} for the remaining sum. This completes the proof of Theorem \ref{thm:main}.
\end{proof}

\subsection{Long-time perturbation}

We now focus on the long-time perturbation theory, which is a fundamental stability tool in the scattering analysis. It ensures that approximate solutions with small error and controlled spacetime norm remain close to genuine solutions.

\begin{prop}[Perturbation theory for \eqref{DGBO}] \label{prop:perturbation}
Let $\alpha \in (1,2]$, $k \geq 4$ be even, and $s_c = \frac12 - \frac{\alpha}{k}$. 
Let $u_0 \in H^{\frac\alpha2}(\R)$, and $\tilde u$ be a global solution (in the sense of Definition \ref{def gwp}) to the perturbed equation
\begin{equation} \label{perturbed_eq}
\partial_t \tilde u + D^\alpha  \partial_x \tilde u + \partial_x(\tilde{u}^{k+1}) = \partial_x e
\end{equation}
satisfying
\begin{equation}
\|u_0 - \tilde u(0)\|_{H^{\frac{\alpha}{2}}} +\|\tilde u\|_{X(\R_+)} +\|\tilde u\|_{S(\R_+)} + \|e\|_{N^{\frac{\alpha+1}{2}}(\R_+)}\leq L<+\infty.
\end{equation}

There exists $\epsilon_0(L)>0$ such that, for any $0<\epsilon<\epsilon_0$, if
\begin{equation}
	\|V_\alpha(t)(u_0 - \tilde u(0))\|_{S(\R_+)} +\|e\|_{\dot N^{s_c}(\R_+)} \leq \epsilon,
\end{equation}
then the corresponding global solution $u$ to the defocusing $k$-dgBO equation \eqref{DGBO} with initial data $u_0$ satisfies
\begin{equation}\label{perturb1}
\|u - \tilde u\|_{S(\R_+)} \lesssim_L \epsilon,  \quad\text{and}\quad \|u - \tilde u\|_{X(\R_+)} \lesssim_L 1,
\end{equation}
\end{prop}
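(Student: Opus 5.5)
We follow the standard long-time perturbation scheme. We first control the difference in the critical space $S$, and only then upgrade to the energy-level space $X$. Set $w = u - \tilde u$. Then $w$ solves
\begin{equation*}
\partial_t w + D^\alpha\partial_x w + \partial_x\big((\tilde u + w)^{k+1} - \tilde u^{k+1}\big) = -\partial_x e, \qquad w(0) = u_0 - \tilde u(0),
\end{equation*}
so by Duhamel
\begin{equation*}
w(t) = V_\alpha(t)w(0) - \int_0^t V_\alpha(t-\tau)\,\partial_x\big[(\tilde u+w)^{k+1}-\tilde u^{k+1} + e\big]\,d\tau .
\end{equation*}

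\textbf{Critical estimate.} Since $\|\tilde u\|_{S(\R_+)}\le L$, we partition $\R_+$ into $M=M(L,\eta)$ consecutive intervals $I_j=[t_j,t_{j+1})$ with $\|\tilde u\|_{S(I_j)}\le \eta$, where $\eta$ is a small absolute constant to be fixed. The difference $(\tilde u+w)^{k+1}-\tilde u^{k+1}$ expands as $\sum_{m=1}^{k+1}\binom{k+1}{m}\tilde u^{k+1-m}w^m$. Applying \eqref{inhomogeneous_strichartz_S} together with the nonlinear estimate in $\dot N^{s_c}$ (in its multilinear form, where $f^k g$ is replaced by a product of $k+1$ factors with one of them measured in $S$) gives on $I_j$
\begin{equation*}
\|w\|_{S(I_j)} \le C\|V_\alpha(t-t_j)w(t_j)\|_{S(I_j)} + C\sum_{m=1}^{k+1}\eta^{k+1-m}\|w\|_{S(I_j)}^m + C\|e\|_{\dot N^{s_c}(I_j)} .
\end{equation*}
We also write the free evolution of $w(t_j)$ through Duhamel from time $0$. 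This expresses $V_\alpha(t-t_j)w(t_j)$ as $V_\alpha(t)w(0)$ plus Duhamel integrals over $[0,t_j]$, and those integrals are bounded by the same Strichartz and nonlinear estimates applied on $I_1,\dots,I_{j-1}$. A continuity (bootstrap) argument on each $I_j$, followed by induction on $j$, then yields $\|w\|_{S(I_j)}\le (2C)^j\epsilon$, provided $\epsilon_0(L)$ is chosen so small that $(2C)^M\epsilon_0\ll 1$ and the nonlinear terms can be absorbed. Summing over the $M$ intervals gives $\|w\|_{S(\R_+)}\lesssim_L\epsilon$.

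\textbf{Energy-level estimate.} We argue in $X$ in the same way, using \eqref{inhomogeneous_strichartz_X} and the $N^{\frac\alpha2}$ nonlinear estimate. The terms $\tilde u^{k+1-m}w^m$ are bounded by products in which at most one factor is measured in $X$ and all others in $S$. Every $S$-norm is small (at most $\eta$ or $\lesssim_L\epsilon$), while the $X$-norms $\|\tilde u\|_X\le L$ and $\|w\|_{X}$ enter linearly. The forcing term satisfies $\|e\|_{N^{\frac\alpha2}}\le\|e\|_{N^{\frac{\alpha+1}{2}}}\le L$, and $\|V_\alpha(t)w(0)\|_X\lesssim L$ by \eqref{strichartz_X}. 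On each $I_j$ we then obtain
\begin{equation*}
\|w\|_{X(I_j)}\lesssim \|w(t_j)\|_{H^{\frac\alpha2}} + L + \eta\|w\|_{X(I_j)} + (\eta+\epsilon)L ,
\end{equation*}
so after absorbing $\eta\|w\|_{X(I_j)}$ and iterating over the $M$ intervals we get $\|w\|_{X(\R_+)}\lesssim_L 1$. Global existence of $u$ is already guaranteed by Theorem~\ref{Teorema existencia critica}, so the bootstrap is legitimate by continuity of the norms in the endpoint of the interval.

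\textbf{Main obstacle.} The delicate step is the multilinear form of the nonlinear estimates. As stated earlier in the paper they are written only for $f^kg$, whereas we need mixed products $\tilde u^{a}w^{b}$, with derivatives distributed through the fractional Leibniz rule. We would obtain these by polarization, that is, by repeating the proof of \cite[Proposition 3.1]{CamposLinaresSantos2024} with distinct factors.

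A second subtle point is the $X$-bound. It must not require smallness of $w$ at energy level, since only $\|w(0)\|_{H^{\frac\alpha2}}\le L$ is assumed. This is why we keep exactly one factor in $X$ and place all the smallness on the $S$-factors.
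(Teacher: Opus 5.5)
Your proposal is correct and follows essentially the same route as the paper: split $\R_+$ into finitely many intervals on which $\|\tilde u\|_{S}$ is small, apply Strichartz and the nonlinear estimates to $w=u-\tilde u$ on each piece, and iterate with a continuity argument to get the $S$-bound $\lesssim_L\epsilon$ and the $X$-bound $\lesssim_L 1$. Your write-up is slightly more careful than the paper's sketch: you decouple the $S$-estimate from the $X$-estimate (the $S$-estimate involves only $S$-norms, so your $\eta$ need not depend on $L$), and you make explicit how $V_\alpha(t-t_j)w(t_j)$ is controlled via Duhamel from time $0$; the multilinear forms you were concerned about follow from the stated $f^kg$ estimates by polarization and homogeneity.
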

\begin{proof}
	Let $0<\delta_0^{k-1} \ll (1+L)^{-1}$ and split $[0,+\infty) = \displaystyle\bigcup_{j=1}^{N(L)} I_k$ into a finite union of intervals $I_j = [t_j, t_{j+1})$ such that $\|\tilde u\|_{S(I_j)}<\delta_0$ for all $j$. 
	Let $w = u- \tilde u $. On each interval $I_j$, $w$ solves
	\begin{equation}
		w(t) = V_{\alpha}(t-t_j)w(t_j)+\int_{t_j}^{t}V_{\alpha}(t-\tau) \partial_x( (\tilde u + w)^{k+1} - \tilde u^{k+1}+e)(\tau) \, d \tau.
	\end{equation}
	Therefore, there exists $C = C(\alpha,k) > 2
    $ such that, for $t \in I_j$,
	\begin{align}
		\|w\|_{X([t_j,t])} 
		& \leq C\|w(t_j)\|_{H^{\frac{\alpha}{2}}} + C\left(\delta_0 + \|w\|_{S(I_j)}\right)^k\|w\|_{X(I_j)} + CL\left(\delta_0 + \|w\|_{S(I_j)}\right)^{k-1}\|w\|_{S(I_j)} + CL,
	\end{align}
	and
	\begin{align}
		\|w\|_{S([t_j,t])} 
		& \leq \|V_{\alpha}(t-t_j)w(t_j)\|_{S(I_j)} + C\left(\delta_0 + \|w\|_{S(I_j)}\right)^k\|w\|_{S(I_j)} + CL\left(\delta_0 + \|w\|_{S(I_j)}\right)^{k-1}\|w\|_{S(I_j)} + C\epsilon.
	\end{align}
	Upon iterating from $I_0$ to $I_j$ and bootstrapping, we have
	\begin{equation}
		\|w\|_{S(I_j)} \leq 2^{N(L)}C\epsilon, \quad \|w\|_{S(I_j)} \leq (2C)^{N(L)} L
	\end{equation}
	for any $j$, as long as we impose $ \,(2C)^{N(L)}\epsilon_0^{k-1} \ll (1+L)^{-1}$.
\end{proof}

\subsection{Compactness}
We now show how minimality allows us to recover compactness.
Let 
\begin{equation}
    L(A) = \sup\left\{ \|u\|_{S(\R)}:  M[u_0]+ E[u_0] < A\right\}
\end{equation}
and
\begin{equation}
    A_c = \sup \{ A \geq 0 : L(A)<+\infty\}.
\end{equation}
The small data theory guarantees $A_c>0$. The perturbation theory, in turn, states that $L: [0,+\infty) \to [0,+\infty]$ is continuous, since, for $f \neq 0$,
\begin{equation}
    \frac{d}{d\eta}\left(M[(1+\eta)f]+E[(1+\eta)f]\right)\Big|_{\eta=0} = 2\|f\|_{L^2}^2+\|D^{\frac{\alpha}{2}}f\|_{L^2}^2+\|f\|_{L^{k+1}}^{k+1} > 0.
\end{equation}

If the conclusion of Theorem \ref{teo:scat} fails, then $A_c<+\infty$. In this case, we show that $L(A_c) = +\infty$. Moreover, we also show that a special solution can be constructed exactly at the level $A_c$.

\begin{lem}\label{lem:one_profile}
    Suppose $A_c<+\infty$. Let $(u_n)_n$ be a sequence of solutions with initial data $(u_{n,0})_n \subset H^{\frac\alpha2}(\R)$ and $(t_n)_n$ be a sequence of time translations such that $M[u_n]+E[u_n] \to A_c$ for all $n$ and $\|u_n\|_{S([t_n,+\infty))} \to +\infty$. Then, there exist a sequence of space translations $(x_n)_{n}$ and $v_0 \in H^{\frac\alpha2}(\R)$ such that $u_{n}(t_n,\cdot+x_n) \to v_0$ in $H^{\frac\alpha 2}(\R)$. Moreover, the corresponding solution $v$ to \eqref{DGBO} with initial data $v_0$ satisfies $M[v]+E[v]=A_c$ and $\|v\|_{S(\R_+)} = \infty$.
\end{lem}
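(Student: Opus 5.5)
The plan is the standard Kenig--Merle extraction. I will apply Theorem \ref{thm:main} to the bounded sequence $u_n(t_n)$; boundedness comes from $M+E\to A_c$ and the coercivity of the defocusing energy. This produces profiles $\phi^j$, parameters $(\tau_n^j,x_n^j)$ and remainders $W_n^J$. The goal is to show that exactly one profile is nonzero, that its time parameter can be taken to be $\tau_n^1\equiv 0$, and that $W_n^1\to 0$ in $H^{\frac\alpha2}$.

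\textbf{Step 1: nonlinear profiles.} For each $j$, after passing to a subsequence, either $\tau_n^j\to\tau^j\in\R$ or $\tau_n^j\to\pm\infty$.
\begin{itemize}
\item If $\tau^j$ is finite, I let $v^j$ be the global solution with $v^j(-\tau^j)=V_\alpha(-\tau^j)\phi^j$.
\item If $\tau_n^j\to\pm\infty$, I use Proposition \ref{prop:wave_operator} to obtain $v^j$ with $\|v^j(-\tau_n^j)-V_\alpha(-\tau_n^j)\phi^j\|_{H^{\frac\alpha2}}\to0$.
\end{itemize}
In both cases I set $v_n^j(t,x)=v^j(t-\tau_n^j,x-x_n^j)$. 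By the Pythagorean expansions \eqref{eq:L2_expansion} and \eqref{eq:energy_expansion}, and since every term is nonnegative (each energy is positive because $k$ is even), we get
\begin{equation}
\sum_j \bigl(M[v^j]+E[v^j]\bigr)+\limsup_n \bigl(M[W_n^J]+E[W_n^J]\bigr)\le A_c .
\end{equation}
For the energy of a profile sent to $\tau\to\pm\infty$, the $L^{k+2}$ part of the linear evolution vanishes. Moreover $M$ and $E$ of $v^j$ agree with the limits of those of $V_\alpha(-\tau_n^j)\phi^j$, by conservation and wave-operator convergence.

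\textbf{Step 2: more than one profile leads to a contradiction.} Suppose either two profiles are nonzero, or one is nonzero with $\limsup_n(M+E)[W_n^J]>0$. Then each $v^j$ has $M+E\le A_c-\eta$ for some $\eta>0$. Hence $\|v^j\|_{S(\R)}\le L(A_c-\eta)<\infty$, and small-data theory handles all but finitely many $j$. I then form the approximate solution
\begin{equation}
\tilde u_n=\sum_{j\le J}v_n^j+V_\alpha(t)W_n^J ,
\end{equation}
posed on $[0,\infty)$ after the time shift by $t_n$. I verify the hypotheses of Proposition \ref{prop:perturbation}, with $L$ uniform in $J$ and $n$, and check that the error satisfies $\limsup_n\|e_n^J\|_{\dot N^{s_c}}\to0$ as $J\to\infty$. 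This uses \eqref{eq:smallness} for the $W_n^J$ terms. For the cross terms $\prod v_n^{j_i}$, orthogonality \eqref{eq:orthog_params} together with density of compactly supported functions in $S$ makes them vanish. Perturbation then gives $\limsup_n\|u_n\|_{S([t_n,\infty))}<\infty$, a contradiction.

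If no profile is nonzero, then $\|V_\alpha W_n^0\|_S\to0$, and small-data perturbation again contradicts the hypothesis. So exactly one profile $\phi^1$ is nonzero, with $M+E$ of the limit profile equal to $A_c$. Consequently $(M+E)[W_n^1]\to0$, which gives $W_n^1\to0$ in $H^{\frac\alpha2}$ by coercivity.

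\textbf{Step 3: the time parameter is bounded.} Suppose instead $\tau_n^1\to+\infty$, with the sign convention under which $V_\alpha(-\tau_n^1)\phi^1$ is the backward-in-time evolution. Then
\begin{equation}
\|V_\alpha(t)u_n(t_n)\|_{S([0,\infty))}\approx\|V_\alpha(t)\phi^1\|_{S([-\tau_n^1,\infty))}\to0 ,
\end{equation}
and small-data theory via Proposition \ref{prop:perturbation} (with $\tilde u$ the free evolution) contradicts $\|u_n\|_{S([t_n,\infty))}\to\infty$. If instead $\tau_n^1\to-\infty$, the wave-operator solution $v^1$ scatters forward, so its forward $S$-norm is finite. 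Perturbation around $v_n^1$ then again gives a uniform bound, a contradiction.

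Hence $\tau_n^1\to\tau^1$ is finite, and by continuity of $V_\alpha$ we may absorb it into the profile. Taking $x_n=x_n^1$ gives $u_n(t_n,\cdot+x_n)\to v_0:=V_\alpha(-\tau^1)\phi^1$ strongly. By continuity of the conserved quantities, $M[v]+E[v]=A_c$. If $\|v\|_{S(\R_+)}$ were finite, Proposition \ref{prop:perturbation} applied with $\tilde u=v$ would bound $\|u_n\|_{S([t_n,\infty))}$, so $\|v\|_{S(\R_+)}=\infty$.

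The main obstacle is Step 2, specifically the decoupling estimate for $\|e_n^J\|_{\dot N^{s_c}}$ and the uniform bound on $\|\sum_j v_n^j\|_{X\cap S}$. Both rely on the nonlinear estimates. They also rely on orthogonality in $L^{p_{s_c}}_xL^{q_{s_c}}_t$ and in the derivative component of $S$. That derivative component needs a commutation argument, since $D^{s_c+\frac12}$ is nonlocal; I would handle it through approximation by compactly supported functions plus the fractional chain rule.
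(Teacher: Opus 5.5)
Your Steps 1--2 follow the paper's route: profile decomposition of $u_n(t_n)$, nonlinear profiles via Proposition \ref{prop:wave_operator}, and exclusion of several bad profiles or none by Proposition \ref{prop:perturbation}. Keeping $V_\alpha(t)W_n^J$ inside the approximate solution, rather than in the data mismatch, is a harmless variant.

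The gap is in Step 3. In your convention $u_n(t_n)=V_\alpha(-\tau_n^1)\phi^1(\cdot-x_n^1)+o_{H^{\frac\alpha2}}(1)$. So if $\tau_n^1\to+\infty$, the interval $[-\tau_n^1,\infty)$ increases to $\R$ and
\[
\|V_\alpha(t)\phi^1\|_{S([-\tau_n^1,\infty))}\to\|V_\alpha(t)\phi^1\|_{S(\R)}>0,
\]
not $0$. What does tend to zero is $\|V_\alpha(t)\phi^1\|_{S((-\infty,-\tau_n^1])}$, and that only controls $u_n$ on $(-\infty,t_n]$. The configuration in which the forward free evolution disperses is $\tau_n^1\to-\infty$, which is exactly your second subcase, and your argument there is fine. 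The incoming configuration is therefore never treated. In it, $u_n(t_n)$ is a free wave that has not yet interacted, and $v^1$ is the wave-operator solution scattering as $t\to-\infty$. The forward $S$-norm of such a $v^1$ is unknown and may be infinite, so neither small-data theory nor perturbation around $v^1_n$ gives a contradiction with $\|u_n\|_{S([t_n,\infty))}\to\infty$.

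This cannot be repaired under the stated forward-only hypothesis. Suppose there were a solution $v$ with $M[v]+E[v]=A_c$ that scatters as $t\to-\infty$ but has $\|v\|_{S(\R_+)}=\infty$; nothing available before the rigidity section excludes this. Then $u_n\equiv v$, $t_n=-n$ satisfy the hypotheses. Yet $v(-n)$ is asymptotic in $H^{\frac\alpha2}$ to a dispersing free wave, so $\|v(-n)\|_{L^{k+2}}\to0$ while the mass stays fixed, and no translates can converge strongly.

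The missing ingredient is a two-sided blow-up assumption: also require $\|u_n\|_{S((-\infty,t_n])}\to\infty$. Then in the incoming case, backward smallness plus small-data theory gives the contradiction. In the application this can be arranged:
\begin{enumerate}
\item Choose the time translations so that both halves of $\|u_n\|_{S(\R)}$ diverge.
\item Perturbation then yields a limit $u_c$ with $\|u_c\|_{S(\R_+)}=\|u_c\|_{S(\R_-)}=\infty$.
\item For $u_n=u_c$ and $t_n\ge0$, both halves are then automatically infinite.
\end{enumerate}
The paper's own treatment of this case only obtains smallness on $(-\infty,-\tau_n/2)$, so it has the same defect and needs the same extra hypothesis.
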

\begin{proof}
Denote by $DgBO(t) v$ the solution of \eqref{DGBO} with initial data $v$. Passing to a subsequence, we write the linear profile decomposition (Theorem \ref{thm:main})

\begin{equation}
    u_n(t_n) = \sum_{j=1}^{J} V_{\alpha}(\tau_n^j) \phi^j(x - x_n^j) + W_n^J(x).
\end{equation}
If $\tau_n^j$ has a bounded subsequence, we can assume $\tau_n^j \to \tau \in \R$. In this case, define $\tilde\phi^j(t) = DgBO(t-\tau)V_\alpha(\tau) \phi^j$. If not, we can assume $\tau_n^j \to +\infty$ or $\tau_n^j \to -\infty$ as $n \to \infty$ and define $\tilde \phi^j(t)$ as the wave operator given by Proposition \ref{prop:wave_operator} such that
\begin{equation}
    \|\tilde\phi^j(t) - V_{\alpha}(t)\phi^j\|_{H^{\frac\alpha 2}_x}\to 0, \quad\text{ as }\quad t \to \pm \infty.
\end{equation}
We then write
\begin{equation}
    u_n(t_n,x) = \sum_{j=1}^{J} \tilde\phi^j(\tau_n^j,x - x_n^j) + \tilde W^J_n,
\end{equation}
with
\begin{equation}
    \tilde W^J_n = \sum_{j=1}^{J} (V_{\alpha}(\tau_n^j) \phi^j(x - x_n^j)-\tilde \phi^j(\tau_n^j,x - x_n^j)) + W^J_n.
\end{equation}
Moreover, we define
\begin{equation}
    \tilde \phi^j_n(t,x) = \tilde\phi^j(t+\tau_n^j,x-x_n^j)
\end{equation}
and
\begin{equation}
    \tilde u^J_n = \sum_{j=1}^J \tilde \phi^j_n.
\end{equation}

There are two cases to consider:

\underline{Case 1} (no bad profiles): There exists $\eta>0$ such that $\displaystyle\sup_j\left(M[\phi^j] + \displaystyle\limsup_{n \to +\infty} E[V_{\alpha}(\tau^j_n)\phi^j]\right) \leq A_c-\eta$.

In this case, $ M[\tilde \phi^j] +  E[\tilde \phi^j] \leq A_c - \eta$ for all $j \geq 0$, which implies, by Theorem \ref{prop:scattering_criterion}, $\|\tilde\phi^j\|_{X(\R) \cap S(\R)} \lesssim_{\eta}1$ for all $j$.  Moreover, there exists $J_0>0$ such that 
\begin{equation}
    \sum_{j= J_0}^{\infty} \|\phi^j\|^2_{H^{\frac \alpha 2}}\ll\delta_{sd}^2.
\end{equation}
Therefore, for any $J \geq J_0$, 
\begin{equation}
    \lim_{n \to +\infty }\sum_{j=J_0}^J\|\tilde \phi^j(\tau_n^j)\|_{H^{\frac{\alpha}{2}}}^2 \lesssim \sum_{j=J_0}^J\|\phi^j\|_{H^{\frac{\alpha}{2}}}^2 \ll \delta_{sd}^2.
\end{equation}
This implies, by small data theory, that 
\begin{equation}
    \sum_{j=J_0}^J\|\tilde \phi^j\|_{X(\R) \cap S(\R)}^2 \lesssim \delta_{sd}^2.
\end{equation}

Thus, for any $J \geq 1$, by the elementary inequality
\begin{equation}
\label{eq:elementary_inequality}
\left|\Bigg| \sum_{j=1}^J z_j \Bigg|^a - \sum_{j=1}^J |z_j|^a \right|\leq C_J \sum_{\substack{l \neq m\\l,m\leq J}} |z_l| |z_m|^{a-1}, \quad a > 1,
\end{equation}
we have, since $p_{s_c}\geq 2$ and $2(\alpha-1)/(\alpha+2)<1$,
\begin{align}
    \|\tilde u^J_n\|_{L^{p_{s_c}}_xL^{q_{s_c}}_t}^{p_{c}} &= \int \left(\int |\tilde \phi^j_n|^{q_{c}}\, dt \right)^{\frac{2(\alpha-1)}{\alpha+2}}\, dx\\
    &\leq \sum_{j=1}^{J_0-1} \|\tilde \phi^j\|_{S(\R)}^{p_{s_c}} +  \left(\sum_{J_0\leq j \leq J} \|\tilde \phi^j\|_{S(\R)}^{2} \right)^{\frac{p_{s_c}}{2}}  +  C_J\left(\sup_{j\geq 1}\|\tilde \phi^j\|_{S(\R)}^{\frac{(2\alpha+1)k-4(\alpha-1)}{\alpha+2}}\right)\sum_{\substack{l \neq m\\l,m\leq J}}\|\tilde\phi^l_n\tilde \phi^m_n\|_{L^{\frac {p_{s_c}} {2}}_xL^{\frac {q_{s_c}} {2}}_t}
    \end{align}
    
which implies, due to $q_{s_c}>1$ and the asymptotic orthogonality \eqref{eq:orthog_params}, 
\begin{equation}
\sup_{J}\left(\limsup_{n \to \infty}\|\tilde u^J_n\|_{L^{p_{s_c}}_xL^{q_{s_c}}_t}\right) \lesssim_{J_0} \sum_{j=1}^{J_0} \|\tilde \phi^j\|_{S(\R)} + \delta_{sd}
< +\infty.
\end{equation}

Similarly, $\displaystyle\sup_{J}\left(\limsup_{n \to \infty}\|D^{s_c+\frac12}\tilde u^J_n\|_{L^{p_0}_xL^{q_0}_t}\right)+\displaystyle\sup_{J}\left(\limsup_{n \to \infty}\|\tilde u^J_n\|_{X(\R)}\right) \lesssim_{J_0}1< +\infty$. We now observe that, by the Pythagorean expansions \eqref{eq:L2_expansion} and \eqref{eq:Hs_expansion}
\begin{equation}
    \limsup_{n \to +\infty}\|u_n(t_n) - \tilde u_n^j(0)\|_{H^{\frac{\alpha}{2}}} \leq \limsup_{n \to +\infty}\|\tilde W_n^{J}\|_{H^{\frac \alpha 2}} \leq \|u_n(t_n)\|_{H^{\frac \alpha 2}} \lesssim_{A_c} 1.
\end{equation}
Moreover, this remainder vanishes in the Strichartz norm:
\begin{equation}
    \limsup_{n \to +\infty}\|V_{\alpha}(t)(u_n(t_n) - \tilde u_n^j(0))\|_{S(\R)}   \leq \limsup_{n \to +\infty}\|V_{\alpha}(t) W_n^J\|_{H^{\frac \alpha 2}} = o_J(1).
\end{equation}
Finally, the error in the approximation can be directly estimated by
\begin{align}
    \left\|\left(\sum_{j=1}^J \tilde \phi^j_n\right)^{k+1}-\sum_{j=1}^J (\tilde \phi^j_n)^{k+1}\right\|_{N^{\frac{\alpha}{2}}(\R)} &\leq  \sup_{j\geq 1} \|\tilde \phi^j\|_{X(\R)\cap S(\R)}^{k+1} \lesssim_{\eta} 1
\end{align}
and
\begin{align}\label{eq:err_sk_1}
    \left\|\left(\sum_{j=1}^J \tilde \phi^j_n\right)^{k+1}-\sum_{j=1}^J (\tilde \phi^j_n)^{k+1}\right\|_{\dot N^{s_c}(\R)} 
   &\leq \left(\sup_{j\geq 1} \|\tilde \phi^j\|_{ S(\R)}^{k-1} \right)\sum_{\substack{l \neq m\\l,m \leq J}} \|\tilde \phi^l_n \tilde \phi^m_n\|_{L^ {\frac {p_{s_c}} {2}}  _xL^{\frac {q_{s_c}} {2}}_t(\R \times \R)}\\
   &\quad+  \left(\sup_{j\geq 1} \|\tilde \phi^j\|_{S(\R)}^{k-1} \right)\sum_{\substack{l \neq m\\l,m \leq J}} \|\tilde \phi^l_n \tilde \phi^m_n\|_{\dot H^{s_c+\frac{1}{2},p_1}_xL^{q_1}_t(\R \times \R)},\label{eq:err_sk_2}
\end{align}
with $p_1 = \frac{(4\alpha+2)k}{k(\alpha-1)+2(\alpha+2)}$, $q_1 = \frac{(4\alpha+2)k}{4(\alpha-1)+3k}$. The orthogonality condition \eqref{eq:orthog_params} implies
\begin{equation}
    \limsup_{n\to+\infty} \eqref{eq:err_sk_1} = o_J(1).
\end{equation}
To deal with \eqref{eq:err_sk_2}, we observe that $s_c + \frac{1}{2}<1<\frac{\alpha+1}{2}$ and interpolate
\begin{align}
    \|\tilde \phi^l_n \tilde \phi^m_n\|_{\dot H^{s_c+\frac{1}{2},p_1}_xL^{q_1}_t(\R \times \R)} &\lesssim \|\tilde \phi^l_n \tilde \phi^m_n\|_{L^{p_1}_xL^{q_1}_t(\R \times \R)} + \|\partial_x(\tilde \phi^l_n \tilde \phi^m_n)\|_{L^{p_1}_xL^{q_1}_t(\R \times \R)}\\
    &\lesssim \|\tilde \phi^l_n \tilde \phi^m_n\|_{L^{p_1}_xL^{q_1}_t(\R \times \R)} + \|\partial_x(\tilde \phi^l_n) \,\tilde \phi^m_n\|_{L^{p_1}_xL^{q_1}_t(\R \times \R)}+\|\tilde \phi^l_n \,\partial_x(\tilde \phi^m_n)\|_{L^{p_1}_xL^{q_1}_t(\R \times \R)},
\end{align}
to obtain, again by \eqref{eq:orthog_params},
\begin{equation}
    \limsup_{n \to +\infty}\eqref{eq:err_sk_2} = o_J(1).
\end{equation}
We then obtain $J_1>0$ and, for any $J \geq J_1$, $n_1(J)>0$ such that, for any $n \geq n_1(J)$, invoking Proposition \ref{prop:perturbation}
\begin{equation}
    \|u_n\|_{S(\R)} \lesssim \|\tilde u^J_n\|_{S(\R)} \lesssim_{J_0} 1,
\end{equation}
which is a contradiction. Therefore, Case 1 is precluded.

\underline{Case 2} (exactly one bad profile): $\displaystyle\sup_j\left(M[\phi^j] + \displaystyle\limsup_{n \to +\infty} E[V_{\alpha}(\tau^j_n)\phi^j]\right) = A_c$.

In this case, by the Pythagorean expansions \eqref{eq:L2_expansion} and \eqref{eq:Hs_expansion}, we must have exactly one $j \geq 1$ such that $\phi^j \neq 0$. Therefore, we can write
\begin{equation}
    u_n(t_n,x) =  V_{\alpha}(\tau_n) \phi(x - x_n) + W_n(x),
\end{equation}
with
\begin{equation}
    M[\phi]+\limsup_{n\to+\infty}E[V_{\alpha}(\tau_n) \phi]=A_c,\quad\text{and}\quad\limsup_{n\to+\infty}\left(M[W_n]+E[W_n]\right) = 0.
\end{equation}
We then claim that $\tau_n$ has a convergent subsequence. If not, we can assume either $\tau_n \to +\infty$ or $\tau_n \to -\infty$. To preclude the former, we compute
\begin{align}
    \|V_\alpha(t)u_n(t_n)\|_{S(\R_+)} &\leq \|V_\alpha(t+\tau_n)\phi\|_{S(\R_+)}+\|V_\alpha(t)W_n\|_{S(\R_+)}\\
    &=\|V_\alpha(t)\phi\|_{S([\tau_n,+\infty))}  +\|V_\alpha(t)W_n\|_{S(\R)}\\
    &=o_n(1),
\end{align}
which would imply, by small data theory, $\|u_n\|_{S(\R_+)}<+\infty$ for $n$ sufficiently large. For the former, we compute
\begin{align}
    \|V_\alpha(t)u_n(t_n)\|_{S(-\infty,-\frac{\tau_n}{2})} &\leq \|V_\alpha(t+\tau_n)\phi\|_{S(-\infty,-\frac{\tau_n}{2})}+\|V_\alpha(t)W_n\|_{S(\R)}\\
    &=\|V_\alpha(t)\phi\|_{S(-\infty,\frac{\tau_n} 2)}  +\|V_\alpha(t)W_n\|_{S(\R)}\\
    &=o_n(1),
\end{align}
which then would imply $\|u_n\|_{S(\R)}<+\infty$. Therefore, the only possibility is to have, passing to a subsequence and using time-translation (possibly redefining $\phi$), $\tau_n \to 0$. Define $v_0 = \phi$, $v(t) = DgBO(t)v_0$ and $\tilde \phi (t) = v(t,\cdot-x_n)$. We then have $u_n(t_n,\cdot+x_n) \to v_0$ in $H^\frac{\alpha}{2}$ and $M[v]+E[v]=M[v_0]+E[v_0]=A_c$. Moreover, since $\|u_n(t_n)-\tilde \phi(0)\|_{X(\R)} \to 0$ as $n \to +\infty$, we must have $\|v\|_{S(\R_+)}=\|\tilde \phi\|_{S(\R_+)}=+\infty$, by perturbation theory. This concludes the proof.
\end{proof}

\subsection{The critical solution}

We now have all the tools needed for the construction:

\begin{proof}[Proof of Proposition \ref{prop:nontrivial_localization}]

If $A_c<+\infty$, then there exists a sequence $u_n$ of solutions such that $M[u_n]+E[u_n] \nearrow A_c$ and $\|u_n\|_{S(\R)} \nearrow +\infty$. By time translation, we can assume $\|u_n\|_{S(\R_+)} \nearrow +\infty$.

Applying Lemma \ref{lem:one_profile} to $u_n$ with $t_n \equiv 0$ guarantees $u_{c,0} \in H^{\frac{\alpha}{2}}(\R)$ such that the corresponding solution $u_c$ satisfies $M[u_c]+E[u_c]=A_c$ and $\|u_c\|_{S(\R_+)} = \infty$. Now suppose \eqref{eq:nontrivial_localization} does not hold. Then we have a sequence of times $(t_n)_n$ and a sequence of radius $R_n\to +\infty$ such that, for any choice of $x_n \in \R$, 
\begin{equation}\label{eq:vanishing_uc}
    \int_{|x|\leq R_n} |u_c(t_n,x+x_n)|^{k+2}\, dx \to 0.
\end{equation}
Note that, since $u_c$ blows up forward in time, $\|u_c\|_{S([t_n,+\infty))} =+\infty$ for all $n$. Therefore we are able to apply Lemma \ref{lem:one_profile} again to assert the existence of $(x_n)_n$ such that, after passing to a subsequence, $u_c(t_n, \cdot + x_n)\to v_0 \in H^{\frac{\alpha}{2}}(\R)\backslash \{0\}$. This implies the existence of $\tilde R(v_0)>0$ such that
\begin{equation}
       \int_{|x|\leq \tilde R(v_0)} |u_c(t_n,x+x_n)|^{k+2}\, dx \to \int_{|x|\leq \tilde R(v_0)} |v_0(x)|^{k+2}\, dx >0,
\end{equation}
which contradicts \eqref{eq:vanishing_uc}

\end{proof}

\section{The fractional Laplacian}\label{sec-4}

In this section, we recall several fundamental tools related to the fractional
Laplacian. We begin with the Caffarelli-Silvestre extension, which provides a
useful local representation of $D^\beta$ for $\beta \in (1,2)$. We then prove
a number of functional identities and formulas that will be employed
throughout our work.

\subsection{The Caffarelli-Silvestre Extension }
In this subsection we recall the extension characterization of the fractional Laplacian, following the celebrated approach of Caffarelli and Silvestre \cite{CS07}. We work here in the one-dimensional setting, although all arguments extend naturally to $\R^n$.


\subsubsection{The extension problem}

Given $u:\R\to\R$, we consider its \emph{$\beta$-fractional extension} $U:\R\times[0,\infty)\to\R$ defined by the elliptic equation
\begin{equation}\label{eq_ext_u}
    \begin{cases}
    \partial_{xx}^2U(x,z) + \displaystyle\frac{1-\beta}{z}\partial_z U(x,z) + \partial_{zz}^2 U(x,z)=0, &x \in \R, \, z> 0\\
    U(x,0) = u(x), & x \in \R.
    \end{cases}
\end{equation}
In divergence form, \eqref{eq_ext_u} becomes
\begin{equation}\label{eq:div_form}
\operatorname{div}\!\left( z^{1-\beta}\nabla U\right)=0,
\end{equation}
which is the Euler-Lagrange equation for the weighted Dirichlet functional
$$
\mathcal{E}(U):=\iint_{\R\times\R_+} z^{1-\beta} |\nabla U|^2 \, dx\,dz.
$$
The weight $z^{1-\beta}$ belongs to the Muckenhoupt $A_2$ class for $\beta\in(0,2)$ (see \cite{Grafakos}), so solutions of
\eqref{eq:div_form} share many of the regularity properties of classical harmonic functions. The Poisson kernel associated with the equation \eqref{eq:div_form}
is explicitly given by 
\begin{equation}\label{eq:poisson_kernel_cs}
P_z(x):=P(x,z)
= C_\beta\frac{z^\beta}{(x^2+z^2)^{\frac{\beta+1}{2}}},
\end{equation}
where $C_\beta :=\frac{\Gamma\!\left({\beta}/{2}+1/2\right)}{\pi^{1/2} \, \Gamma\!\left({\beta/2}\right)} $ is a normalization constant. This kernel satisfies the homogeneity relation
$$
P_{(\lambda z)}(\lambda x) = |\lambda|^{-1} P_{z}( x),
$$
which is equivalent to the scaling identity
$$
P_z(x)=z^{-1}P_{1}\left(\frac{x}{z}\right)
$$
In particular, for every $z>0$ the kernel has unit mass,
\begin{equation}\label{eq:poisson_mass_ks}
\int_{\R} P_z(x-y)\, dy= 1,
\end{equation}
and $P_z \rightharpoonup \delta_0$ in the distributional sense as $z \to 0^+$.  
This homogeneity structure is fundamental for the representation
\begin{equation}\label{Poisson Kernel U}
    U(x,z) =C_\beta\int \frac{z^\beta}{((x-w)^2 + z^2)^{\frac{\beta+1}{2}}}f(w)\, dw := (P_z *u)(x),
\end{equation}

This Poisson kernel also provides a maximum principle (see \cite{steinbook}, Page 62, Theorem 2.(a))
\begin{equation}
    \sup_{z>0}|U(x,z)| \leq \M u(x),
\end{equation}
where $\M$ is the Hardy-Littlewood maximal function defined in \ref{fun. Maximal}. Moreover, the following energy identity, established in \cite[Equation (3.7)]{CS07}, holds:
\begin{equation}\label{euqacao 3.7 caffarelli}
\iint_{\mathbb{R}\times\mathbb{R}_+} z^{1-\beta}\,|\nabla U(x,z)|^2 \,dx\,dz
= \bigl\| D^{\frac{\beta}{2}} u \bigr\|_{L^2(\mathbb{R})}^2.
\end{equation}
 If, in particular, $u \in L^2(\R)$, one can represent the solution, using the Fourier transform in the $x$ variable, as
\begin{equation}\label{fourier_fractional}
    \hat U(\xi,z) = G(|\xi|z)\hat u(\xi),
\end{equation}
where $G$ satisfies the following second-order IVP
\begin{equation}
    \begin{cases}
         &G''(z)+ \frac{1-\beta}{z}G'(z)- G(z) = 0,\\
         \hfill &G(0) = 1,\\
    \end{cases}
\end{equation}
and satisfies $\displaystyle \lim_{z \to +\infty}G(z) = 0$. By a direct computation, one can write $G$ as
\begin{equation}
G(z) = \dfrac{2^{1-\beta/2}\,z^{\beta/2}}{\Gamma\!\left({\beta}/{2}\right)}\,K_{\beta/2}(z),
\end{equation}
where $K_{\nu}$ is the modified Bessel function (see \cite[p. 375, Equation 9.6.2]{Bessel_reference}). Note that the equation \eqref{fourier_fractional} permits us relate $\nabla_{x,z}U$ and $\partial_x u$. Indeed, we have
\begin{align}
    \partial_x U(x,z) &= P_z * \partial_x u, \text{ and }\\
    \partial_z U(x,z) &= \left[\tfrac{x}{z}P_z \right]* \partial_x u,
\end{align}
where we note that the function $x \mapsto \tfrac{x}{z}P_z(x)$ belongs to $L^1(\R)$ if $\beta>1$. These identities allow us to establish further useful relations (see, for example, the proof of Lemma \ref{lem:claim_tilde_Phi}).

The main result of the extension method is that the fractional Laplacian appears as a boundary flux for the degenerate elliptic equation \eqref{eq_ext_u}.
\begin{prop}[Caffarelli-Silvestre, \cite{CS07}]\label{limit_neumann}
Let $u:\R\to\R$ and let $U$ solve the extension problem \eqref{eq_ext_u}. Then
\begin{equation}\label{eq:cs_limit}
D^\beta u(x)
= -c \lim_{z\to0^+} z^{1-\beta} \partial_z U(x,z),
\end{equation}
where $c = c(\beta) > 0$ is a constant depending only on $\beta$. Moreover, $U$ and $z^{1-\beta}\partial_z U$ are continuous on the half-space $\mathbb{R}\times\overline{\mathbb{R}_+} $.
\end{prop}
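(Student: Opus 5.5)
Our plan is to work on the Fourier side, where the extension is given explicitly by \eqref{fourier_fractional}, $\hat U(\xi,z)=G(|\xi|z)\hat u(\xi)$. We first prove the identity for $u\in\Sc(\R)$ and then extend it by density.

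\textbf{Step 1: reduce to the ODE solution $G$.} For $u\in\Sc(\R)$, differentiation in $z$ gives
\begin{equation}
z^{1-\beta}\partial_z\hat U(\xi,z)=z^{1-\beta}|\xi|\,G'(|\xi|z)\,\hat u(\xi)=|\xi|^{\beta}\,\big(s^{1-\beta}G'(s)\big)\big|_{s=|\xi|z}\,\hat u(\xi).
\end{equation}
The whole claim therefore reduces to a statement about $G$: the limit $\lim_{s\to0^+}s^{1-\beta}G'(s)=-1/c$ exists, with $c>0$, and $s\mapsto s^{1-\beta}G'(s)$ is bounded on $(0,\infty)$.

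\textbf{Step 2: asymptotics of $G$.} We use $G(s)=\frac{2^{1-\beta/2}}{\Gamma(\beta/2)}s^{\beta/2}K_{\beta/2}(s)$ together with the identity $\frac{d}{ds}\big(s^\nu K_\nu(s)\big)=-s^{\nu}K_{\nu-1}(s)$, with $\nu=\beta/2\in(1/2,1)$. This gives
\begin{equation}
s^{1-\beta}G'(s)=-\frac{2^{1-\nu}}{\Gamma(\nu)}\,s^{1-\nu}K_{\nu-1}(s)=-\frac{2^{1-\nu}}{\Gamma(\nu)}\,s^{1-\nu}K_{1-\nu}(s),
\end{equation}
since $K_{-\mu}=K_\mu$. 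The small-argument behaviour $K_\mu(s)\sim\frac12\Gamma(\mu)(s/2)^{-\mu}$ for $\mu=1-\nu>0$ then yields
\begin{equation}
s^{1-\beta}G'(s)\to-\frac{2^{1-\nu}\,2^{-\nu}\,\Gamma(1-\nu)}{\Gamma(\nu)}=:-\frac1c<0 .
\end{equation}
Exponential decay of $K_\mu$ at infinity, together with continuity, shows that this function is bounded on $(0,\infty)$.

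\textbf{Step 3: pass to the limit and prove continuity.} By Step 2, the multiplier $m_z(\xi)=|\xi|^\beta\big(s^{1-\beta}G'(s)\big)|_{s=|\xi|z}$ satisfies $|m_z(\xi)|\lesssim|\xi|^\beta$ uniformly in $z$, and $m_z(\xi)\to-|\xi|^\beta/c$ pointwise. Since $|\xi|^\beta\hat u\in L^1$ for Schwartz $u$, dominated convergence in the inverse Fourier integral gives uniform convergence in $x$:
\begin{equation}
-c\,z^{1-\beta}\partial_zU(\cdot,z)\to D^\beta u .
\end{equation}
The same dominated-convergence argument in $(x,z)$ gives continuity of $z^{1-\beta}\partial_zU$ up to $z=0$, once this function is defined at $z=0$ by its limit. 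For continuity of $U$, we use either $|G|\le1$ with dominated convergence, or the Poisson representation \eqref{Poisson Kernel U} with $P_z\rightharpoonup\delta_0$. For general $u$ in a class such as $H^{\beta}$, we pass to the limit by density; the convergence then holds in $L^2$.

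\textbf{Main obstacle.} The delicate point is Step 2: identifying the sign and the exact constant, and checking uniform boundedness of $s^{1-\beta}G'(s)$ near $0$ when $\beta>1$. Here $G'(s)$ itself blows up like $s^{\beta-1}\cdot s^{1-\beta}$-corrections, so one must use the Bessel identity rather than differentiate naive expansions term by term. If that identity route proves awkward, a fallback is to integrate the ODE in divergence form, $(s^{1-\beta}G')'=s^{1-\beta}G$. This gives
\begin{equation}
s^{1-\beta}G'(s)=-\int_s^\infty t^{1-\beta}G(t)\,dt,
\end{equation}
using decay at infinity, with the integral convergent at $0$ because $1-\beta>-1$. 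The limit is then $-\int_0^\infty t^{1-\beta}G\,dt<0$, since $G>0$, and boundedness follows immediately.
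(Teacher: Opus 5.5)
The paper gives no proof of its own here; it quotes the result from \cite{CS07}. Your proof is correct and self-contained. It is essentially the Fourier-transform argument of Caffarelli--Silvestre, built from the ingredients the paper records just before the statement (\eqref{fourier_fractional} and the Bessel form of $G$), with the constant made explicit through Bessel asymptotics.

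What your route buys is the exact value $c=2^{\beta-1}\Gamma(\beta/2)/\Gamma(1-\beta/2)$ and a bound on $s^{1-\beta}G'(s)$ that is uniform on $(0,\infty)$. From that bound, dominated convergence gives uniform-in-$x$ convergence, continuity up to $z=0$, and $L^2$ convergence for $H^\beta$ data. Your fallback via $(s^{1-\beta}G')'=s^{1-\beta}G$ reaches the same limit, $c^{-1}=\int_0^\infty t^{1-\beta}G(t)\,dt$, using only positivity and decay of $G$.

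The other classical route goes through the Poisson kernel \eqref{eq:poisson_kernel_cs}. By unit mass,
\begin{equation*}
\frac{U(x,z)-u(x)}{z^{\beta}}=C_\beta\int_\R\frac{u(w)-u(x)}{\big((x-w)^2+z^2\big)^{\frac{1+\beta}{2}}}\,dw,
\end{equation*}
and letting $z\to0^+$ (after symmetrizing in $w$, since $\beta>1$) lands directly on \eqref{eq:gagliardo_singular_integral}. That route is pointwise and local: bounded $u$ that is $C^{1,1}$ near $x$ suffices, with no Fourier integrability. The price is a less explicit constant and an extra step to pass from the difference quotient to $z^{1-\beta}\partial_zU$.

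A few remarks, none of which affects your argument.
\begin{itemize}
\item The statement's ``$u:\R\to\R$'' is too loose, and your reading of it is the right one. $U$ has to be the bounded solution \eqref{Poisson Kernel U}, since $U+Cz^{\beta}$ solves the same boundary problem and shifts the limit of $z^{1-\beta}\partial_zU$ by $C\beta$.
\item Continuity of $z^{1-\beta}\partial_zU$ up to $z=0$ forces $D^\beta u$ to be continuous. So an assumption such as $(1+|\xi|^\beta)\widehat u\in L^1$ is genuinely needed for the ``moreover'' part, and general $H^\beta$ data give only $L^2$ convergence, as you say.
\item $\beta<2$ is essential. At $\beta=2$ one gets $s^{-1}G'(s)=-K_0(s)\to-\infty$, and your fallback integral diverges logarithmically at $0$. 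This is worth flagging because the neighbouring lemmas are stated for $\beta\in(1,2]$.
\item The remark in your ``main obstacle'' paragraph is inaccurate. For $\beta>1$ one has $G'(s)\sim-c^{-1}s^{\beta-1}\to0$, so nothing blows up, and term-by-term differentiation of the series for $K_\nu$ would be perfectly legitimate.
\end{itemize}
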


The limit in \eqref{eq:cs_limit} exists whenever $D^\beta u$ is well defined, and one recovers the classical singular integral formula
\begin{equation}\label{eq:gagliardo_singular_integral}
D^\beta u(x)
= c\,  \,\textbf{p.v}\int_{\R} \frac{u(x)-u(z)}{|x-z|^{1+\beta}}\, dz .    
\end{equation}

The characterization in \eqref{eq:cs_limit} will play a fundamental role throughout this work.

\subsection{Some functional identities}

To begin, let $\phi$ be a smooth bump function which equals $1$ on the interval $[-1,1]$ and is supported in $[-2,2]$ and, for $\beta \in (1,2]$, denote by $\Phi = \Phi^\beta(x,z)$ the $\beta$-fractional Caffarelli-Silvestre extension of $\phi$.

\begin{lem}\label{lem: Identidade 1}
Let $\beta \in (1,2]$, $u \in H^{2}(\R)$ and $U = U^\beta(x,z)$ be the $\beta$-fractional extension of $u$.  
Then there exist constants $c_1, c_2 > 0$, depending only on $\beta$, such that
\begin{equation}\label{eq_Phi_uk}
    \int_{\R} \phi\, u^{\,k+1} D^{\beta}u \,dx
    \;=\;
    c_1 \iint_{\R\times\R_{+}} 
        \Phi \, z^{1-\beta} U^{k} |\nabla U|^{2}\,dxdz
    \;+\;
    c_2 \int_{\R} u^{\,k+2} D^{\beta}\phi\,dx .
\end{equation}
\end{lem}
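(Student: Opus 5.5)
The plan is to read the left-hand side as a boundary flux through the Caffarelli--Silvestre characterization (Proposition \ref{limit_neumann}). We then integrate by parts twice in the upper half-plane, each time using that both $U$ and $\Phi$ solve the divergence-form equation \eqref{eq:div_form}.

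\textbf{Step 1: first integration by parts.} By \eqref{eq:cs_limit},
$$\int_\R \phi\,u^{k+1}D^\beta u\,dx=-c\lim_{\varepsilon\to0^+}\int_\R \Phi(x,\varepsilon)U^{k+1}(x,\varepsilon)\,\varepsilon^{1-\beta}\partial_zU(x,\varepsilon)\,dx .$$
We recognize the integral as minus the flux of the vector field $\Phi U^{k+1}z^{1-\beta}\nabla U$ through the line $\{z=\varepsilon\}$, whose outward normal for the region $\{z>\varepsilon\}$ is $-e_z$. The divergence theorem on $\R\times(\varepsilon,\infty)$, combined with $\operatorname{div}(z^{1-\beta}\nabla U)=0$, gives
$$\int_\R \phi\,u^{k+1}D^\beta u\,dx=c\iint z^{1-\beta}\nabla(\Phi U^{k+1})\cdot\nabla U
=c(k+1)\iint \Phi\,z^{1-\beta}U^k|\nabla U|^2+\frac{c}{k+2}\iint z^{1-\beta}\nabla\Phi\cdot\nabla(U^{k+2}).$$

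\textbf{Step 2: second integration by parts.} In the last term we move the gradient onto $\Phi$. Using $\operatorname{div}(z^{1-\beta}\nabla\Phi)=0$, only the boundary term at $z=0$ survives, and it equals
$$-\frac{c}{k+2}\lim_{\varepsilon\to0^+}\int_\R U^{k+2}(x,\varepsilon)\,\varepsilon^{1-\beta}\partial_z\Phi(x,\varepsilon)\,dx=\frac{1}{k+2}\int_\R u^{k+2}D^\beta\phi\,dx,$$
again by Proposition \ref{limit_neumann}, now applied to $\phi$. This produces \eqref{eq_Phi_uk} with $c_1=c(k+1)$ and $c_2=\frac1{k+2}$; both constants are positive and depend only on $\beta$ (and on $k$). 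It is worth noting that only the trace of $U^{k+2}$ enters, so we never need $U^{k+2}$ to be the extension of $u^{k+2}$.

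\textbf{Step 3: justification, the main obstacle.} I would work on the truncated box $Q_{R,\varepsilon}=[-R,R]\times[\varepsilon,R]$ and show that the lateral and top boundary terms vanish as $R\to\infty$, at least along a subsequence. The inputs are the following.
\begin{itemize}
\item Since $u\in H^2\subset L^\infty$, we have $\sup_{z}|U|\le \mathcal M u\in L^\infty$, and $\Phi$ is bounded because it is a Poisson average of $\phi$.
\item By \eqref{euqacao 3.7 caffarelli}, the weighted energies $\iint z^{1-\beta}|\nabla U|^2$ and $\iint z^{1-\beta}|\nabla \Phi|^2$ are finite. With Cauchy--Schwarz, this makes every bulk integrand absolutely integrable; in particular $z^{1-\beta}|\nabla\Phi||\nabla U|\,|U|^{k+1}$ is integrable.
\item The integrability of $z^{1-\beta}(|\nabla U|^2+|\nabla\Phi|^2)$ over the half-plane forces its integral over $\partial Q_{R,\varepsilon}\setminus\{z=\varepsilon\}$ to be small for suitable $R_j\to\infty$, by a Fubini argument in polar-type shells. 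One can also exploit the explicit decay $P_z(x)\lesssim z^\beta(x^2+z^2)^{-(\beta+1)/2}$ together with the formulas for $\partial_xU$ and $\partial_zU$ as convolutions against $\partial_xu$.
\item For the limit $\varepsilon\to0$, the continuity of $U$ and of $z^{1-\beta}\partial_zU$ up to $z=0$ (Proposition \ref{limit_neumann}) is used. To pass the limit inside the $x$-integral, we need a dominating function. Here $\phi$ is compactly supported, but $\Phi$ is not; I expect to control the tail using $\Phi(x,\varepsilon)\lesssim\langle x\rangle^{-1-\beta}$ for small $\varepsilon$ together with $u\in H^2$. Similarly, $U^{k+2}\varepsilon^{1-\beta}\partial_z\Phi$ is dominated using $D^\beta\phi\in L^1$ and the kernel decay.
\end{itemize}
This limiting step is the technical heart of the argument; the algebra in Steps 1--2 is routine.
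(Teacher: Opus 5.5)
Your proposal follows the paper's proof: the same two applications of the divergence theorem on a truncated rectangle $[-L,L]\times[\epsilon,L]$, first to $\Phi z^{1-\beta}U^{k+1}\nabla U$ and then to $U^{k+2}z^{1-\beta}\nabla\Phi$, giving the same constants $c_1=(k+1)c$ and $c_2=\frac{1}{k+2}$. The difference lies in how the lateral and top boundary terms are discarded: the paper does it for every $L$ via $|U|\le\mathcal M u$ with $\mathcal M u(L)\to0$, $|\Phi(\cdot,z)|\lesssim\min(\|\phi\|_{L^\infty},\|\phi\|_{L^1}/z)$ and $|\partial_z P|\lesssim P$ for $z\ge1$, whereas your shell-averaging works only along a sequence $R_j$ and only once you pair it with Cauchy--Schwarz against the weighted length $\int z^{1-\beta}\,d\sigma\lesssim r^{2-\beta}$ (for $\beta<2$) of the lateral and top sides, which yields decay $o(1)\,r^{(1-\beta)/2}$; moreover, if you let $\varepsilon\to0$ on the compact bottom side before sending $L\to\infty$, as the paper does, no dominating function on $\{z=\varepsilon\}$ is needed.
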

Before proving Lemma \ref{lem: Identidade 1}, we briefly comment on its consequences. The following corollary is immediate.

\begin{cor}[Almost non-negativity] Let $\beta \in (1,2]$ and $u \in H^2(\R)$. Then, we have
    \begin{equation}\label{almost_non_negativity} 
    \int \phi\, u^{k+1} D^\beta u\,dx \geq-  c_2 \|u\|^{k+2}_{H^{\frac{\beta}{2}}}\|D^{\beta}\phi\|_{L^\infty}.
\end{equation}
\end{cor}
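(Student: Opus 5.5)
The corollary follows directly from Lemma \ref{lem: Identidade 1}: the first term on the right of \eqref{eq_Phi_uk} is nonnegative, and the second is controlled by Sobolev embedding.

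\textbf{Step 1: sign of the bulk term.} Since $k$ is even, $U^k\geq 0$ pointwise. The weight satisfies $z^{1-\beta}>0$ for $z>0$, and $|\nabla U|^2\geq0$. It remains to see that $\Phi\geq 0$. This holds because $\phi\geq 0$: we take the bump function nonnegative, which is the standard choice and the one we fix here. The extension is given by $\Phi=P_z*\phi$ with the positive Poisson kernel \eqref{eq:poisson_kernel_cs}, so $\Phi\geq0$. Together with $c_1>0$, this gives
\begin{equation}
c_1\iint_{\R\times\R_+}\Phi\, z^{1-\beta}U^k|\nabla U|^2\,dx\,dz\geq 0 .
\end{equation}
The integral is finite, or at worst $+\infty$ with a nonnegative sign, for $u\in H^2$. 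In any case \eqref{eq_Phi_uk} holds as an identity, so this term is finite and nonnegative.

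\textbf{Step 2: bound the boundary term.} By H\"older's inequality,
\begin{equation}
\Bigl|c_2\int_\R u^{k+2}D^\beta\phi\,dx\Bigr|\leq c_2\|D^\beta\phi\|_{L^\infty}\|u\|_{L^{k+2}}^{k+2}.
\end{equation}
Here $\|D^\beta\phi\|_{L^\infty}<\infty$ because $\phi$ is Schwartz, so $\widehat{D^\beta\phi}=|\xi|^\beta\hat\phi\in L^1$.

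\textbf{Step 3: Sobolev embedding.} We bound $\|u\|_{L^{k+2}}\lesssim\|u\|_{H^{\beta/2}}$. Since $\beta>1$, we have $\beta/2>1/2$, so the Sobolev embedding lemma stated earlier applies with $p=k+2\in[2,\infty]$. Combining Steps 1–3 in \eqref{eq_Phi_uk} yields \eqref{almost_non_negativity}, with the implicit Sobolev constant absorbed into $c_2$; this is harmless since $c_2$ depends only on $\beta$ and on the fixed $k$.

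There is no real obstacle. The only points needing care are checking $\Phi\geq0$, which comes from positivity of the Poisson kernel, and the convention that the constant in front of $\|u\|_{H^{\beta/2}}^{k+2}\|D^\beta\phi\|_{L^\infty}$ is allowed to absorb the embedding constant.
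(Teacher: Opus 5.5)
Your proof is correct and is exactly the argument the paper has in mind when it calls this corollary immediate from Lemma \ref{lem: Identidade 1}. You drop the bulk term, which is nonnegative since $k$ is even, $c_1>0$ and $\Phi=P_z*\phi\geq 0$, and you bound the boundary term by H\"older and the embedding $H^{\frac{\beta}{2}}\hookrightarrow L^{k+2}$, absorbing the embedding constant into $c_2$ as the paper tacitly does; your explicit remark that $\phi\geq 0$ is needed for $\Phi\geq 0$ is a worthwhile addition, since the paper leaves it implicit.
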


One can also make $\phi(x) \to 1$ in \eqref{eq_Phi_uk} to obtain:

\begin{cor}[Positivity Lemma]\label{positivity_lemma} Let $\beta \in (1,2]$, $u \in H^2(\R)\backslash\{0\}$, and $ k$ an even integer. Then the following inequality holds.
\begin{equation}\label{positivity_nophi}
    \int u^{k+1} D^\beta u \,dx> 0.
\end{equation}
\end{cor}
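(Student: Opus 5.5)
We plan to obtain Corollary \ref{positivity_lemma} from Lemma \ref{lem: Identidade 1} by replacing $\phi$ with the rescaled cutoff $\phi_R(x)=\phi(x/R)$ and letting $R\to\infty$. This gives
\begin{equation*}
\int_{\R} u^{k+1}D^\beta u\,dx = c_1\iint_{\R\times\R_+} z^{1-\beta}U^k|\nabla U|^2\,dx\,dz,
\end{equation*}
and we then show that the right-hand side is strictly positive.

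\textbf{Scaling.} The Caffarelli--Silvestre extension commutes with dilations, so the extension of $\phi_R$ is $\Phi_R(x,z)=\Phi(x/R,z/R)$. Also $D^\beta\phi_R = R^{-\beta}(D^\beta\phi)(\cdot/R)$. Applying \eqref{eq_Phi_uk} with $\phi_R$ gives
\begin{equation*}
\int \phi_R\, u^{k+1}D^\beta u\,dx = c_1\iint \Phi_R\, z^{1-\beta}U^k|\nabla U|^2\,dx\,dz + c_2 R^{-\beta}\int u^{k+2}(D^\beta\phi)(x/R)\,dx .
\end{equation*}
The identity is linear in $\phi$, so the constants $c_1,c_2$ do not change when $\phi$ is rescaled.

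\textbf{Passing to the limit, term by term.}

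\emph{Left-hand side.} Since $u\in H^2$, we have $u^{k+1}\in L^2$ (because $u\in L^\infty\cap L^2$) and $D^\beta u\in L^2$. So $u^{k+1}D^\beta u\in L^1$, and dominated convergence gives convergence to $\int u^{k+1}D^\beta u$.

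\emph{Error term.} It is bounded by $c_2R^{-\beta}\|D^\beta\phi\|_{L^\infty}\|u\|_{L^{k+2}}^{k+2}\to 0$.

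\emph{Bulk term.} Here $\Phi_R\to 1$ pointwise, since $\Phi(x/R,z/R)$ tends to the value of $\Phi$ at the boundary point $0$, namely $\phi(0)=1$, by continuity of $\Phi$ up to the boundary (Proposition \ref{limit_neumann}). We also have $|\Phi_R|\le \sup|\phi|$ by the maximum principle, since $P_z$ is a positive kernel of unit mass. For domination we need
\begin{equation*}
z^{1-\beta}U^k|\nabla U|^2\in L^1 .
\end{equation*}
This follows from $|U|\le \mathcal M u\le C\|u\|_{L^\infty}$ together with the energy identity \eqref{euqacao 3.7 caffarelli}:
\begin{equation*}
\iint z^{1-\beta}U^k|\nabla U|^2 \le C\|u\|_{L^\infty}^k\|D^{\beta/2}u\|_{L^2}^2<\infty .
\end{equation*}
Dominated convergence then yields the limiting identity. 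This step needs care (continuity of $\Phi$ at the origin and uniform boundedness), but it seems routine.

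\textbf{Strict positivity.} Since $k$ is even, $U^k\ge 0$, so the integral is $\ge 0$. Suppose it vanished. Then $U^k|\nabla U|^2=0$ a.e. in the half-space. $U$ solves a uniformly elliptic equation in the interior, so it is real-analytic there. Hence either $U\equiv 0$ on an open set, or $\nabla U\equiv 0$ on an open set. In both cases analyticity forces $U$ to be constant on the whole connected half-space. Because $u\in L^2$, $U=P_z*u\to 0$ as $z\to\infty$, so that constant is $0$. Then $u=U(\cdot,0)=0$, contradicting $u\neq 0$.

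\textbf{Main obstacle.} I expect the delicate point to be the rigorous justification of the analyticity/unique-continuation step. A more hands-on alternative uses the identity $U^k|\nabla U|^2 = (k/2+1)^{-2}|\nabla(U^{k/2+1})|^2$. Vanishing of this quantity makes $U^{k/2+1}$ constant on the connected half-space, hence $U$ constant (since $k/2+1$ is odd, $t\mapsto t^{k/2+1}$ is injective), hence $0$. This avoids analyticity entirely, and it is the route I would take.
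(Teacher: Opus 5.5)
Your proposal follows the paper's proof. You rescale the cutoff, let the $D^\beta\phi_\lambda$ term decay like $\lambda^{-\beta}$, and pass to the limit in the bulk term by dominated convergence, using the majorant $\|\phi\|_{L^\infty}\|u\|_{L^\infty}^k z^{1-\beta}|\nabla U|^2$ and the energy identity. Your last step also supplies a justification of strict positivity, which the paper merely asserts.

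One slip in that step: $k/2+1$ is odd only when $4\mid k$ (for $k=6$ it equals $4$). Injectivity is not needed, though. $U^{k/2+1}\equiv C$ together with $|U(x,z)|\le \|P_z\|_{L^2}\|u\|_{L^2}\lesssim z^{-1/2}\|u\|_{L^2}\to 0$ forces $C=0$, hence $U\equiv 0$ and $u=0$.
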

\begin{proof}[Proof of Corollary \ref{positivity_lemma}]


For $\lambda\geq 1$, define the rescaled cutoff
$$
\phi_\lambda(x) := \phi\!\left(\frac{x}{\lambda}\right),
$$
and let $\Phi_\lambda$ denote the $\beta$-fractional extension of $\phi_\lambda$. Replacing $\phi$ by $\phi_\lambda$ in \eqref{eq_Phi_uk}, by dominated convergence and Sobolev inequality we get
\begin{equation}
    \lim_{\lambda \to +\infty}\int \phi_\lambda u^{k+1} D^\beta u\,dx = \int u^{k+1} D^\beta u\,dx.
\end{equation}
Moreover,  by Sobolev embedding,

\begin{equation}
    \left|\int u^{k+2}D^{\beta} \phi_\lambda\,dx\right| \lesssim \|u\|_{L^{k+2}}^{k+2} \|D^\beta \phi_\lambda\|_{L^\infty} = 
    \lambda^{-\beta}\|u\|_{L^{k+2}}^{k+2} \|D^\beta \phi\|_{L^\infty}
    \lesssim \lambda^{-\beta}\|u\|_{H^\frac{\beta}{2}} \|D^\beta \phi\|_{L^\infty}.
\end{equation}
Therefore, the last term in \eqref{eq_Phi_uk} vanishes as $\lambda \to \infty$. For the remaining term, note that
\begin{equation}
    \Phi_\lambda(x,z) = \Phi_1\left(\frac{x}{\lambda}, \frac{z}{\lambda}\right) \longrightarrow \phi(0) = 1,
\end{equation}
pointwise, as $\lambda \to +\infty$. This can be seen directly from the homogeneity of the Poisson kernel associated to the fractional extension. Now, in view of \eqref{euqacao 3.7 caffarelli}, and since the integrand is bounded by
\begin{equation}
    \left|\Phi_\lambda\,  z^{1-\beta} U^k |\nabla U|^2\right| \leq
    \|\phi\|_{L^\infty} \|u\|_{H^\frac \beta2}^k \, z^{1-\beta} |\nabla U|^2,
\end{equation}
we have, by dominated convergence, as $\lambda \to +\infty$,
\begin{equation}
 \lim_{\lambda \to +\infty} \iint_{\R\times\R_+} \Phi_\lambda\,  z^{1-\beta} U^k |\nabla U|^2\,dxdz = \iint_{\R\times\R_+}   z^{1-\beta} U^k |\nabla U|^2\,dxdz.
\end{equation}
Thus, if \eqref{eq_Phi_uk} holds, then, for $u \in H^2(\R)\backslash\{0\}$,
\begin{equation}
    \int u^{k+1} D^\beta u \,dx= c_1 \iint_{\R\times\R_+}  z^{1-\beta} U^k |\nabla U|^2\,dxdz > 0.
\end{equation}
\end{proof}
\begin{re}
The previous identity in fact also yields
\begin{equation}\label{eq:frac_chain_rule_like}
    \int u^{\,k+1} D^{\beta}u\,dx
    \;=\; \frac{2}{k+2}
    \int \bigl[D^{\frac{\beta}{2}}\!\bigl(u^{\,\frac{k}{2}+1}\bigr)\bigr]^{2}\,dx,
\end{equation}
which may be interpreted as an ``almost'' fractional chain rule identity.
\end{re}

\begin{re}
 By replacing $\medint\int u D^\beta v$ in Lemma \ref{lem: Identidade 1} and Corollaries \ref{almost_non_negativity} and \ref{positivity_lemma} by the duality pairing $\langle D^\beta u, v \rangle = \medint\int D^{\frac \beta2}u \, D^{\frac \beta2}v$ in $H^{\frac{\beta}{2}}(\R)$, a density argument extends the result to $u$ merely in $H^{\frac \beta2}(\R)$. Since our approach relies on using well-posedness and persistence of regularity to work with smooth solutions, we state our results in $H^2(\R)$ to avoid unnecessary technicalities.
\end{re}

With the observations above, we now turn to the proof of Lemma \ref{lem: Identidade 1}:

\begin{proof}[Proof of Lemma \ref{lem: Identidade 1}: ] We start by noting that, for $u \in H^2(\mathbb{R})$, the functions $\mathcal M(u)$, $\mathcal M(\partial_x u)$ are bounded, respectively, by $\|u\|_{L^\infty}$ and $\|\partial_x u\|_{L^\infty}$ (therefore, by $\|u\|_{H^2}$). Moreover, both functions vanish as $|x|\to+\infty$.

Now, let $L$, $\epsilon > 0$ and consider the rectangle $\Omega^{L,\epsilon} = [-L,L]\times[\epsilon,L]$. Its boundary is given by the intervals
$I^{L,\epsilon}_0 = [-L,L]\times\{\epsilon\}$, $I^{L,\epsilon}_1 = \{L\}\times[\epsilon,L]$, $I^{L,\epsilon}_2 = \{-L\}\times[\epsilon,L]$ and $I_3^L = [-L,L] \times \{L\}$. By the divergence theorem and \eqref{eq:div_form}, we have
\begin{align}
    -c\int_{I_0^{L,\epsilon}} \Phi z^{1-\beta} U^{k+1} \partial_y U 
    &= c\int_{I_0^{L,\epsilon}} \Phi z^{1-\beta} U^{k+1} (\nabla U \cdot n)\,dxdz \\
    &= c \iint_{\Omega^{L,\epsilon}} \text{div}(\Phi z^{1-\beta}  U^{k+1}\nabla U)\,dxdz - c \int_{I_1^{L,\epsilon}\cup I_2^{L,\epsilon}\cup I_3^{L}} \Phi z^{1-\beta} U^{k+1}(\nabla U \cdot n)\,dxdz\\
    &= \frac{c}{k+2} \iint_{\Omega^{L,\epsilon}} z^{1-\beta} \nabla \Phi \cdot \nabla (U^{k+2})\,dxdz + c(k+1) \iint_{\Omega^{L,\epsilon}}  \Phi z^{1-\beta}  U^{k}|\nabla U|^2\,dxdz \\&\quad - c \iint_{\Omega^{L,\epsilon}}  \Phi   U^{k+1}\underbrace{\text{div}(z^{1-\beta}\nabla U)}_{=0}- c \int_{I_1^{L,\epsilon}\cup I_2^{L,\epsilon}\cup I_3^{L}} \Phi z^{1-\beta} U^{k+1} (\nabla U \cdot n)\\
    &= \frac{c}{k+2} \iint_{\Omega^{L,\epsilon}} z^{1-\beta} \nabla \Phi \cdot \nabla (U^{k+2}) + c(k+1) \iint_{\Omega^{L,\epsilon}}  \Phi z^{1-\beta}  U^{k}|\nabla U|^2 \\&\quad - c \int_{I_1^{L,\epsilon}\cup I_2^{L,\epsilon}\cup I_3^{L}} \Phi z^{1-\beta} U^{k+1} (\nabla U \cdot n),
\end{align}
where $c$ is the same constant as in \eqref{limit_neumann}. Since
\begin{align}
     \int_{I_0^{L,\epsilon}} z^{1-\beta} U^{k+2}(\nabla\Phi\cdot n) 
    &=  \iint_{\Omega^{L,\epsilon}} \text{div}(   U^{k+2} z^{1-\beta} \nabla \Phi) -  \int_{I_1^{L,\epsilon}\cup I_2^{L,\epsilon}\cup I_3^{L}}  z^{1-\beta} U^{k+2}(\nabla \Phi \cdot n)\\
    &=  \iint_{\Omega^{L,\epsilon}} z^{1-\beta} \nabla(U^{k+2}) \cdot \nabla \Phi  +  \iint_{\Omega^{L,\epsilon}}U^{k+2} \underbrace{\text{div}(z^{1-\beta} \nabla \Phi)}_{=0}  \\&\quad-  \int_{I_1^{L,\epsilon}\cup I_2^{L,\epsilon}\cup I_3^{L}}  z^{1-\beta} U^{k+2}(\nabla \Phi \cdot n)\\
    &=  \iint_{\Omega^{L,\epsilon}} z^{1-\beta} \nabla(U^{k+2})\cdot \nabla \Phi-  \int_{I_1^{L,\epsilon}\cup I_2^{L,\epsilon}\cup I_3^{L}}  z^{1-\beta} U^{k+2}(\nabla \Phi \cdot n),
\end{align}
we have
\begin{align}
    -c\int_{I_0^{L,\epsilon}} \Phi z^{1-\beta} U^{k+1} \partial_y U 
    &= c(k+1) \iint_{\Omega^{L,\epsilon}}  \Phi z^{1-\beta}  U^{k}|\nabla U|^2 - \frac{c}{k+2}\int_{I_0^{L,\epsilon}} U^{k+2} z^{1-\beta} \partial_y\Phi \\
    &\quad +\frac{c}{k+2}\int_{I_1^{L,\epsilon}\cup I_2^{L,\epsilon}\cup I_3^{L}}  z^{1-\beta} U^{k+2}(\nabla \Phi \cdot n)\\
     &\quad- c \int_{I_1^{L,\epsilon}\cup I_2^{L,\epsilon}\cup I_3^{L}} \Phi z^{1-\beta} U^{k+1} (\nabla U \cdot n).
\end{align}
We now show that we can make $\epsilon \to 0^+$ and $L \to +\infty$. The limit of the double integral follows by dominated convergence. On $I_0^{L,\epsilon}$, the limit follows from the (uniform) continuity of 
$U$, $\Phi$, $z^{1-\beta}\partial_y U$ and $z^{1-\beta}\partial_y \Phi$ on compact sets of $\mathbb{R} \times \overline{\mathbb{R}_+}$ to make $\varepsilon \to 0^+$, followed by dominated convergence for $L \to +\infty$. On $I_1^{L,\epsilon}$, we make use of
\begin{equation}
    \sup_x|\Phi(x,z)|\leq  \sup_x \left|\int P(x-w,z) \phi(z) \, dw \right| \lesssim \sup_x P(x,z) \|\phi\|_{L^1} \lesssim \frac{1}{z}\|\phi\|_{L^1}
\end{equation}
and
\begin{equation}
    \sup_x|\Phi(x,z)|\leq  \sup_x \left|\int P(x-w,z) \phi(z) \, dw \right| \lesssim \|P\|_{L^1} \|\phi\|_{L^\infty} =\|\phi\|_{L^\infty}
\end{equation}
to write
\begin{align}
    \left|\int_\epsilon^L \Phi(L,z) \, z^{1-\beta} U^{k+1}(L,z) \partial_x U(L,z) \, dz\right| 
    &\leq \int_0^\infty|\Phi(L,z)| \, z^{1-\beta} |U^{k+1}(L,z)| |\partial_x U(L,z)| \, dz  \\
    &\quad \lesssim  [\M(u)(L)]^{k+1}\|\partial_x u\|_{L^\infty} \left(\|\phi\|_{L^\infty} \int_0^1 z^{1-\beta} dz+ \|\phi\|_{L^1}\int_1^\infty z^{-\beta} dz\right)\\
    &\quad \lesssim [\M (u)(L)]^{k+1}\|u\|_{H^2}(\|\phi\|_{L^\infty} + \|\phi\|_{L^1})   \to 0,
\end{align}
uniformly in $\varepsilon$, when $L \to \infty$. The remaining integrals on $I^{L,\epsilon}_1$ and $I^{L,\epsilon}_2$ are treated analogously. On $I^{L}_3$, write
\begin{equation}
    \left|\int_{-L}^L \Phi(x,L) \, L^{1-\beta} U^{k+1}(x,L) \partial_y U(x,L) \, dx\right| \lesssim \frac{1}{L^{\beta-1}} \|\phi\|_{L^1} \|u\|^{k+2}_{L^\infty} \to 0,
\end{equation}
as $L \to \infty$, since
for $z \geq 1$, 
\begin{equation}
    |\partial_y P(x,z)| \lesssim P(x,z).
\end{equation}
The remaining integral on $I_3^L$ converges even faster to zero since $\partial_z \Phi$ decays faster than $\Phi$ as $z \to +\infty$. We conclude, for $c_1 =(k+1)c$ and $c_2 = 1/(k+2)$, and using \eqref{eq:cs_limit},
\begin{equation}
     \int \phi u^{k+1} D^\beta u \,dx= c_1 \iint_{\R\times\R_+} \Phi   z^{1-\beta} U^k |\nabla U|^2\,dxdz + c_2 \int u^{k+2}D^{\beta} \phi\,dx.
\end{equation}
\end{proof}

\section{Monotonicity formula}\label{sec-5}

In this section, we develop a monotonicity formula for the equation \eqref{DGBO}. Our approach is inspired by the ideas introduced by Tao in \cite{MR2276483} for the generalized KdV equation (corresponding to the case $\alpha = 2$ in \eqref{DGBO}), which were subsequently adapted by Dodson \cite{MR3625190}, in the mass-critical  and by Farah, Linares, Pastor, and Visciglia \cite{MR3772197} in the mass-supercritical case, and by Kim and Kwon for the defocusing generalized Benjamin–Ono equation in \cite{MR4009456} (the case $\alpha = 1$ in \eqref{DGBO}).
\begin{de}[Mass density and mass current] 
Let $\alpha>0$, $k \in \mathbb{Z}^+$, and $v \in H^\alpha(\R)$. 
The {mass density} is defined by
$$
    \rho[v] := v^2
$$
and the corresponding {mass current} is given by
$$
    j[v] := (\alpha+1)\, v D_x^\alpha v \;+\; \frac{2(k+1)}{k+2}\, v^{k+2}.
$$
\end{de}

\begin{de}[Energy density and energy current] 
Let $\alpha>0$, $k \in \mathbb{Z}^+$, and $v\in H^{2\alpha}(\R)$. 
The {energy density} is defined by 
$$
    e[v] := \frac{1}{2}\, v D_x^\alpha v \;+\; \frac{1}{k+2}\, v^{k+2},
$$
and the corresponding {energy current} is given by
$$
    \kappa[v] := \frac{\alpha+1}{2}\, v D_x^{2\alpha} v 
    \;+\; \frac{\alpha+2}{2}\, v^{k+1} D_x^\alpha v
    \;+\; \frac{1}{2}\, v^{2k+2}.
$$
\end{de}

\begin{re}
If $u=u(t,x)$ is a sufficiently regular solution of \eqref{DGBO}, we adopt the notational convention
$$
    \rho(t,x) = \rho[u(t,x)],
$$
and similarly for $e(t,x)$, $j(t,x)$, and $\kappa(t,x)$.
\end{re}

\begin{prop}\label{proposicao da massa e energia}
Let $\alpha \in (1,2]$, $k \geq 4$, and let $u=u(t,x)$ be a sufficiently regular solution of \eqref{DGBO}. 
Then the following identities hold:
\begin{equation}
    \partial_t \int x\hspace{0.2mm} \rho(t,x)\, dx = \int j(t,x)\, dx,
\end{equation}
and
\begin{equation}
    \partial_t \int x\hspace{0.2mm} e(t,x)\, dx = \int \k(t,x)\, dx.
\end{equation}
\end{prop}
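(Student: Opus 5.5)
The plan is a direct computation: differentiate under the integral sign, substitute the equation $\partial_t u = -D^\alpha\partial_x u - \partial_x(u^{k+1})$, and reduce every weighted term to an unweighted one. Linear terms will be handled with the commutator of multiplication by $x$ with a Fourier multiplier, and nonlinear terms by ordinary integration by parts. "Sufficiently regular" will mean $u\in C_t H^{s}$ for large $s$ with $xu\in C_tL^2$. I will perform the computation for Schwartz-class data and pass to the limit using the well-posedness of Theorem \ref{Teorema existencia critica} and persistence of regularity. Alternatively, I can replace $x$ by a truncated weight $x\psi(x/R)$ and let $R\to\infty$.

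The basic tool is the following. For a multiplier $m(D)$ with $\widehat{m(D)f}=m(\xi)\widehat f$, one has $\widehat{xf}=i\partial_\xi\widehat f$, and therefore
\begin{equation}
[x,m(D)] = (i\,m')(D).
\end{equation}
For $L:=D^\alpha\partial_x$, with symbol $i\xi|\xi|^\alpha$, this gives $[x,L]=-(\alpha+1)D^\alpha$. For $D^\alpha$ itself it gives $[x,D^\alpha]=\alpha D^{\alpha-2}\partial_x$, which is a well-defined operator of order $\alpha-1>0$ since $\alpha>1$.

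\textbf{Mass identity.} We have $\partial_t\int x u^2 = -2\int xu\,Lu - 2\int xu\,\partial_x(u^{k+1})$. Since $L$ is skew-adjoint,
\begin{equation}
I:=\int xu\,Lu = -\int L(xu)\,u = -\int \bigl(xLu+(\alpha+1)D^\alpha u\bigr)u = -I-(\alpha+1)\int uD^\alpha u,
\end{equation}
so $-2I=(\alpha+1)\int uD^\alpha u$. For the nonlinear term, integration by parts gives
\begin{equation}
-2\int xu\,\partial_x(u^{k+1}) = 2\int u^{k+2} + 2\int x u^{k+1}u_x = 2\int u^{k+2} - \tfrac{2}{k+2}\int u^{k+2} = \tfrac{2(k+1)}{k+2}\int u^{k+2}.
\end{equation}
Together these give exactly $\int j$.

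\textbf{Energy identity.} I split $\int xe$ into its quadratic part and its potential part.

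For the quadratic part, self-adjointness of $D^\alpha$ gives
\begin{equation}
\partial_t\int \tfrac12 xuD^\alpha u = \tfrac12\int u_t\bigl(xD^\alpha u + D^\alpha(xu)\bigr) = \int u_t\, xD^\alpha u - \tfrac{\alpha}{2}\int u_t\, D^{\alpha-2}\partial_x u .
\end{equation}

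For the potential part, $\partial_t\int \frac{x u^{k+2}}{k+2} = \int x u^{k+1}u_t$. Combining the two parts,
\begin{equation}
\partial_t\int xe = \int x\,u_t\,(D^\alpha u+u^{k+1}) - \tfrac{\alpha}{2}\int u_t\,D^{\alpha-2}\partial_x u .
\end{equation}
Writing $w:=D^\alpha u+u^{k+1}$, the equation reads $u_t=-\partial_x w$. The first term then becomes $-\int xw\,\partial_x w=\frac12\int w^2$, which expands to
\begin{equation}
\tfrac12\int (D^\alpha u)^2+\int u^{k+1}D^\alpha u+\tfrac12\int u^{2k+2}.
\end{equation}

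The second term is $\frac{\alpha}{2}\int \partial_x w\, D^{\alpha-2}\partial_x u$. I split it into a linear and a nonlinear piece.

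The linear piece is $\frac\alpha2\int \partial_xD^\alpha u\,D^{\alpha-2}\partial_x u=\frac\alpha2\int (D^\alpha u)^2$, by Plancherel, since $\xi^2|\xi|^{2\alpha-2}=|\xi|^{2\alpha}$.

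The nonlinear piece is
\begin{equation}
\tfrac\alpha2\int\partial_x(u^{k+1})D^{\alpha-2}\partial_x u=-\tfrac\alpha2\int u^{k+1}D^{\alpha-2}\partial_x^2u=\tfrac\alpha2\int u^{k+1}D^\alpha u,
\end{equation}
using $D^{\alpha-2}\partial_x^2=-D^\alpha$.

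Summing everything, and using $\int(D^\alpha u)^2=\int uD^{2\alpha}u$, yields
\begin{equation}
\tfrac{\alpha+1}{2}\int uD^{2\alpha}u+\tfrac{\alpha+2}{2}\int u^{k+1}D^\alpha u+\tfrac12\int u^{2k+2}=\int\kappa .
\end{equation}

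\textbf{Main obstacle.} The algebra is routine; the delicate point is justification. The operator $[x,D^\alpha]=\alpha D^{\alpha-2}\partial_x$ has a symbol that is non-smooth at $\xi=0$, and the terms $\int xw\,\partial_x w$ and $\int xuLu$ require $xu$ and $xD^\alpha u$ to lie in $L^2$ along the flow. Note that $D^\alpha$ destroys Schwartz decay, because $|\xi|^\alpha$ is singular at the origin.

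My first approach is the truncated weight $\psi_R(x)=x\chi(x/R)$. I would repeat the computation with $[\psi_R,m(D)]$ in place of $[x,m(D)]$. The errors involve $\psi_R'-1$, which is supported in $|x|\gtrsim R$, together with commutator remainders bounded by $R^{-1}$ times Sobolev norms. These can be controlled by the Kato–Ponce-type commutator estimates \cite{KatoPonce1988,Li2019} and vanish as $R\to\infty$. Conveniently, only unweighted quantities survive in the limit, which is exactly why the identities are meaningful for $H^{\alpha}$ (respectively $H^{2\alpha}$) solutions.
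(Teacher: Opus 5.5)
Your computation is correct and follows essentially the same route as the paper: you differentiate, substitute the equation, remove the weight $x$ from the linear terms via the Fourier-side commutator, and integrate the nonlinear terms by parts (the paper's identity $[D^\beta_x;x]\partial_x v=\beta D^\beta_x v$ is a special case of your $[x,m(D)]=(im')(D)$). The differences are only in bookkeeping---you group the energy terms through $w=D^\alpha u+u^{k+1}$ and use $[x,D^\alpha]=\alpha D^{\alpha-2}\partial_x$ rather than treating the six terms one by one---and your discussion of how to justify the formal computation goes beyond what the paper provides.
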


\begin{proof}
We first consider the energy density. An explicit computation gives


\begin{align}
\label{termoA}\partial_t\int x \hspace{0.2mm} e(t,x) \, dx = &-\frac{1}{2}\int x D_x^\alpha \partial_x uD^\alpha_x u \, dx -\frac{1}{2}\int x uD_x^{2\alpha} \partial_x u\, dx\\
 \label{termoB}&-\frac{1}{2}\int x \partial_x(u^{k+1}) D^\alpha_x u\, dx -\frac{1}{2}\int x uD^\alpha_x\partial_x(u^{k+1})\, dx -\int x u^{k+1}D_x^\alpha \partial_x u\, dx\\
\label{termoC}& -\int x u^{k+1}\partial_x(u^{k+1})\, dx.
\end{align}




We begin with \eqref{termoA}. First note that
$$
D^{\alpha}_x \partial_x u \, D^{\alpha}_x u = \frac{1}{2}\partial_x[(D^{\alpha}_xu)^2].
$$

Moreover, for $\beta >0$ and $v \in \mathcal{S}(\mathbb R)$, by computing Fourier transforms, one has the identity 
\begin{equation}\label{identidade}
[D^\beta_x; x]\partial_x v= \beta D^\beta_x v, 
\end{equation}
which implies
\begin{equation}\label{identidade_2}
    \int x v D^\beta_x \partial_x v\, dx = - \frac{(\beta+1)}{2}\int v D^\beta_x v\, dx .
\end{equation}

We then have
\begin{equation}
\eqref{termoA} = \frac{\alpha+1}{2} \int u D^{2\alpha}_x u \, dx.
\end{equation}

For \eqref{termoB}, observe that, integrating by parts 
\begin{align}
-\frac{1}{2}\int x \partial_x(u^{k+1}) D^\alpha_x u\, dx&=\frac{1}{2} \int u^{k+1} D^\alpha_x u\, dx +\frac{1}{2}\int x u^{k+1}D_x^\alpha \partial_x u\, dx,
\end{align}
and, integrating by parts and using the identity \eqref{identidade},
\begin{align}
-\frac{1}{2}\int x uD^\alpha_x\partial_x(u^{k+1})\, dx 
 &=   \frac{1}{2}\int u^{k+1} D^\alpha_x u\, dx + \frac{1}{2} \int u^{k+1} [D^\alpha_x;x] \partial_x u\, dx + \frac12 \int x u^{k+1}D_x^\alpha \partial_x u\, dx\\
 &= \frac{\alpha + 1}{2} \int u^{k+1} D^\alpha_x u\, dx +\frac12 \int x u^{k+1}D_x^\alpha \partial_x u\, dx.
\end{align}
Therefore
\begin{equation}
\eqref{termoB}= \frac{\alpha+2}{2} \int u^{k+1} D^\alpha_x u\, dx.
\end{equation}

To conclude, for \eqref{termoC}, since $u^{k+1} \partial_x (u^{k+1}) =\frac{1}{2}\partial_x[(u^{k+1})^2] $,
\begin{align}
\eqref{termoC}&= \frac{1}{2} \int u^{2k+2}\, dx.
\end{align}
Therefore, we obtain the relation
\begin{equation}
\partial_t\int x e(t,x) \, dx =\int \k(t,x)\, dx.
\end{equation}

For the mass density, observe that
\begin{align}
\partial_t \rho 
= -2 u D^\alpha_x \partial_x u - 2u\partial_x(u^{k+1}).
\end{align}
Therefore 
\begin{align}\label{termoI}
\int x \partial_t \rho(t,x) \, dx 
= -2\int x u D_x^\alpha \partial_xu \, dx
-2\int x u\partial_x(u^{k+1})  \, dx.
\end{align}
Then, by \eqref{identidade} and integration by parts,
\begin{equation}
    \partial_t\int x  \rho(t,x) \, dx = \int j(t,x)\, dx.
\end{equation}
\end{proof}
Observe that, by Proposition \ref{proposicao da massa e energia}
\begin{equation}\label{formula de monoticidade I}
    \partial_t \iint (y-x) \rho(t,x) e(t,y)\,dx\,dy = \left(\int \rho  \int \k-\int j \int e \right)(t).
\end{equation}

We now derive the monotonicity formula, which guarantees that the expression in \eqref{formula de monoticidade I} exhibits a suitable form of coercivity. More precisely, the monotonicity formula captures the nature of the propagation: it shows that the center of energy travels to the right strictly faster than the center of mass.

\begin{teo}[Monotonicity formula]\label{teo_monotonicity}
Let $v \in H^{2\alpha}(\R)$. Then we have
$$
\int \rho[v]\,dx  \int \k[v]\,dx - \int e[v]\,dx\int j[v]\,dx  \geq\;\; 
\frac{k^2}{2 (k+2)^2} \,  \, \ \left(\int {v}^{k+2} \, dx\right)^2.
$$
\end{teo}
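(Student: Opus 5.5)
The plan is to expand both sides in terms of a few scalar quantities and close with a single Cauchy--Schwarz inequality, using the positivity lemma (Corollary \ref{positivity_lemma}) for the one term whose sign is not obvious. For $v\in H^{2\alpha}(\R)$ I set
\begin{equation}
M=\int v^2,\quad A=\int vD^\alpha v,\quad B=\int vD^{2\alpha}v=\int (D^\alpha v)^2,\quad P=\int v^{k+2},\quad Q=\int v^{k+1}D^\alpha v,\quad R=\int v^{2k+2}.
\end{equation}
Since $D^\alpha$ is self-adjoint, $B=\int (D^\alpha v)^2$. Corollary \ref{positivity_lemma}, extended to this regularity by the density remark following it, gives $Q\geq 0$.

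From the definitions of $\rho,e,j,\k$, the left-hand side of Theorem \ref{teo_monotonicity} equals
\begin{equation}
\frac{\alpha+1}{2}MB+\frac{\alpha+2}{2}MQ+\frac12 MR-\frac{\alpha+1}{2}A^2-\frac{k+\alpha+2}{k+2}AP-\frac{2(k+1)}{(k+2)^2}P^2 .
\end{equation}
The difficulty is the cross term $AP$, which has no sign. I will absorb it by applying Cauchy--Schwarz to the combined function $D^\alpha v+a v^{k+1}$, with a real parameter $a$ to be chosen:
\begin{equation}
M\int (D^\alpha v+a v^{k+1})^2\geq \Big(\int v(D^\alpha v+av^{k+1})\Big)^2 .
\end{equation}
Expanded, this reads $MB+2aMQ+a^2MR\geq A^2+2aAP+a^2P^2$.

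I choose $a$ so that $(\alpha+1)a=\frac{k+\alpha+2}{k+2}$; this matches the $AP$ coefficient after multiplying the inequality by $\frac{\alpha+1}{2}$. Subtracting the resulting inequality from the expression above cancels the $MB$, $A^2$ and $AP$ terms. Three remainders are left:
\begin{itemize}
\item a term $\big(\tfrac{\alpha+2}{2}-\tfrac{k+\alpha+2}{k+2}\big)MQ$, whose coefficient equals $\frac{\alpha k}{2(k+2)}\geq0$, so the term is nonnegative because $Q\geq0$;
\item a term $\tfrac12\big(1-(\alpha+1)a^2\big)MR$;
\item a term $\big(\tfrac{(\alpha+1)a^2}{2}-\tfrac{2(k+1)}{(k+2)^2}\big)P^2$.
\end{itemize}
The $MR$ remainder is handled by the elementary Cauchy--Schwarz bound $P^2=\big(\int v\cdot v^{k+1}\big)^2\leq MR$, provided its coefficient is nonnegative. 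In that case the total $P^2$ coefficient becomes $\frac12-\frac{2(k+1)}{(k+2)^2}=\frac{k^2}{2(k+2)^2}$, which is exactly the claimed constant.

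The only step needing verification is $(\alpha+1)a^2\leq1$, which amounts to $(s+\alpha)^2\leq(\alpha+1)s^2$ with $s=k+2$. This is equivalent to $2s+\alpha\leq s^2$, which holds since $s\geq6$ and $\alpha\leq2$.

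The main conceptual obstacle is the sign of $Q$. This is precisely where the Caffarelli--Silvestre identity of Lemma \ref{lem: Identidade 1} enters, through Corollary \ref{positivity_lemma}. Everything else is algebra, together with justifying the expansions for $v\in H^{2\alpha}$, which makes all the integrals finite.
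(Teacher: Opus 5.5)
Your proof is correct, and it organizes the algebra differently from the paper.

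The paper divides by $M[v]^2$. It writes the left-hand side minus $\frac{k^2}{2(k+2)^2}P^2$ as a quadratic form in $\bigl(\|D^\alpha v\|_{L^2},\|v^{k+1}\|_{L^2}\bigr)/\|v\|_{L^2}$, whose coefficients are built from the cosines $q,r,s$ between $v$, $D^\alpha v$ and $v^{k+1}$. It then proves nonnegativity on the positive quadrant using three ingredients:
\begin{itemize}
\item the $3\times3$ Gram determinant, in the form $qr-s\le\sqrt{(1-q^2)(1-r^2)}$;
\item the signs $q,r,s\ge0$;
\item the two numerical facts $\frac{k+\alpha+2}{\sqrt{\alpha+1}(k+2)}\le1\le\frac{\alpha+2}{2\sqrt{\alpha+1}}$.
\end{itemize}

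You instead subtract one Cauchy--Schwarz inequality for the pair $v$, $D^\alpha v+av^{k+1}$, with the tuned value $a=\frac{k+\alpha+2}{(\alpha+1)(k+2)}$. This removes the $MB$, $A^2$ and $AP$ terms at once, after which you only need $P^2\le MR$ and $Q\ge0$. Your condition $(k+2+\alpha)^2\le(\alpha+1)(k+2)^2$ is exactly the paper's first numerical inequality, which is the source of its threshold $k\ge\sqrt3-1$. The paper's second inequality is replaced in your argument by the explicit leftover coefficient $\frac{\alpha k}{2(k+2)}$ in front of $MQ$.

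Your route is shorter and avoids both the normalization and Sylvester's criterion. It also never uses the signs of $A=\|D^{\frac\alpha2}v\|_{L^2}^2$ or $P$. (These are in fact nonnegative for even $k$, so $AP$ does have a sign, just the unfavorable one.) The paper's route, in exchange, exhibits the full quadratic structure and shows where the slack sits.

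One minor point: no density argument is needed to apply Corollary \ref{positivity_lemma}, since $\alpha>1$ already gives $H^{2\alpha}(\R)\subset H^2(\R)$.
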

\begin{proof} First, observe that 
\begin{align}
\eqref{formula de monoticidade I} & =\left(\int v^2\, dx\right) \left(\int \frac{\alpha+ 1}{2}  v D_x^{2\alpha} v
+ \frac{\alpha + 2}{2}   v^{k+1} D^\alpha_x v 
+  \frac{1}{2} v^{2k+2} \, dx\right)\\
& \quad\quad\quad\quad\quad\quad-\left(\int \frac{1}{2}vD^\alpha_x v +\frac{1}{k+2} v^{k+2} \, dx \right)\left(\int (\alpha+1)\,v D^\alpha_x v + \frac{2(k+1)}{k+2} v^{k+2}\, dx \right)\\
&= (M[v])^2\left[\left( \frac{\alpha+ 1}{2} a^2
+ \frac{\alpha + 2}{2}  abs
+  \frac{1}{2} b^2\right)- 
\left(\frac{1}{2}aq +\frac{1}{k+2} br \right)\left( (\alpha+1)aq + \frac{2(k+1)}{k+2} br \right)\right],
\end{align}
where, by using the $L^2$ norm and inner product, we defined
\[
a = \frac{\|D^\alpha v\|_{L^2}}{\|v\|_{L^2}},\quad b= \frac{\|v^{k+1}\|_{L^2}}{\|v\|_{L^2}},
\]
and
\[
q= \frac{\langle v,D^\alpha_x v\rangle}{ \|v\|_{L^2}\|D^\alpha_x v\|_{L^2}},\quad
%
r=   \frac{\langle v,v^{k+1}\rangle}{ \|v\|_{L^2}\|v^{k+1} \|_{L^2}},\quad
s=  \frac{\langle v^{k+1},D^\alpha_xv\rangle}{ \|v^{k+1}\|_{L^2}\|D^\alpha_x v\|_{L^2}}.
\]

Observe that, by Cauchy-Schwarz, 
$q,r,s \leq 1$. Moreover, $q,r \geq 0$ immediately, whilst we have $s\geq 0$ due to \eqref{positivity_nophi}.
We write
\begin{align}
\frac{\eqref{formula de monoticidade I}}{(M[v])^2}-
\frac{k^2}{2(k+2)^2} r^2 b^2
= 
\renewcommand{\arraystretch}{0.8}\begin{bmatrix} a & b \end{bmatrix}
\begin{bmatrix}
\tfrac{\alpha+1}{2}(1-q^2) 
& 
\tfrac{\alpha+2}{4}s - \tfrac{k+\alpha+2}{2(k+2)}qr
\\[10pt]
\tfrac{\alpha+2}{4}s - \tfrac{k+\alpha+2}{2(k+2)}qr 
& 
\tfrac{1}{2}(1-r^2)
\end{bmatrix}
\begin{bmatrix} a \\ b \end{bmatrix}
\end{align}

Hence, it suffices to show that the bilinear form defined above, restricted to vectors with nonnegative entries, is nonnegative.  Since $0\leq q, r \leq 1$, we only need to verify
\begin{align}
 \sqrt{(\alpha+1)(1-q^2)(1-r^2)} \geq \sqrt{\alpha+1} (qr-s)\geq \dfrac{k+\alpha+2}{k+2}qr -\dfrac{\alpha+2}{2}s.
\end{align}

To proceed, we recall that 
$$
\begin{bmatrix}
1 & q & r \\
q & 1 & s \\
r & s & 1
\end{bmatrix}
$$
is, up to normalization, the Gramian matrix of $v$, $D_x^{\frac\alpha2}v$, and $v^{k+1}$, hence, positive semidefinite. By Sylvester's criterion, 
%
$
1 - q^{2} - r^{2} - s^{2} + 2qrs \geq 0
$, which implies
$$
qr- s \leq \sqrt{(1-q^2)(1-r^2)}.
$$
Since $\alpha \in (1,2]$ and $k \geq \sqrt{3}-1$, we have
$$
\frac{k + \alpha+2}{\sqrt{\alpha+1}(k+2) } \leq 1 \quad \text{and} \quad \frac{\alpha+2}{2 \sqrt{\alpha+1}} \geq 1,
$$
which implies, since $q,r,s\geq 0$,
\begin{align}
    \left( \frac{k + \alpha+2}{k+2 }\right)qr - \left( \frac{\alpha+2}{2 }\right)s \leq \sqrt{\alpha+1} (qr- s),
\end{align}
and concludes the proof.
\end{proof}

\section{Rigidity}\label{sec-7}
Throughout this section, we let $u$ be a global solution to \eqref{DGBO} with initial data $u_0$ and define
\begin{equation}
    A := M[u_0]+E[u_0].
\end{equation}
Note that, by conservation of mass and energy, we have $\|u\|_{L^\infty_t H^{\frac{\alpha}{2}}_x} \lesssim_A 1$

Due to the persistence of regularity and to the well-posedness theory, we can rely on density arguments and assume that the solution $u$ is smooth as long as all the bounds depend only on $A$.

\subsection{Localized Interaction Functional}
Let $R>1$ and $\varphi_R$ be a bump function such that $0\leq \varphi_R \leq 1$, $\varphi_R(x) = 1$ if $|x| \leq R$ and $\varphi_R(x) = 0$ if $|x| \geq R+ R^{3/4}$. We can impose $\|\partial^{j}\varphi_R\|_{L^\infty} \lesssim_{j} R^{-3j/4}$ for every positive integer $j$, which implies
\begin{equation}
    \|D^{\beta}\varphi_R\|_{L^2}+\|\mathcal H D^{\beta}\varphi_R\|_{L^2} \lesssim_{\beta}{R^{-1/4}}
\end{equation}
for all $\beta \in [1,+\infty)$.

We also define 
\begin{equation}\label{eq:def_psi_R}
\psi_R(x) = \displaystyle\int_0^x \varphi_R^2 * \varphi_R^2 (\ell) \, \frac{d\ell}{R},    
\end{equation}
so that one can write
\begin{equation}\label{psi_prime}
    \psi'_R(y-x) = \int_{-\infty}^{+\infty} \varphi_R^2(x-\ell)\varphi^2_R(y-\ell) \frac{d\ell}{R}.
\end{equation}
For a solution $u$ to \eqref{DGBO}, define the interaction functional
\begin{equation}\label{Z_def}
    Z_R(t) := \iint \psi_R(y-x) \rho(t,x)\,e(t,y)\, dx dy,
\end{equation}
where $\rho(t,x) = \rho[u(t,x)]$ and $e(t,x)=e[u(t,x)]$ denote the mass and energy densities, respectively. 

To control $Z_R$, we need to circumvent the fact that $e$ might not be pointwise positive (in fact, it is not clear whether $\|e\|_{L^1}$ is even finite). Localization with smooth functions is then crucial here.
\begin{lem}\label{localization_lemma}
Let $0\leq \beta\leq 2$, $f \in H^{\beta}(\R)$ and $\phi \in C^1(\R)$ with
\begin{equation}
\|\phi\|_{W^{1,\infty}} := \|\phi\|_{L^\infty} + \|\partial_x \phi\|_{L^\infty} <+\infty.
\end{equation}
Then,
\begin{equation}
\left|\int \phi \,f\, D^\beta_x f \right|  \lesssim \|\phi\|_{W^{1,\infty}} \|f\|_{H^{\frac{\beta}{2}}}^2 .    
\end{equation}
In particular, if $u \in C_t H^{\beta}_x(\R\times \R)$, then
\begin{equation}
\sup_{t \in \mathbb{R}} |Z_R(t)| \lesssim_A R .
\end{equation}
\end{lem}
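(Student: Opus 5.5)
The bound $|\int \phi f D^\beta f| \lesssim \|\phi\|_{W^{1,\infty}}\|f\|_{H^{\beta/2}}^2$ is a commutator estimate; the bound on $Z_R$ then follows from it together with a convolution representation of $\psi_R$. To prove the first bound, move half of the derivative onto the other factor:
\begin{equation}
\int \phi f D^\beta f\,dx = \int D^{\frac\beta2}(\phi f)\, D^{\frac\beta2} f\,dx = \int \phi\,\bigl(D^{\frac\beta2}f\bigr)^2\,dx + \int [D^{\frac\beta2},\phi]f\; D^{\frac\beta2}f\,dx .
\end{equation}
The first term is bounded by $\|\phi\|_{L^\infty}\|f\|_{\dot H^{\beta/2}}^2$. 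For the second term it suffices to prove
\begin{equation}
\|[D^{\frac\beta2},\phi]f\|_{L^2}\lesssim \|\phi\|_{W^{1,\infty}}\|f\|_{L^2}\qquad\text{for } 0\le \tfrac\beta2\le 1 .
\end{equation}

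\textbf{Endpoint cases.} When $\beta/2=0$ the commutator vanishes. When $\beta/2=1$, $D=\mathcal H\partial_x$, so
\begin{equation}
[D,\phi]f=\mathcal H(\phi' f)+[\mathcal H,\phi]\partial_x f .
\end{equation}
The first piece is bounded by $\|\phi'\|_{L^\infty}\|f\|_{L^2}$. The second is the first Calderón commutator, which is bounded on $L^2$ with norm $\lesssim\|\phi'\|_{L^\infty}$.

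\textbf{Intermediate range.} For $0<\beta/2<1$ I would use the singular integral representation \eqref{eq:gagliardo_singular_integral}:
\begin{equation}
[D^{\frac\beta2},\phi]f(x)=c\int \frac{(\phi(y)-\phi(x))f(y)}{|x-y|^{1+\beta/2}}\,dy .
\end{equation}
The numerator satisfies $|\phi(y)-\phi(x)|\le \min(\|\phi'\|_\infty|x-y|,\,2\|\phi\|_\infty)$. Hence the kernel is dominated by
\begin{equation}
\|\phi\|_{W^{1,\infty}}\min\bigl(|x-y|^{-\beta/2},\,|x-y|^{-1-\beta/2}\bigr),
\end{equation}
which is integrable in one dimension because $\beta/2<1$. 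Schur's test (or Young's inequality) then gives the $L^2$ bound. The point to handle carefully is that near the diagonal the principal value is only needed in the undifferenced term $\phi\,D^{\beta/2}f$, so the commutator kernel is absolutely integrable.

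\textbf{Bound on $Z_R$.} Write $\rho=u^2\ge0$ and $e=\tfrac12 uD^\alpha u+\tfrac1{k+2}u^{k+2}$. Since $\psi_R$ is not compactly supported, I would not localize with $\psi_R$ itself. Instead, I would use the representation \eqref{psi_prime} and the fact that $\psi_R'\ge0$ with $\|\psi_R'\|_{L^1}\lesssim R$. Then $|\psi_R(y-x)|\le \|\psi_R'\|_{L^1}\lesssim R$, with $\psi_R$ smooth and increasing.

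For fixed $x$, set $\phi_x(y)=\psi_R(y-x)$. Then $\|\phi_x\|_{L^\infty}\lesssim R$ and $\|\phi_x'\|_{L^\infty}\le \|\varphi_R^2*\varphi_R^2\|_{L^\infty}/R\lesssim 1$, so $\|\phi_x\|_{W^{1,\infty}}\lesssim R$. Applying the first part with $\beta=\alpha$ and $f=u(t)$, and bounding the potential term by $\|\phi_x\|_\infty\|u\|_{L^{k+2}}^{k+2}\lesssim_A R$ via Sobolev embedding, gives
\begin{equation}
\Bigl|\int\psi_R(y-x)e(t,y)\,dy\Bigr|\lesssim_A R
\end{equation}
uniformly in $x$ and $t$. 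Integrating against $\rho(t,x)\,dx$ and using mass conservation yields $|Z_R(t)|\lesssim_A R$.

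The main obstacle I expect is the commutator estimate, specifically getting a bound that depends only on $\|\phi\|_{W^{1,\infty}}$ (no higher derivatives of $\phi$ and no decay of $\phi$). The kernel argument above handles this directly for $\beta<2$, and the Calderón commutator covers $\beta=2$.
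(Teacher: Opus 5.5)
Your argument is correct, but it takes a different route from the paper's. The paper also starts from $\int\phi fD^\beta f=\int D^{\beta/2}(\phi f)\,D^{\beta/2}f$. It then bounds this crudely by $\|\phi f\|_{H^{\beta/2}}\|f\|_{H^{\beta/2}}$. It shows that multiplication by $\phi$ is bounded on $H^{\beta/2}$ by complex interpolation between the trivial $L^2$ bound and the $H^1$ bound from the Leibniz rule.

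You instead split off the main term $\int\phi\,(D^{\beta/2}f)^2$ and prove that $[D^{\beta/2},\phi]$ is bounded on $L^2$ with norm $\lesssim\|\phi\|_{W^{1,\infty}}$. For $\beta<2$ you use an absolutely integrable kernel and Schur's test, and at $\beta=2$ the Calder\'on commutator. This gives more structural information: the error term loses all derivatives and costs only $\|f\|_{L^2}\|f\|_{\dot H^{\beta/2}}$.

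It is, however, heavier than the lemma requires. At $\beta=2$ you do not need Calder\'on's theorem at all, since $\int D(\phi f)\,Df=\int(\phi f)'f'=\int\phi' ff'+\int\phi (f')^2$ by Plancherel. Also, your Schur constant grows like $(1-\beta/2)^{-1}$, so it degenerates as $\beta\to2^-$. This is harmless here because $\beta=\alpha$ is fixed, whereas the interpolation argument gives a constant uniform in $\beta$.

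You also supply the deduction of $\sup_t|Z_R(t)|\lesssim_A R$, which the paper leaves implicit, and your argument is sound. The key facts are $|\psi_R|\le\|\psi_R'\|_{L^1}\lesssim R$ and $\|\psi_R'\|_{L^\infty}\lesssim 1$, so $\|\psi_R(\cdot-x)\|_{W^{1,\infty}}\lesssim R$ uniformly in $x$. You then apply the first bound in $y$ for each fixed $x$, control the potential term by Sobolev embedding, and integrate against $\rho$ using mass conservation. The Fubini step is legitimate because $uD^\alpha u\in L^1$ when $u(t)\in H^\alpha$.
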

\begin{proof}
    It is enough to show $\|\phi f\|_{H^{\frac{\beta}{2}}}\lesssim \|\phi\|_{W^{1,\infty}} \|f\|_{H^{\frac{\beta}{2}}}$. Note that 
    \begin{equation}
        \|\phi f\|_{L^2}\lesssim \|\phi\|_{L^\infty} \|f\|_{L^2},
    \end{equation}
    and, by the Leibniz rule,
    \begin{align}
        \|\phi f\|_{H^1} \lesssim \|\phi\|_{W^{1,\infty}} \|f\|_{H^1}.
    \end{align}
    Complex interpolation then gives the result, since $0\leq\frac\beta2\leq 1$.
\end{proof}

Upon differentiating \eqref{Z_def} in time, one has
\begin{equation}
    Z_R'(t) = \underbrace{\iint \psi_R(y-x) \rho(t,x)\,\partial_te(t,y)\, dy dx}_{Z_1(t)} + \underbrace{\iint \psi_R(y-x) \partial_t\rho(t,x)\,e(t,y)\, dx dy}_{Z_2(t)} . 
\end{equation}

To proceed, we claim the following technical results, which provide control of $Z_R^\prime(t)$.

\begin{lem}\label{lem_Z1_bound}
   For all $\varepsilon >0$, there exists $C_\varepsilon >0$ such that the following bound on $Z_1$ holds, for all $t\geq 0$:
   \begin{align}
    Z_1(t) \geq{(1-\varepsilon)} \iint \rho[\varphi_R(x-\ell)u(t,x)]
\, dx \int \k[\varphi_{R}(y-\ell)u(t,y)]\,dy \, \frac{d\ell}{R} + O_{A}\left(\varepsilon+\frac{C_{\varepsilon}}{R^{1/4}}\right).
\end{align}
\end{lem}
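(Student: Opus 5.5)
The plan is to rewrite $Z_1$ as a localized version of the energy-virial identity in Proposition \ref{proposicao da massa e energia}, using \eqref{psi_prime} to split the weight $\psi_R'(y-x)$ into a product of cutoffs centered at a common point $\ell$. Since $\partial_t e$ is a spatial divergence up to nonlocal terms, integrating by parts in $y$ against $\psi_R(y-x)$ should produce
\[
Z_1(t)=\iint \psi_R'(y-x)\,\rho(t,x)\,\kappa^{\mathrm{loc}}(t,y)\,dx\,dy+\text{errors}.
\]
Here $\kappa^{\mathrm{loc}}$ is the expression obtained by redoing the computation \eqref{termoA}--\eqref{termoC} with the weight $x$ replaced by $\psi_R(\cdot-x)$. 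Inserting \eqref{psi_prime} gives
\[
\int\Bigl(\int\varphi_R^2(x-\ell)\rho\,dx\Bigr)\Bigl(\int\varphi_R^2(y-\ell)\kappa^{\mathrm{loc}}\,dy\Bigr)\frac{d\ell}{R}.
\]
The first factor is exactly $\int\rho[\varphi_R(x-\ell)u]\,dx$. For the second, we replace $\varphi_R^2 u D^{2\alpha}u$ by $(\varphi_R u)D^{2\alpha}(\varphi_R u)$, and similarly $\varphi_R^2 u^{k+1}D^\alpha u$ by $(\varphi_R u)^{k+1}D^\alpha(\varphi_R u)$, together with the trivial identity for $v^{2k+2}$. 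The commutator errors are controlled by $\|D^\beta\varphi_R\|_{L^2}+\|\mathcal H D^\beta\varphi_R\|_{L^2}\lesssim R^{-1/4}$ and $\|\partial^j\varphi_R\|_{L^\infty}\lesssim R^{-3j/4}$.

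\textbf{Step 1: commutator identities.} First I will replace the exact identity \eqref{identidade}, $[D^\beta;x]\partial_x=\beta D^\beta$, by its localized analogue with $x$ replaced by $\psi_R(\cdot-x)$. Writing $\psi_R'=\varphi_R^2*\varphi_R^2/R$, the commutator $[D^\beta,\psi_R]\partial_x$ equals $\beta\,\psi_R' D^\beta$ plus terms involving $\psi_R''$ and higher derivatives. Each derivative of $\psi_R$ beyond the first costs a factor $R^{-3/4}$ relative to $\psi_R'$. I expect these to be bounded by $R^{-1/4}$ times norms controlled by $A$, after integrating in $x$ against $\rho$ (mass $\lesssim_A 1$) and in $\ell$ over $\mathrm{supp}$, which has size comparable to $R$ and is normalized by $1/R$.

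\textbf{Step 2: the highest-order term.} The problem is that $u D^{2\alpha}u$ involves $H^\alpha$ regularity, which is not controlled by $A$. So any commutator error landing on $D^{2\alpha}$ must be absorbed rather than bounded. This is where the $(1-\varepsilon)$ loss enters. Positive quantities such as $\|D^\alpha(\varphi_R u)\|_{L^2}^2$ appear in the main term, and commutator errors of the form $\int\varphi_R' (\ldots) D^{\alpha}u\, D^{\alpha-1}u$ are bounded, via Cauchy--Schwarz and $ab\le\varepsilon a^2+C_\varepsilon b^2$, by $\varepsilon\|D^\alpha(\varphi_Ru)\|^2$. The remainder is $C_\varepsilon R^{-1/4}\|u\|_{H^{\alpha/2}}^2$-type terms, with intermediate Sobolev norms interpolated.

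\textbf{Step 3: the mixed term.} For the term $\varphi_R^2u^{k+1}D^\alpha u$, a similar commutator estimate reduces it to $(\varphi_Ru)^{k+1}D^\alpha(\varphi_Ru)$ up to $O_A(R^{-1/4})$, using Sobolev embedding for $\|u\|_{L^\infty}$. When this term cannot be estimated directly, the almost non-negativity \eqref{almost_non_negativity} together with Lemma \ref{lem: Identidade 1} lets us discard a signed piece with an $O_A(\varepsilon)$ loss.

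\textbf{Main obstacle.} I expect Step 2 to be the main obstacle: the fractional commutator $[D^{2\alpha},\varphi_R]$ tested against $\varphi_R u$, where there is no Leibniz identity. My first approach would be to write $D^{2\alpha}=D^\alpha D^\alpha$, commute each factor separately with the Kato--Ponce/Li commutator estimates \cite{KatoPonce1988,Li2019} (gaining $\|\partial\varphi_R\|$), and then absorb into the positive $\|D^\alpha(\varphi_R u)\|^2$ term with the $\varepsilon$-weight. Uniformity in $\ell$ would then follow from translation invariance.
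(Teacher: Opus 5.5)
Your overall architecture matches the paper's. You localize the virial computation with $\psi_R$, factor $\psi_R'$ through \eqref{psi_prime} so that the $x$-weight is exactly $\rho[\varphi_{R,\ell}u]$, and commute $\varphi_{R,\ell}$ through $D^\alpha$ with an $\varepsilon$-loss. You then use \eqref{almost_non_negativity} to pass from $\varphi_{R,\ell}^2u^{k+1}D^\alpha u$ to $(\varphi_{R,\ell}u)^{k+1}D^\alpha(\varphi_{R,\ell}u)$, and $\varphi_R^2\ge\varphi_R^{2k+2}$ for the last term.

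The real difference is the top-order dispersive term. The paper applies the symmetric Ginibre--Velo expansion (Lemma~\ref{lema_comutador_definitivo}(b) with $\beta=\alpha$) to $[D^{2\alpha}\partial_x,\psi_R(\cdot-x)]$. This directly yields $\frac{2\alpha+1}{8}\iint\psi_R'\rho\,[(D^\alpha u)^2+(\mathcal H D^\alpha u)^2]$, which is pointwise nonnegative. What remains is $\psi_R'''$-terms containing only $D^{\alpha-1}u$ (controlled by $A$ since $\alpha-1\le\frac{\alpha}{2}$) and an $L^2$-bounded remainder. The only genuine $\varepsilon$-loss is then $\|\varphi_{R,\ell}D^\alpha u\|_{L^2}^2\ge(1-\varepsilon)\|D^\alpha(\varphi_{R,\ell}u)\|_{L^2}^2-C_\varepsilon\|[D^\alpha,\varphi_{R,\ell}]u\|_{L^2}^2$, which needs just one $L^2$ commutator bound.

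Your unsymmetrized expansion $[D^{2\alpha},\psi_R]\partial_x\approx 2\alpha\psi_R'D^{2\alpha}$ leaves two problems. One is the sign-indefinite term $\psi_R'uD^{2\alpha}u$. The other is a correction $c_\alpha\psi_R''D^{2\alpha-2}\partial_x$ of order $2\alpha-1$.

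Your Step~1 claim that this correction is $O_A(R^{-1/4})$ is false: pairing it with $u$ needs $H^{\alpha-\frac12}$, which $A$ does not control. You have two ways to handle it.
\begin{itemize}
\item Symmetrize it away. $D^{2\alpha-2}\partial_x$ is skew-adjoint, so $\langle u,\psi_R''D^{2\alpha-2}\partial_x u\rangle=\frac12\langle u,[\psi_R'',D^{2\alpha-2}\partial_x]u\rangle$. This is a $\psi_R'''$-term of order $2\alpha-2$, which is exactly what the Ginibre--Velo form encodes.
\item Absorb it as in your Step~2. This works only after you re-localize the bare $D^\alpha u$, using the factor $\varphi_{R,\ell}$ in $(\varphi_{R,\ell}^2)'=2\varphi_{R,\ell}\varphi_{R,\ell}'$ and a further commutator with $\mathcal H D^{\alpha-1}$.
\end{itemize}

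The same issue appears in your ``main obstacle'' identity $\int\varphi^2uD^{2\alpha}u=\|D^\alpha(\varphi u)\|_{L^2}^2-\langle\varphi u,[D^{2\alpha},\varphi]u\rangle$. There the piece $\langle[D^\alpha,\varphi](\varphi u),D^\alpha u\rangle$ is not closed by commuting once, since $D^\alpha u$ is unlocalized. Put $D^{\alpha/2}$ on the commutator and use $\frac{3\alpha}{2}-1\le\alpha$ to absorb it into $\varepsilon\|\varphi u\|_{H^\alpha}^2$.

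So your route is viable but needs more commutator machinery, whereas the paper's gets manifest positivity cheaply. For the mixed term the comparison goes the other way. Your direct expansion $[D^\alpha\partial_x,\psi_R]\approx(\alpha+1)\psi_R'D^\alpha$ is justified by Lemma~\ref{lem:Li-corollary-1-4}, with corrections of order $\alpha-1\le\frac{\alpha}{2}$. It is simpler than the paper's detour through $D^{\alpha/2}$-pairings and Hilbert-transform terms.
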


\begin{lem}\label{lem_Z2_bound}
   For all $\varepsilon >0$, the following bound on $Z_2$ holds, for all $t \geq 0$:
   \begin{equation}
       Z_2(t) = -(1-\varepsilon)\iint e[\varphi_R(y-\ell)u(t,x)]\, dx \int j[\varphi_R(x-\ell)u(t,y)]\, dy \, \frac{d\ell}{R}+O_{A}\left(\varepsilon +\frac{1}{ R^{1/4}}\right).
   \end{equation}
\end{lem}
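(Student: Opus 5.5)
The plan is to write $\partial_t\rho$ as a spatial derivative up to commutator errors, integrate by parts in $x$ to bring out $\psi_R'(y-x)$, and then use the factorization \eqref{psi_prime} to split the result into the $\ell$-averaged product of localized quantities.

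\textbf{Step 1: the exact divergence.} From \eqref{DGBO},
\[
\partial_t\rho=-2uD^\alpha\partial_x u-2u\partial_x(u^{k+1}),
\qquad
-2u\partial_x(u^{k+1})=-\tfrac{2(k+1)}{k+2}\partial_x(u^{k+2}).
\]
For fixed $y$, set $\chi(x)=\psi_R(y-x)$. Integrating the nonlinear term by parts in $x$ gives exactly
\[
-\int \psi_R'(y-x)\,\tfrac{2(k+1)}{k+2}u^{k+2}(x)\,dx .
\]

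\textbf{Step 2: the dispersive term.} Since $D^\alpha\partial_x$ is skew-adjoint, $-2\int\chi\,uD^\alpha\partial_x u=\int u\,[D^\alpha\partial_x,\chi]\,u$. I expand the commutator as
\[
[D^\alpha\partial_x,\chi]=-(\alpha+1)\,\chi_x' D^\alpha+\mathcal R_\chi,
\]
with the sign matching Step 1 (note $\partial_x\chi=-\psi_R'(y-x)$). This is the symmetrized form of identity \eqref{identidade}: on Fourier side the symbol of $D^\alpha\partial_x$ is $i|\xi|^\alpha\xi$, whose derivative is $(\alpha+1)i|\xi|^\alpha$. The remainder $\mathcal R_\chi$ involves $\chi''$ and higher-order terms. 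Since $\|\partial^j\psi_R'\|_{L^\infty}\lesssim R^{-3j/4}$ and, by the same bump estimates, the relevant $L^2$ norms of fractional derivatives of $\psi_R''$ are $O(R^{-1/4})$, a Kato--Ponce/Li-type commutator estimate \cite{KatoPonce1988,Li2019} should give $|\int u\,\mathcal R_\chi u|\lesssim_A R^{-1/4}$, uniformly in $y$. Combining with Step 1,
\[
Z_2=-\iint\psi_R'(y-x)\Big[(\alpha+1)uD^\alpha u+\tfrac{2(k+1)}{k+2}u^{k+2}\Big](x)\,e(t,y)\,dx\,dy+O_A(R^{-1/4}).
\]
Multiplying the remainder by $e(y)$ and integrating in $y$ needs care, since $e$ is not absolutely integrable a priori. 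I would keep the remainder inside the double integral and use Lemma \ref{localization_lemma} in the $y$ variable instead.

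\textbf{Step 3: splitting via \eqref{psi_prime}.} Inserting \eqref{psi_prime}, the main term becomes
\[
-\int\Big(\int\varphi_R^2(x-\ell)\,j(x)\,dx\Big)\Big(\int\varphi_R^2(y-\ell)\,e(y)\,dy\Big)\frac{d\ell}{R}.
\]
It remains to replace $\int\varphi_R^2(\cdot-\ell)\,uD^\alpha u$ by $\int(\varphi_Ru)D^\alpha(\varphi_Ru)$, in both $j$ and $e$; the potential parts match after adjusting $\varphi_R^2$ versus $\varphi_R^{k+2}$. This again costs a commutator $[D^\alpha,\varphi_R(\cdot-\ell)]$.

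The main obstacle is making these errors summable in $\ell$ with only $H^{\alpha/2}$ control, because $uD^\alpha u$ is not locally integrable in a uniform way. My plan here is as follows.
\begin{itemize}
\item Split $u=P_{\le N}u+P_{>N}u$ with $N=N(\varepsilon)$.
\item Bound the high-frequency commutator contributions by $\varepsilon\,\|\varphi_R(\cdot-\ell)u\|_{H^{\alpha/2}}^2$ up to tails, using Lemma \ref{localization_lemma} and Bernstein.
\item Bound the low-frequency contributions by $C_\varepsilon R^{-1/4}$ times the same localized norms.
\item Use the finite-overlap bound $\int\|\varphi_R(\cdot-\ell)u\|_{H^{\alpha/2}}^2\,\frac{d\ell}{R}\lesssim_A 1$ to sum over $\ell$, which is where the normalization $d\ell/R$ is absorbed.
\end{itemize}
The factor $(1-\varepsilon)$ arises from absorbing the $\varepsilon$-part of these errors, together with the cross terms between the mass and energy factors, into the main product; each localized factor is $O_A(1)$ by Lemma \ref{localization_lemma}. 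This yields the stated expansion with error $O_A(\varepsilon+R^{-1/4})$.
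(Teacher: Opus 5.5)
Your Steps 1--2 are the paper's Step 1 in a different order. You expand $[D^\alpha\partial_x,\chi]$ with the weight $\chi=\psi_R(y-\cdot)$, to which Lemma \ref{lema_comutador_definitivo} or Lemma \ref{lem:Li-corollary-1-4} applies since $\chi'\in C_0^\infty$, and only afterwards insert \eqref{psi_prime}. This is equivalent to the paper's route through Lemma \ref{lem:claim_tilde_Phi} followed by the expansion with $\tilde\Psi_R(\cdot-\ell)$. The remainder is linear in the weight, and the two weights $\psi_R(y-x)$ and $-\int\varphi_R^2(y-\ell)\tilde\Psi_R(x-\ell)\frac{d\ell}{R}$ have the same $x$-derivative, so they differ by a function of $y$ alone. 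That difference is annihilated because $\int uD^\alpha\partial_x u\,dx=0$. Your non-symmetric leading term $(\alpha+1)\varphi_{R,\ell}^2uD^\alpha u$ also spares you the $\mathcal{H}D^{\frac\alpha2}$ terms. Handling the remainder against $e(y)$ by Lemma \ref{localization_lemma} in $y$ is exactly the paper's treatment of $Z_{2,e}$. So far this is fine.

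The gap is in Step 3, which misidentifies the difficulty.

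\begin{itemize}
\item The frequency splitting is superfluous: every commutator error is small because of the derivatives of $\varphi_R$, not because of frequency localization.
\item As you set it up, the splitting costs you the statement. The low-frequency part is bounded by $C_\varepsilon R^{-1/4}$, so you end with $O_A(\varepsilon+C_\varepsilon R^{-1/4})$, not the claimed $O_A(\varepsilon+R^{-1/4})$. Since the main product is $O_A(1)$, the lemma as stated (constants independent of $\varepsilon$, for every $\varepsilon$) is equivalent to $Z_2=-\iiint_{x,y,\ell}e[\varphi_{R,\ell}u](y)\,j[\varphi_{R,\ell}u](x)+O_A(R^{-1/4})$. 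Your scheme only gives an unquantified $o_{R\to\infty}(1)$ error.
\item The high-frequency bound by localized norms ``up to tails'' is left unjustified for the nonlocal commutator.
\end{itemize}

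What you need instead has two parts.

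\begin{itemize}
\item Use the uniform bound $\sup_\ell\|[D^\alpha,\varphi_{R,\ell}]u\|_{L^2}\lesssim_A R^{-1/4}$ of \eqref{commutator_argenis}. For $\alpha\le2$ this commutator has order $\alpha-1\le\frac\alpha2$, with coefficients given by derivatives of $\varphi_R$; the same holds for $D^{\frac\alpha2}$.
\item A commutator error never has to be summable in $\ell$ by itself, because it always multiplies a factor that is. That factor is $\int_\ell\varphi_{R,\ell}^2(x)\,\frac{d\ell}{R}\lesssim1$, or the finite-overlap bound for $\|D^{\frac\alpha2}(\varphi_{R,\ell}u)\|_{L^2}^2$ (which the paper proves with the extension of order $\frac\alpha2$), or, for products of two errors, $\int_\ell\|\varphi_{R,\ell}u\|_{L^2}^2\frac{d\ell}{R}\lesssim\|u\|_{L^2}^2$.
\end{itemize}

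Combined with \eqref{support_phi_bound} for $\varphi_R^2$ versus $\varphi_R^{k+2}$, every error is $O_A(R^{-1/4})$ with no $\varepsilon$. The factor $(1-\varepsilon)$ is then inserted trivially at the end using $|\iiint e[\varphi_{R,\ell}u]\,j[\varphi_{R,\ell}u]|\lesssim_A1$; there is nothing to absorb.
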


For the sake of readability, we split the proofs of Lemmas \ref{lem_Z1_bound} and \ref{lem_Z2_bound} between Sections \ref{proof_lem_Z1_bound} and \ref{proof_lem_Z2_bound}, respectively.

\begin{re}
    In \cite{MR3772197} and \cite{MR4009456}, the arguments in the localization use a quadratic difference formula
    \begin{equation}
        \int \phi^2 (D^{\alpha} u)^2 = \int (D^{\alpha}(\phi u))^2 + O(\|\phi'\|_{H^4}^2\|u\|_{H^{\frac{\alpha}{2}}}^2)
    \end{equation}
    for $\alpha \in \{1,2\}$. The proof relies on integrating the squares
    \begin{equation}
        (\phi D^{\alpha} u)^2  = (D^{\alpha}(\phi u)-[D^{\alpha},\phi]u)^2
    \end{equation}
    and treating the cross term by either integrating by parts, in the KdV case, or by the Bajšanski-Coifman smoothing estimate \cite{BC1966} (see also Dawson et al. \cite{DawsonPonce}) for the Calderón commutator in the BO case, thus transferring enough derivatives to the (smooth) weight $\phi$. The same principle, however, does not naturally extend to the case of fractional dispersion. 
    In principle, one could deduce an analogous formula for $\alpha \in (1,2)$, by expanding the singular integral formulation \eqref{eq:gagliardo_singular_integral} to compute $\phi ^2 (D^{\alpha}u)^2 - (D^{\alpha}(\phi u))^2$ in terms of lower-order derivatives in $u$. Nonetheless, we employ a simpler solution, allowing an arbitrarily small loss in the monotonicity formula, which can readily be absorbed by the localization given by compactness.

\end{re}

\subsection{Error Estimates}
Before proving the lemmas, we recall two commutator identities due to Ginibre and Velo from their seminal works \cite{MR1122309, MR1033618}.

\begin{lem}\label{lema_comutador_definitivo}
Let $0<\beta<2$ and $\phi \in C^\infty(\R)$, with $\phi^\prime \in C^\infty_0(\R)$. Then, one has the commutator expansions:
\begin{itemize}
    \item[(a)] If $0<\beta\leq 1$:
\begin{align}
    [D^{2\beta}\partial_x, \phi]f&=\frac{2\beta+1}{2}  D^\beta(\phi^\prime D^\beta_xf)  - \frac{2\beta+1}{2}\mathcal{H} D^\beta(\phi^\prime D^\beta\mathcal{H}f)  + R_\beta[\phi]f;
\intertext{\item[(b)] If $1<\beta<2$:}
    [D^{2\beta}\partial_x, \phi]f&=\frac{2\beta+1}{2}  D^\beta(\phi^\prime D^\beta_xf) - \frac{2\beta+1}{2} D^\beta\mathcal{H}(\phi^\prime D^\beta\mathcal{H}f) - \frac{\beta(\beta-1)(2\beta+1)}{12}D^{\beta-1}(\phi^{\prime \prime \prime}D^{\beta-1}f)\\ 
& \quad  + \frac{\beta(\beta-1)(2\beta+1)}{12}D^{\beta-1}\mathcal{H}(\phi^{\prime \prime \prime}D^{\beta-1} \mathcal{H}f) + R_\beta[\phi]f,
\end{align}
\end{itemize}
where $R_\beta$ satisfies
\begin{equation}
    \|R_\beta[\phi]f\|_{L^2} \lesssim \|\widehat{(D^{\beta +2}\phi)}\|_{L^1} \|f\|_{L^2}.
\end{equation}
\end{lem}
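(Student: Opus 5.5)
Since $\phi'$ is smooth and compactly supported, I would work entirely on the Fourier side and treat the commutator as a pseudo-differential expansion in powers of $\widehat{\phi'}$'s frequency variable.

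\textbf{Symbolic form of the commutator.} Writing $m(\xi)=i\xi|\xi|^{2\beta}$ for the symbol of $D^{2\beta}\partial_x$, we have
\[
\widehat{[D^{2\beta}\partial_x,\phi]f}(\xi)=\frac{1}{2\pi}\int \bigl(m(\xi)-m(\eta)\bigr)\widehat\phi(\xi-\eta)\widehat f(\eta)\,d\eta .
\]
Set $\zeta=\xi-\eta$, so that $\widehat\phi(\zeta)=\widehat{\phi'}(\zeta)/(i\zeta)$. The terms on the right-hand side have bilinear symbols of the form $|\xi|^\beta|\eta|^\beta$ and $\operatorname{sgn}(\xi)\operatorname{sgn}(\eta)|\xi|^\beta|\eta|^\beta$ (for the $\mathcal H$ terms), multiplied by $\widehat{\phi'}(\zeta)$ or $\widehat{\phi'''}(\zeta)=-\zeta^2\widehat{\phi'}(\zeta)$. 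Note that
\[
1-\operatorname{sgn}\xi\operatorname{sgn}\eta=2\cdot\mathbf 1_{\{\xi\eta<0\}}.
\]

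\textbf{Step 1: the same-sign region $\xi\eta>0$.} Here the $\mathcal H$ terms cancel the main terms, with the signs as stated. I would use the symmetric expansion around the midpoint $\mu=(\xi+\eta)/2$:
\[
\frac{m(\xi)-m(\eta)}{\zeta}\quad\text{versus}\quad\frac{2\beta+1}{2}\cdot 2|\xi|^\beta|\eta|^\beta.
\]
Taylor-expanding both $\frac{m(\mu+\zeta/2)-m(\mu-\zeta/2)}{\zeta}$ and $|\mu+\zeta/2|^\beta|\mu-\zeta/2|^\beta$ in $\zeta/\mu$ shows that they agree at order $|\mu|^{2\beta}$. Only even powers of $\zeta$ appear, which is why there is no $\phi''$ term. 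The $\zeta^2|\mu|^{2\beta-2}$ discrepancy should produce the coefficient $\beta(\beta-1)(2\beta+1)/12$ multiplying $|\xi|^{\beta-1}|\eta|^{\beta-1}\zeta^2$; this term is needed only when $\beta>1$. The remaining Taylor term is $O(|\zeta|^4|\mu|^{2\beta-4})$.

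\textbf{Step 2: the opposite-sign region $\xi\eta<0$.} Here $|\xi|,|\eta|\le|\zeta|$. Every term, including the explicit ones, is bounded by $|\zeta|^{2\beta+1}|\widehat\phi(\zeta)|\lesssim|\zeta|^{2\beta}|\widehat{\phi'}(\zeta)|$. Since $2\beta\le\beta+2$ when $\beta\le2$, this is controlled by $|\widehat{D^{\beta+2}\phi}|$ together with a low-frequency part.

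\textbf{Step 3: bounding the remainder.} In every region I would show that the remainder kernel $K(\xi,\eta)$ satisfies
\[
|K(\xi,\eta)|\lesssim|\zeta|^{\beta+2}|\widehat\phi(\zeta)|.
\]
Schur's test (or Young's inequality) then gives
\[
\|R_\beta[\phi]f\|_{L^2}\lesssim\|\widehat{D^{\beta+2}\phi}\|_{L^1}\|f\|_{L^2}.
\]
For Step 1 this is split into two cases:
\begin{itemize}
\item When $|\zeta|\ll|\mu|$, the Taylor remainder gives $|\zeta|^{4}|\mu|^{2\beta-4}\cdot|\zeta|^{-1}\cdot|\zeta|\le|\zeta|^{\beta+2}$, because $2\beta-4\le0$ and $|\mu|\gtrsim|\zeta|$. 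More precisely $|\zeta|^4|\mu|^{2\beta-4}\le|\zeta|^{2\beta}$, and I still need to recover $|\zeta|^{\beta+2-2\beta}$. This is where I would be careful: it works via the bound $|\zeta|^{4}|\mu|^{2\beta-4}\le|\zeta|^{\beta+2}|\mu|^{\beta-2}$ together with the $L^2$-boundedness of $|\mu|^{\beta-2}$ on frequencies $\gtrsim|\zeta|$. Cleaner is to keep the exact factor $|\zeta|^{\beta+2}$ and accept a bounded multiplier factor.
\item When $|\zeta|\gtrsim|\mu|$, every term is bounded by $|\zeta|^{2\beta}|\widehat{\phi'}|$, or by $|\zeta|^{2\beta-2}\cdot|\zeta|^{2}|\widehat{\phi'}|$ for the cubic-derivative terms. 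This is again at most $|\zeta|^{\beta+2}|\widehat\phi|$ for $|\zeta|\ge1$. For $|\zeta|\le1$, the hypothesis $\phi'\in C_0^\infty$ gives integrable control.
\end{itemize}

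\textbf{Main obstacle.} I expect the main obstacle to be the exact bookkeeping of the second-order Taylor coefficient, which must yield $\beta(\beta-1)(2\beta+1)/12$, and the uniform symbol bound across the transition $|\zeta|\sim|\mu|$. Since this is Ginibre–Velo's lemma, I would ultimately cite \cite{MR1122309, MR1033618} for these constants, verifying only the midpoint expansion.
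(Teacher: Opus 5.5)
You correctly identify the two crux points: the second-order coefficient, and the symbol bound near $|\zeta|\sim|\mu|$. Neither can be completed, and neither can be repaired for the statement as printed. The paper itself gives no argument, only the citation to Ginibre--Velo.

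\textbf{The remainder bound.} In Steps 2--3 you slip between $\widehat\phi$ and $\widehat{\phi'}=i\zeta\widehat\phi$. Since $|\widehat{D^{\beta+2}\phi}(\zeta)|=|\zeta|^{\beta+1}|\widehat{\phi'}(\zeta)|$, your bound $|\zeta|^{2\beta}|\widehat{\phi'}(\zeta)|$ on $\{\xi\eta<0\}$ and on $\{|\zeta|\gtrsim|\mu|\}$ is dominated by it only if $2\beta\le\beta+1$, i.e.\ $\beta\le1$. The comparison $2\beta\le\beta+2$ is the wrong one. Likewise, your Taylor remainder $O(|\mu|^{2\beta-4}|\zeta|^4)|\widehat{\phi'}|$ has size $|\zeta|^{2\beta+1}|\widehat\phi|$ when $|\mu|\sim|\zeta|$, and the detour through $|\mu|^{\beta-2}$ does not change this.

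For $\beta>1$ nothing cancels this loss. The explicit terms carry the factor $1+\operatorname{sgn}\xi\operatorname{sgn}\eta$. With $\widehat{\mathcal Hf}=-i\operatorname{sgn}\xi\,\widehat f$, the $\mathcal H$-terms reinforce the others on $\{\xi\eta>0\}$ (not cancel them, as you wrote) and cancel them exactly on $\{\xi\eta<0\}$. On $\{\xi\eta<0\}$ the commutator symbol is $\frac{|\xi|^{2\beta+1}+|\eta|^{2\beta+1}}{|\xi|+|\eta|}\,\widehat{\phi'}(\zeta)$, whose modulus is comparable to $|\zeta|^{2\beta}|\widehat{\phi'}(\zeta)|$. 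So the kernel bound $|K|\lesssim|\zeta|^{\beta+2}|\widehat\phi(\zeta)|$ of Step 3 is false there.

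No other argument can give the stated estimate either. The commutator and every explicit term pick up the same factor $\lambda^{-(2\beta+1)}$ under $f\mapsto f(\cdot/\lambda)$, $\phi\mapsto\phi_\lambda=\phi(\cdot/\lambda)$. Hence $\|R_\beta[\phi_\lambda]\|_{L^2\to L^2}=\lambda^{-(2\beta+1)}\|R_\beta[\phi]\|_{L^2\to L^2}$, while $\|\widehat{D^{\beta+2}\phi_\lambda}\|_{L^1}=\lambda^{-(\beta+2)}\|\widehat{D^{\beta+2}\phi}\|_{L^1}$. The stated bound would then give $\|R_\beta[\phi]\|_{L^2\to L^2}\lesssim\lambda^{\beta-1}\|\widehat{D^{\beta+2}\phi}\|_{L^1}$ for every $\lambda>0$. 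Letting $\lambda\to0$ (or $\lambda\to\infty$ when $\beta<1$) forces $R_\beta[\phi]=0$, which the computation on $\{\xi\eta<0\}$ rules out. With the correct constant (below), your regional estimates prove the scale-invariant bound $\|R_\beta[\phi]f\|_{L^2}\lesssim\|\widehat{D^{2\beta+1}\phi}\|_{L^1}\|f\|_{L^2}$.

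\textbf{The constant.} You assert this rather than compute it. Take $\xi,\eta>0$ (the case $\xi,\eta<0$ is symmetric), $p=2\beta+1$, $\xi=\mu+\frac{\zeta}{2}$, $\eta=\mu-\frac{\zeta}{2}$ and $|\zeta|\le|\mu|$. Your midpoint expansion gives
\[
\frac{\xi^{p}-\eta^{p}}{\xi-\eta}=p\,(\xi\eta)^{\beta}+\frac{\beta(\beta+1)(2\beta+1)}{6}\,(\xi\eta)^{\beta-1}\zeta^{2}+O\bigl(|\mu|^{2\beta-4}|\zeta|^{4}\bigr),
\]
where the middle coefficient is $\frac14\bigl[\binom{p}{3}+p\beta\bigr]$. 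Since $\widehat{\phi'''}=-\zeta^{2}\widehat{\phi'}$ and the explicit terms are doubled on $\{\xi\eta>0\}$, the coefficient of $D^{\beta-1}(\phi'''D^{\beta-1}f)$ must be $-\frac{\beta(\beta+1)(2\beta+1)}{12}$, not $-\frac{\beta(\beta-1)(2\beta+1)}{12}$.

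Two exact checks:
\begin{itemize}
\item At $\beta=1$ the symbol is $\xi^2+\xi\eta+\eta^2=3\xi\eta+\zeta^2$.
\item At $\beta=2$, where $D^4\partial_x=\partial_x^5$, it is $5(\xi\eta)^2+5\xi\eta\,\zeta^2+\zeta^4$, i.e.\ $[\partial_x^5,\phi]f=5\partial_x^2(\phi'\partial_x^2f)+5\partial_x(\phi'''\partial_xf)+\phi^{(5)}f$.
\end{itemize}
Both match $\frac{\beta(\beta+1)(2\beta+1)}{6}$, which equals $1$ and $5$ respectively. At $\beta=2$ the printed constant gives $\frac53$ instead.

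With the printed constant, the remainder retains the operator with symbol $\frac{\beta(2\beta+1)}{3}(\xi\eta)^{\beta-1}\zeta^{2}\,\widehat{\phi'}(\zeta)\mathbf 1_{\{\xi\eta>0\}}$. This has positive order $2\beta-2$ and is therefore unbounded on $L^2$. Deferring the constants to the citation, as the paper does, hides exactly the step that fails.

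Your method does establish a corrected lemma: coefficient $\frac{\beta(\beta+1)(2\beta+1)}{12}$, and remainder controlled by $\|\widehat{D^{2\beta+1}\phi}\|_{L^1}$. That version still suffices in Section~\ref{sec-7}, where the $\phi'''$ terms and $R_\beta$ enter only as errors that vanish as $R\to\infty$.
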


\begin{re}
Observe that, since we can write 
\begin{equation}\label{formula s>1}
    \int \phi\, f\, D^{2\beta} \partial_x f\, dx
   =-\frac{1}{2} \int f\, [D^{2\beta}\partial_x, \phi]\, f\, dx
\end{equation}
we obtain the follwing formula
\begin{equation}\label{identity_s_leq_1}
    \int \phi\, f\, D^{2\beta} \partial_x f\, dx
    = -\frac{2\beta+1}{4} \int \phi' \bigl( |D^\beta f|^2 + |D^\beta \mathcal{H} f|^2 \bigr)\, dx
      + \int f\, R_\beta[\phi]\, f\, dx,
\end{equation}
for all $\beta \in (0,1]$.  Likewise, a similar identity remains valid for $\beta \in [1,2)$:
\begin{align}\label{identity_s_geq_1}
    \int \phi\, f\, D^{2\beta} \partial_x f\, dx
    &= -\frac{2\beta+1}{4} \int \phi' \bigl( |D^\beta f|^2 + |D^\beta \mathcal{H} f|^2 \bigr)\, dx\\
    &\quad+\frac{\beta(\beta-1)(2\beta+1)}{24} \int \phi^{\prime \prime \prime} \bigl( |D^{\beta-1} f|^2 + |D^{\beta-1} \mathcal{H} f|^2 \bigr)\, dx\\
      &\quad+ \int f\, R_\beta[\phi]\, f\, dx.
\end{align}
\end{re}
\begin{re}
    Lemma \ref{lema_comutador_definitivo} can be used as a replacement for \eqref{identidade} if the solution $u$ does not decay fast at infinity. The terms with higher-order derivatives in $\phi$ are due to space truncation, and will be shown to decay as $R$ increases.
\end{re}



We recall the following estimate due to Li, which will be used in the estimates below.

\begin{lem}[cf. {\cite[Corollary 1.4]{Li2019}}]\label{lem:Li-corollary-1-4}
Let $1<p<\infty$ and $s>0$. Assume that $A^s$ is a Fourier multiplier with symbol
$A^s(\xi)$ homogeneous of degree $s$ and smooth on $\mathbb{S}^{d-1}$. Let
$s_1,s_2\geq 0$ be such that $s_1+s_2=s$. Then, for all
$f,g\in \mathcal{S}(\mathbb{R}^d)$, the following estimates hold:

If $1<p_1,p_2<\infty$ and
\[
\frac1p=\frac1{p_1}+\frac1{p_2},
\]
then
\[
\left\|
A^s(fg)
-\sum_{|\alpha|\leq s_1}\frac{\partial^\alpha f}{\alpha!}A^s_\alpha g
-\sum_{|\beta|\leq s_2}\frac{\partial^\beta g}{\beta!}A^s_\beta f
\right\|_{L^p}
\lesssim
\|D^{s_1}f\|_{L^{p_1}}\|D^{s_2}g\|_{L^{p_2}}.
\]

If $p_1=p$ and $p_2=\infty$, then
\[
\left\|
A^s(fg)
-\sum_{|\alpha|<s_1}\frac{\partial^\alpha f}{\alpha!}A^s_\alpha g
-\sum_{|\beta|\leq s_2}\frac{\partial^\beta g}{\beta!}A^s_\beta f
\right\|_{L^p}
\lesssim
\|D^{s_1}f\|_{L^p}\|D^{s_2}g\|_{\mathrm{BMO}}.
\]

If $p_1=\infty$ and $p_2=p$, then
\[
\left\|
A^s(fg)
-\sum_{|\alpha|\leq s_1}\frac{\partial^\alpha f}{\alpha!}A^s_\alpha g
-\sum_{|\beta|<s_2}\frac{\partial^\beta g}{\beta!}A^s_\beta f
\right\|_{L^p}
\lesssim
\|D^{s_1}f\|_{\mathrm{BMO}}\|D^{s_2}g\|_{L^p}.
\]

Here, for each multi-index $\alpha$, the operator $A^s_\alpha$ is defined by
\[
\widehat{A^s_\alpha h}(\xi)
=
i^{-|\alpha|}
\partial_\xi^\alpha A^s(\xi)\widehat h(\xi).
\]
\end{lem}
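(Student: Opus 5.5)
The plan is to prove the estimate by a paraproduct decomposition combined with a Taylor expansion of the symbol, in the spirit of Kato--Ponce and Kenig--Ponce--Vega commutator estimates. Write
\[
\widehat{A^s(fg)}(\zeta)=\int A^s(\xi+\eta)\,\widehat f(\xi)\widehat g(\eta)\,\delta(\zeta-\xi-\eta)
\]
and split the product $fg$, via Littlewood--Paley projections, into three pieces: the low--high region $|\xi|\ll|\eta|$, the high--low region $|\eta|\ll|\xi|$, and the high--high region $|\xi|\sim|\eta|$. Each Taylor term $\frac{\partial^\alpha f}{\alpha!}A^s_\alpha g$ is a bilinear multiplier with symbol $\frac{\xi^\alpha}{\alpha!}\,\partial^\alpha A^s(\eta)$, up to the normalisation $i^{-|\alpha|}$. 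I will split it into the same three regions.

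In the low--high region the symbol $A^s(\xi+\eta)$ is smooth, since $|\xi+\eta|\sim|\eta|$. Taylor expanding in $\xi$ around $\eta$ up to order $\lfloor s_1\rfloor$ reproduces exactly the terms $\frac{\partial^\alpha f}{\alpha!}A^s_\alpha g$ restricted to this region. The integral remainder has symbol
\[
\sum_{|\alpha|=N}\xi^\alpha\int_0^1 c_N(1-t)^{N-1}\,\partial^\alpha A^s(\eta+t\xi)\,dt,\qquad N=\lfloor s_1\rfloor+1 .
\]
After factoring out $|\xi|^{s_1}|\eta|^{s_2}$, what remains is $(|\xi|/|\eta|)^{N-s_1}$ times a smooth symbol of order $0$ in the region. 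This is a Coifman--Meyer bilinear multiplier, so it is bounded $L^{p_1}\times L^{p_2}\to L^p$, giving $\|D^{s_1}f\|_{L^{p_1}}\|D^{s_2}g\|_{L^{p_2}}$. In this region the terms coming from the expansion in $g$, namely $\frac{\partial^\beta g}{\beta!}A^s_\beta f$, are estimated directly. Their symbols $\eta^\beta\partial^\beta A^s(\xi)$ with $|\beta|\le s_2$ are of size $|\eta|^{|\beta|}|\xi|^{s-|\beta|}$, and this is at most $|\xi|^{s_1}|\eta|^{s_2}$ there because $|\beta|\le s_2$ and $|\eta|\gg|\xi|$. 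One caveat: the symbol is singular at $\xi=0$. I will handle this dyadically, summing over frequency shells of $f$ and obtaining geometric decay from the factor $(|\xi|/|\eta|)^{s_2-|\beta|}$. The borderline case $|\beta|=s_2$ produces no decay and is treated as a genuine paraproduct. The high--low region is symmetric, with the roles of $f$ and $g$ exchanged.

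In the high--high region $|\xi+\eta|\lesssim|\xi|\sim|\eta|$. I will bound $A^s(fg)$ and every Taylor term separately. For $A^s(fg)$, write $A^s=A^s D^{-s}\cdot D^s$ on each output shell and move $D^{s_1}$ onto $f$ and $D^{s_2}$ onto $g$. The output shell $2^m\le 2^j$ then yields a factor $2^{(m-j)s}$, which is summable since $s>0$, and the square-function/Coifman--Meyer theory closes the estimate. The Taylor terms restricted to this region have symbols of size $|\xi|^{|\alpha|}|\eta|^{s-|\alpha|}\sim|\xi|^{s_1}|\eta|^{s_2}$ and are again bounded by bilinear multiplier theory.

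For the endpoint cases with $p_2=\infty$ (or $p_1=\infty$), the same decomposition is used. The $L^\infty$ factor is replaced by $\mathrm{BMO}$ through the standard Carleson-measure argument: paraproducts $\sum_j (P_{\le j}F)(P_j G)$ with $G\in\mathrm{BMO}$ are bounded on $L^p$. This argument needs the high-frequency factor to carry the $\mathrm{BMO}$ function. It fails for the top-order term $|\beta|=s_2$, when $s_2$ is an integer, since there $\partial^\beta g$ sits at low frequency against a zero-order symbol. That is precisely why the sum is restricted to $|\beta|<s_2$ in that case: such a term is kept inside the remainder, where it is controlled as a paraproduct with $D^{s_2}g$ in $\mathrm{BMO}$ at high frequency.

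I expect the main obstacle to be this endpoint bookkeeping. On the $L^p$ side one has to verify that every remainder symbol is a genuine Coifman--Meyer symbol uniformly across the dyadic decomposition, given that $A^s$ is only homogeneous and smooth away from the origin. On the $\mathrm{BMO}$ side one has to check that the terms with $|\alpha|$ or $|\beta|$ equal to the integer $s_i$ are exactly the ones that must be excluded from the sum.
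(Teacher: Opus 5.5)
The paper gives no proof of this lemma and simply quotes Li. Your treatment of the case $1<p_1,p_2<\infty$ is the standard Littlewood--Paley/Taylor-expansion argument and is fine in outline. (The factor $(|\xi|/|\eta|)^{N-s_1}$ is not itself a Coifman--Meyer symbol near $\xi=0$, but the dyadic summation you mention handles that.)

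The genuine gap is in the endpoint paragraph, where the bookkeeping is reversed. For $p_2=\infty$, i.e.\ $D^{s_2}g\in\mathrm{BMO}$, the statement truncates the $\alpha$-sum to $|\alpha|<s_1$ and keeps $|\beta|\le s_2$. You instead drop the top $\beta$-term and claim it is then ``controlled as a paraproduct with $D^{s_2}g$ in BMO at high frequency''. That cannot happen. The $\beta$-terms are the Taylor terms of $A^s(\xi+\eta)$ in $\eta$, i.e.\ the expansion you perform in the high--low region $|\eta|\ll|\xi|$, which is precisely where $g$ is at low frequency.

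If the terms with $|\beta|=s_2\in\mathbb{Z}$ are subtracted, the remainder in that region carries an extra factor $|\eta|/|\xi|$. The pieces with $D^{s_2}g$ at low frequency then sum geometrically using only $\sup_k\|P_kD^{s_2}g\|_{L^\infty}\lesssim\|D^{s_2}g\|_{\mathrm{BMO}}$, where $P_k$ is the Littlewood--Paley projection to $|\xi|\sim 2^k$. If instead they are moved into the remainder, its high--low part contains a paraproduct without decay of $A^s_\beta f$ at frequency $\sim 2^j$ against $\partial^\beta g$ at frequency $\ll 2^j$. That puts the BMO function at low frequency, which is exactly the configuration that is not bounded on $L^p$. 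Meanwhile, in the low--high region, where $g$ is at high frequency, the $|\beta|=s_2$ term was harmless anyway.

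The version you would obtain is false in general. For $s_1\notin\mathbb{Z}$ it differs from the correct one by $\sum_{|\beta|=s_2}\frac{\partial^\beta g}{\beta!}A^s_\beta f$. For example, when $d=1$, $A^s=D^{\frac32}$, $s_1=\frac12$, $s_2=1$, the difference is $\frac32\,g'\,\mathcal{H}D^{\frac12}f$. The bound $\|g'\,\mathcal{H}D^{\frac12}f\|_{L^p}\lesssim\|D^{\frac12}f\|_{L^p}\|Dg\|_{\mathrm{BMO}}$ fails, because $\|Dg\|_{\mathrm{BMO}}\sim\|g'\|_{\mathrm{BMO}}$ while multiplication by a BMO function is not bounded on $L^p$.

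When $D^{s_2}g\in\mathrm{BMO}$, the term that actually obstructs is the top $\alpha$-term, $|\alpha|=s_1\in\mathbb{Z}$, on the high--low region. There it is not a Taylor term of $A^s(fg)$, so it must be estimated on its own. It is a paraproduct without decay of $\partial^\alpha f$ at frequency $\sim 2^j$ against $A^s_\alpha g$ (an operator of order $s_2$ applied to $g$) at frequency $\ll 2^j$, again with the BMO factor at low frequency.

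Dropping it costs nothing in the low--high region. Stopping the $\xi$-expansion at order $s_1-1$ leaves an order-$s_1$ remainder with no decay but with $g$ at high frequency, which is precisely what your Carleson-measure (or $T(1)$) paraproduct bound handles.

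The case $p_1=\infty$ is the mirror image. There the $|\beta|=s_2$ term must go, because the $\beta$-terms have to be estimated on their own in the low--high region, where $f$ is at low frequency. So the rule is to drop the top-order term of the expansion in the frequency of the $L^p$ function, never that of the BMO function. With this correction the rest of your scheme goes through.
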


As a consequence, we obtain the following commutator estimates.

\begin{lem}[cf. {\cite[Theorem 1.2 and Corollary 1.4]{Li2019}}]\label{lem:Li-commutator}
Let $\beta\in(0,1)$ and $1<p<\infty$. Then
\[
\|[D^\beta,\phi]f\|_{L^p}
+
\|[\mathcal{H}D^\beta,\phi]f\|_{L^p}
\lesssim
\|D^\beta \phi\|_{L^\infty}
\|f\|_{L^p},
\]
and
\[
\|[D^\beta,\phi]f\|_{L^p}
+
\|[\mathcal{H}D^\beta,\phi]f\|_{L^p}
\lesssim
\|D^\beta f\|_{L^p}
\|\phi\|_{L^\infty}.
\]
\end{lem}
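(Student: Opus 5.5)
We apply Lemma \ref{lem:Li-corollary-1-4} in dimension $d=1$ with $s=\beta\in(0,1)$. The symbols $A^\beta(\xi)=|\xi|^\beta$ and $A^\beta(\xi)=-i\,\mathrm{sgn}(\xi)|\xi|^\beta$ (that is, $D^\beta$ and $\mathcal H D^\beta$) are homogeneous of degree $\beta$ and smooth on $\mathbb S^0=\{\pm1\}$, so the lemma applies to both operators at once. Since $\beta<1$, every Taylor sum in Lemma \ref{lem:Li-corollary-1-4} reduces to its zeroth-order term, or is empty, depending on whether the inequality on the multi-index is strict. This is what makes the commutator appear directly. We first prove both bounds for $f,\phi\in\mathcal S(\R)$ and then pass to general $f,\phi$ by a density/limiting argument.

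\emph{First estimate.} We use the third case of Lemma \ref{lem:Li-corollary-1-4}, with $\phi$ in the first slot ($p_1=\infty$, $s_1=\beta$) and $f$ in the second slot ($p_2=p$, $s_2=0$). The first sum, over $|\alpha|\le\beta$, contributes only $\phi\,A^\beta f$. The second sum, over $|\beta'|<0$, is empty. Hence
\[
\|A^\beta(\phi f)-\phi A^\beta f\|_{L^p}\lesssim \|D^\beta\phi\|_{\mathrm{BMO}}\|f\|_{L^p}\lesssim\|D^\beta\phi\|_{L^\infty}\|f\|_{L^p},
\]
which is exactly $\|[A^\beta,\phi]f\|_{L^p}\lesssim \|D^\beta\phi\|_{L^\infty}\|f\|_{L^p}$ for $A^\beta\in\{D^\beta,\mathcal HD^\beta\}$.

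\emph{Second estimate.} Here we move all $\beta$ derivatives onto $f$. We use the third case again, now with $s_1=0$ on $\phi\in L^\infty\subset\mathrm{BMO}$ and $s_2=\beta$ on $f\in L^p$. The first sum again produces $\phi A^\beta f$. This time the second sum, over $|\beta'|<\beta$, contains the zeroth-order term $f\,A^\beta\phi$. We therefore obtain
\[
\|[A^\beta,\phi]f-f\,A^\beta\phi\|_{L^p}\lesssim\|\phi\|_{L^\infty}\|D^\beta f\|_{L^p}.
\]
Note that the second case of Lemma \ref{lem:Li-corollary-1-4} (with $f$ in the $L^p$ slot carrying $s_1=\beta$, and $\phi$ in the $L^\infty$ slot carrying $s_2=0$) produces the same remainder, so the choice of case does not avoid this term.

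\emph{The obstacle.} The remaining difficulty is the term $f\,A^\beta\phi$, which must still be bounded by $\|\phi\|_{L^\infty}\|D^\beta f\|_{L^p}$. I would first try duality. Since $D^\beta$ is self-adjoint and $\mathcal H D^\beta$ is skew-adjoint, we have $[A^\beta,\phi]^*=\mp[A^\beta,\phi]$. One could then pair $[A^\beta,\phi]f$ against $g\in L^{p'}$ and transfer derivatives so that Li's estimate applies with the BMO norm carried by $\phi$ itself. However, this route reproduces only the first estimate, so it does not by itself remove $f\,A^\beta\phi$. I do not yet have a proof that $\|f\,A^\beta\phi\|_{L^p}\lesssim\|\phi\|_{L^\infty}\|D^\beta f\|_{L^p}$ under the stated hypotheses. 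This step is the weak point of the plan.
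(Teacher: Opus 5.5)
Your treatment of the first bound is correct and matches the paper, which offers nothing beyond the citation of Li. The endpoint case of Lemma \ref{lem:Li-corollary-1-4} with $p_1=\infty$, $s_1=\beta$, $s_2=0$ leaves only $\phi\,A^\beta f$ in the Taylor sums, and $\|D^\beta\phi\|_{\mathrm{BMO}}\lesssim\|D^\beta\phi\|_{L^\infty}$. This works for both $A^\beta=D^\beta$ and $A^\beta=\mathcal{H}D^\beta$.

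The step you flag as the weak point cannot be completed by any argument, because the second inequality is false as stated. Your diagnosis that the surviving term $f\,A^\beta\phi$ is the obstruction is exactly right. Take $p=2$, a nonzero $f\in\Sc(\R)$ with $\operatorname{supp}\widehat f\subset[-1,1]$, and $\phi_N(x)=\cos(Nx)$ with $N\geq 2$. Then $\|\phi_N\|_{L^\infty}=1$ and $\widehat{\phi_N f}$ is supported in $\{N-1\leq|\xi|\leq N+1\}$. Since $|f|^2$ has Fourier support in $[-2,2]$, one has $\|\phi_N f\|_{L^2}^2=\frac12\|f\|_{L^2}^2$ exactly. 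By Plancherel, and since $|\xi|^\beta\leq 1$ on $\operatorname{supp}\widehat f$,
\[
\|[D^\beta,\phi_N]f\|_{L^2}\geq\|D^\beta(\phi_N f)\|_{L^2}-\|\phi_N D^\beta f\|_{L^2}\geq\Bigl(\tfrac{(N-1)^\beta}{\sqrt2}-1\Bigr)\|f\|_{L^2}\longrightarrow\infty,
\]
while $\|\phi_N\|_{L^\infty}\|D^\beta f\|_{L^2}=\|D^\beta f\|_{L^2}$ does not depend on $N$.

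The symbol of $\mathcal{H}D^\beta$ has the same modulus, so the same computation disproves the bound for $[\mathcal{H}D^\beta,\phi_N]$. Replacing $\phi_N$ by $\theta\cos(N\cdot)$, with $\theta\in\Sc(\R)$, $\|\theta\|_{L^\infty}\leq1$, $\operatorname{supp}\widehat\theta\subset[-1,1]$ and $N\geq 3$, gives a counterexample with $\phi\in\Sc(\R)$. So no density argument rescues the claim. In this example $f\,D^\beta\phi_N=N^\beta\cos(N\cdot)f$, which is precisely the term you could not absorb.

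What Li's result actually gives is the estimate you derived,
\[
\|[A^\beta,\phi]f-f\,A^\beta\phi\|_{L^p}\lesssim\|\phi\|_{L^\infty}\|D^\beta f\|_{L^p}.
\]
Any correct version of the second bound must therefore carry an extra term, such as $\|A^\beta\phi\|_{L^\infty}\|f\|_{L^p}$, on the right-hand side. Only the first bound can produce the $R^{-1/4}$ gains used in Section \ref{sec-7}, since its right-hand side is small for $\phi=\varphi_{R,\ell}$. You should prove that half as you did and record the second only in its corrected form.
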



The following lemma establishes a functional identity involving $\psi_R$ and $\varphi_R$ (see \eqref{eq:def_psi_R}).
\begin{lem}\label{lem:claim_tilde_Phi} Let $u \in \mathcal{S}(\R)$ and $\beta \in (1,2]$. Then, we have the identity
\begin{equation}\label{claim_tilde_Phi}
    \int_{\R} \psi_R(y-x)\, u(x)\, D_x^{\beta}\partial_x u(x)\, dx
    =
    -\iint_{\R\times\R}
        \varphi_R^{\,2}(y-\ell)\,
        \widetilde{\Psi}_R(x-\ell)\,
        u(x)\, D_x^{\beta}\partial_x u(x)\,
        dx\,\frac{d\ell}{R},
\end{equation}
where
\begin{equation}\label{def_tildePsi}
    \widetilde{\Psi}_R(x)
    := \int_{-\infty}^{x} \varphi_R^{\,2}(\ell)\, d\ell,
\end{equation}
is a primitive for $\varphi_R^{\,2}$.
\end{lem}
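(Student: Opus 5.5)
The plan is a direct computation. Unfold the definition \eqref{eq:def_psi_R} of $\psi_R$ as a double integral. Split the resulting kernel into a part that depends on $x$ and parts that do not. The $x$-independent parts are killed by the skew-adjointness of $D^\beta\partial_x$, that is,
\begin{equation}
\int_\R u\,D^\beta\partial_x u\,dx=0\quad\text{for real } u\in\mathcal S(\R),
\end{equation}
since $D^\beta\partial_x$ has the odd purely imaginary symbol $i|\xi|^\beta\xi$. I will use that $\varphi_R$ is even, which I take as part of the construction of $\varphi_R$: it is $1$ on $|x|\le R$ and vanishes for $|x|\ge R+R^{3/4}$, so it may be chosen symmetric. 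Write $c_R:=\int\varphi_R^2$.

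\emph{Step 1 (rewrite $\psi_R$).} First expand the convolution:
\begin{equation}
\psi_R(y-x)=\frac1R\int_0^{y-x}\!\!\int_\R \varphi_R^2(\ell')\varphi_R^2(m-\ell')\,d\ell'\,dm .
\end{equation}
Next substitute $\ell'=y-\ell$ and use Fubini, which is legitimate because everything is bounded and compactly supported in $(\ell,m)$ for fixed $x,y$. This gives
\begin{equation}
\psi_R(y-x)=\frac1R\int_\R\varphi_R^2(y-\ell)\int_0^{y-x}\varphi_R^2(m-y+\ell)\,dm\,d\ell .
\end{equation}
The inner integral equals $\widetilde\Psi_R(\ell-x)-\widetilde\Psi_R(\ell-y)$. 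By evenness of $\varphi_R$, we have $\widetilde\Psi_R(\ell-x)=c_R-\widetilde\Psi_R(x-\ell)$. Therefore
\begin{equation}
\psi_R(y-x)=-\frac1R\int_\R\varphi_R^2(y-\ell)\,\widetilde\Psi_R(x-\ell)\,d\ell+\frac1R\int_\R\varphi_R^2(y-\ell)\bigl(c_R-\widetilde\Psi_R(\ell-y)\bigr)\,d\ell .
\end{equation}
The second term is a function $g(y)$ alone.

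\emph{Step 2 (integrate against $u\,D^\beta\partial_x u$).} Multiply by $u(x)D^\beta_x\partial_x u(x)$ and integrate in $x$. The contribution of $g(y)$ is $g(y)\int u\,D^\beta\partial_x u\,dx=0$. For the first term, exchange the $x$ and $\ell$ integrals. Fubini applies because:
\begin{itemize}
\item $|\widetilde\Psi_R|\le c_R$;
\item $\varphi_R^2(y-\ell)$ restricts $\ell$ to a compact set;
\item $u\,D^\beta\partial_x u\in L^1$, since $u\in\mathcal S$ and $D^\beta\partial_x u\in L^2$.
\end{itemize}
The exchange yields exactly \eqref{claim_tilde_Phi}.

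The only delicate points are bookkeeping ones: getting the sign and orientation right in the change of variables in Step 1, and making sure the evenness of $\varphi_R$ is available. If one prefers not to assume evenness, the same computation still goes through. One then writes $\int_0^{y-x}\varphi_R^2(m-y+\ell)\,dm$ as a difference of primitives evaluated at $\ell-x$ and $\ell-y$. The primitive evaluated at $\ell-x$ is expressed via $\widetilde\Psi_R(x-\ell)$ for the reflected function $\varphi_R(-\cdot)$. So the symmetric choice of $\varphi_R$ is the natural one to make.
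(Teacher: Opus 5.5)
Your argument is correct, but it takes a different route from the paper.

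\textbf{The paper's route.} It lifts $\psi_R$ and $u$ to their Caffarelli--Silvestre extensions and rewrites the left-hand side via Stokes' theorem as a weighted integral over the half-plane. It then integrates by parts so that $\Psi_R$ enters only through its gradient, that is, through $P_z*\psi_R'$. Next it inserts the factorization \eqref{psi_prime} of $\psi_R'$. Finally it reverses the integrations by parts with $\widetilde\Psi_R(\cdot-\ell)$ in place of $\psi_R(y-\cdot)$.

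\textbf{Your route.} You work directly on the weight. You show that $\psi_R(y-x)+\frac1R\int\varphi_R^2(y-\ell)\widetilde\Psi_R(x-\ell)\,d\ell$ is independent of $x$; it is in fact the constant $\frac{1}{2R}\bigl(\int\varphi_R^2\bigr)^2$. You then remove it using $\int u\,D^\beta\partial_x u\,dx=0$.

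\textbf{Comparison.} Both proofs rest on the same structural fact: the functional $w\mapsto\int w\,u\,D^\beta\partial_x u\,dx$ annihilates constants, so it only sees $w'$. Your version makes this explicit and needs nothing beyond Fubini and the oddness of the symbol $i\xi|\xi|^\beta$. It also avoids the boundary terms and convergence issues of the extension argument, which the paper's proof leaves implicit. In addition, it exposes a hypothesis the paper uses only tacitly: passing from the convolution in \eqref{eq:def_psi_R} to the form \eqref{psi_prime} requires $\varphi_R$ to be even.

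\textbf{One correction.} Your closing remark should be sharpened. Without evenness, your computation does not give the stated identity. It gives the identity with $\widetilde\Psi_R$ replaced by the primitive of $\varphi_R^2(-\,\cdot)$. So evenness is genuinely needed, not merely convenient; alternatively, one can take \eqref{psi_prime} as the definition of $\psi_R'$.
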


\begin{proof} If $\Psi_R$ and $U$ are the $s$-order fractional extensions of $\psi_R$ and $u$, respectively, by the Stokes  theorem, we have 
\begin{align}
    \int_{\R} \psi_R(y-x) u(x) D^\beta_x \partial_x u(x) \,dx &= - c\iint_{\R \times \R_+} \nabla_{x,z}(\Psi_R(y-x,z)U(x,z) )\cdot \nabla_{x,z}\partial_x U(x,z) z^{1-\beta}\, dx\,dz\\
    &\hspace{-40mm}= - c\iint_{\R \times \R_+} \nabla_{x,z}[\Psi_R(y-x,z)] \cdot \nabla_{x,z}\partial_x U(x,z) U(x,z)z^{1-\beta}\, dx\,dz\\
    & - c\iint_{\R \times \R_+} \Psi_R(y-x,z)\nabla_{x,z}[U(x,z) ]\cdot \nabla_{x,z}\partial_x U(x,z) z^{1-\beta}\, dx\
    dz\\
    &\hspace{-40mm}= - c\iint_{\R \times \R_+} \nabla_{x,z}[\Psi_R(y-x,z)] \cdot \nabla_{x,z}\partial_x U(x,z)\, U(x,z)\, z^{1-\beta}\, dx\,dz\\
    & + \frac{c}{2}\iint_{\R \times \R_+} \partial_x\Psi_R(y-x,z) |\nabla_{x,z} U(x,z)|^2 \,z^{1-\beta}\, dx\,dz\\
    &\hspace{-40mm}=c\iint_{\R\times \R_+}\left(P_z*\psi_R'(y-x),\tfrac{x}{z}P_z*\psi_R'(y-x)\right) \cdot \nabla_{x,z}\partial_x U(x,z)\, U(x,z)\, z^{1-\beta}\,dxdz\\
    &+ \frac{c}{2}\iint_{\R \times \R_+} P_z*\psi_R'(y-x) |\nabla_{x,z} U(x,z)|^2 \,z^{1-\beta}\, dx\,dz\\
    &\hspace{-40mm}=c\int \varphi^2_R(y-\ell)\iint_{\R\times \R_+}\left(P_z*\varphi^2_R(x-\ell),\tfrac{x}{z}P_z*\varphi^2_R(x-\ell)\right) \cdot \nabla_{x,z}\partial_x U(x,z)\, U(x,z)\, z^{1-\beta}\,dxdz\, \frac{d\ell}{R}\\
    &+ \frac{c}{2}\int \varphi^2_R(y-\ell)\iint_{\R \times \R_+} P_z*\varphi_R^2(x-\ell) |\nabla_{x,z} U(x,z)|^2 \,z^{1-\beta}\, dx\,dz\\
    &\hspace{-40mm}=-\iint \varphi_R^2(y-\ell) \tilde{\Psi}_R(x-\ell) u(x) D^\beta_x \partial_x u(x) \, dx \, \frac{d\ell}{R}.
\end{align}
\end{proof}
\subsubsection{Proof of Lemma \ref{lem_Z1_bound}}
\label{proof_lem_Z1_bound}
\begin{proof}
The proof of this lemma will be organized into a sequence of steps:

 \underline{Step 1}: \emph{Obtaining estimates that only involve $\psi'_R$:}
    
    Following the discussion in Section 4, we rely on integration by parts and commutator estimates to obtain a truncated version of the monotonicity formula. We expect at least one derivative of $\psi_R$ in all terms, due to the skew-symmetry of the operator $D^\alpha \partial_x$.  For the sake of readability, we often omit the $t$ variable throughout the proof.
    
    By direct computation:
\begin{align}
    Z_1 = &\label{7.9}-\frac{1}{2}\iint \psi_R(y-x) \rho(x)\,
 D^{\alpha}_y\partial_y u(y) \, D^\alpha_y u(y)\, dy\, dx\\
 &-\frac{1}{2}\iint \psi_R(y-x) \rho(x)\,
u(y)\, D^{2\alpha}_y\partial_yu(y)\, dy dx\\ 
&-\frac{1}{2}
    \iint \psi_R(y-x) \rho(x)\,\partial_y(u^{k+1})(y)\, D^{\alpha}_y u(y)\, dy dx \\
&-\frac{1}{2}
    \iint \psi_R(y-x) \rho(x)\,u(y)\, D^{\alpha}_y\partial_y (u^{k+1})(y)\, dy dx \\
&-\iint \psi_R(y-x) \rho(x)\,u^{k+1}(y)\,D^{\alpha}_y \partial_y u (y)\, dy dx \\ 
&\label{7.13}-\iint \psi_R(y-x) \rho(x)\,u^{k+1}(y)\partial_y(u^{k+1})(y)\, dy dx.
\end{align}

Integrating \eqref{7.9} and \eqref{7.13} by parts, we obtain
\begin{align}
\label{A}Z_1
&=\frac{1}{4}\iint \psi_R'(y-x) \rho(x)\,
 (D^{\alpha}_yu)^2(y) \,dy\, dx\\
& \label{B}
 \quad-\frac{1}{2}\iint \psi_R(y-x) \rho(x)\,
u(y)\, D^{2\alpha}_y\partial_yu(y)\, dy \,dx\\ 
&\quad-\frac{1}{2}
    \iint \psi_R(y-x) \rho(x)\,\partial_y(u^{k+1})(y)\, D^{\alpha}_y u(y)\, dy \,dx\label{C}\\
&\quad-\frac{1}{2}
    \iint \psi_R(y-x) \rho(x)\,u(y)\, D^{\alpha}_y\partial_y (u^{k+1})(y)\, dy\, dx \label{D}\\
&\quad-\iint \psi_R(y-x) \rho(x)\,u^{k+1}(y)\,D^{\alpha}_y \partial_y u(y) \, dy\, dx \label{E} \\ 
&\quad +\frac{1}{2}\iint \psi_R'(y-x) \rho(x)\,u^{2k+2}(y)\, dy\, dx.
\end{align}

Estimating term \eqref{B} with Lemma \ref{lema_comutador_definitivo}, we have:
\begin{align}
\eqref{B}
&= \frac{2\alpha+1}{8} \iint \psi^\prime_R(y-x) \rho(x)   \left[(D^\alpha_yu)^2(y)+(\mathcal H_yD^\alpha_y u)^2(y)\right](y)\, dy\,dx \\
&\quad-\frac{\alpha(\alpha-1)(2\alpha+1)}{48} \iint \psi^{\prime \prime \prime}_R(y-x)\rho(x)  [(D^{\alpha-1}_y u)^2(y)+(\mathcal H_y D^{\alpha-1}_yu)^2(y) ]\,dy\,dx \\
& \quad - \frac{1}{4} \iint \rho(x)  u(y) R_{\alpha}[\psi_R(\cdot-x)] u (y)\, dy\,dx.
\end{align}







We now work on \eqref{C}, \eqref{D} and \eqref{E} 
Note that, via integration by parts,
\begin{equation}
    \eqref{C} = \frac{1}{2}
    \iint \psi_R'(y-x) \rho(x)\,u^{k+1}(y)\, D^{\alpha}_y u(y)\, dy\, dx -\frac{1}{2}\eqref{E}, 
\end{equation}
and by Lemma \ref{lema_comutador_definitivo},
\begin{align}
    \eqref{D} &=  \frac{1}{2}
    \iint  \rho(x)\, D^{\alpha}_y\partial_y (\psi_R(\cdot-x) u)(y)\,u^{k+1}(y)\, dy\, dx\\
    &= \frac{\alpha+1}{4}\iint \psi'_R(y-x)\rho(x)D^{\frac{\alpha}{2}}_yu(y)D^{\frac{\alpha}{2}}_y(u^{k+1})(y)\,dy\,dx\\
    &\quad -\frac{\alpha+1}{4} \iint\psi'_R(y-x) \rho(x)  D_y^{\frac{\alpha}{2}} \mathcal{H}_yu(y) D_y^{\frac{\alpha}{2}} \mathcal{H}_y(u^{k+1})(y)\, dy\, dx \\
    &\quad  +\frac{1}{2}\iint \rho(x) R_{\frac{\alpha}{2}}[\psi_R(\cdot-x)]u(y) u^{k+1}(y)\,dy\,dx - 
    \frac{1}{2}\eqref{E}.
\end{align}
Therefore, we conclude that
\begin{align}
    Z_1 &=\label{Z1_A1}\tag{$Z_{1,1}$}   \frac{2\alpha+3}{8} \iint \psi^\prime_R(y-x)\rho(x)   (D^\alpha_yu)^2(y)\, dy\,dx,  \\
    \label{Z1_A2} \tag{$Z_{1,2}$} &  \quad+\frac{2\alpha+1}{8} \iint \psi^\prime_R(y-x)\rho(x)   (\mathcal H D^\alpha_yu)^2(y)\, dy\,dx,\\
   \label{Z1_B1}\tag{$Z_{1,3}$}&\quad+\frac{1}{2}
    \iint \psi_R'(y-x) \rho(x)\,u^{k+1}(y)\, D^{\alpha}_y u(y)\, dy\, dx\\
    \label{Z1_C1}\tag{$Z_{1,4}$}   &\quad+\frac{\alpha+1}{4}\iint \psi'_R(y-x)\rho(x)D^{\frac{\alpha}{2}}_yu(y)D^{\frac{\alpha}{2}}_y(u^{k+1})(y)\,dy\,dx\\
    \label{Z1_D1}\tag{$Z_{1,5}$}&\quad- \frac{\alpha+1}{4} \iint  \psi^\prime_R(y-x) \rho(x) D_y^{\frac{\alpha}{2}} \mathcal{H}_yu(y) D_y^{\frac{\alpha}{2}} \mathcal{H}_y(u^{k+1})(y)\, dy\, dx \\
    \label{Z1_E1}\tag{$Z_{1,6}$}&\quad+\frac{1}{2}\iint \psi_R'(y-x) \rho(x)\,u^{2k+2}(y)\, dy\, dx\\
    &\quad+Z_{1,e},
    \end{align}
where we claim that 
    \begin{align}
Z_{1,e}&:= -\frac{\alpha(\alpha-1)(2\alpha+1)}{48} \iint \psi^{\prime \prime \prime}_R(y-x) \rho(x)   (D^{\alpha-1}_y u )^2(y) \,dy\,dx \\
&\quad-\frac{\alpha(\alpha-1)(2\alpha+1)}{48} \iint  \psi^{\prime \prime \prime}_R(y-x)\rho(x)(D^{\alpha-1}_y\mathcal{H}u)^2(y) \,dy\,dx\\
&\quad  +\frac{1}{2}\iint \rho(x)u^{k+1}(y) R_{\frac{\alpha}{2}}[\psi_R(\cdot-x)]u(y) \,dy\,dx\\& \quad - \frac{1}{4} \iint \rho(x)  u(y) R_{\alpha}[\psi_R(\cdot-x)] u (y)\, dy\,dx,
\end{align}
is an error term. Indeed, since 
\begin{equation}
 \|\psi'''_R\|_{L^\infty} + \sup_x\left\|R_\alpha[\psi_R(\cdot-x)]\right\|_{L^2 \to L^2}+\sup_x\left\|R_{\frac{\alpha}{2}}[\psi_R(\cdot-x)]\right\|_{L^2 \to L^2}\lesssim \frac{1}{R^{1/4}}   
\end{equation}
and
\begin{equation}
    \|\rho(x)\|_{L^\infty_tL^1_x} + \|D^{\alpha-1}_yu(y)\|_{L^{\infty}_tL^2_y}+ \|u(y)\|_{L^{\infty}_{t,y}}\lesssim_A 1,
\end{equation}
we have
\begin{equation}
    Z_{1,e}=O_{A}\left( \frac{1}{R^{1/4}}\right).
\end{equation}
\underline{Step 2}: \emph{Commuting localization and fractional derivatives:}

Following \cite{MR4009456}, we write $\varphi_{R,\ell} := \varphi_R(\cdot - \ell)$ and abbreviate
\begin{equation}
    \iiiint_{x,y,\ell,z} := \iiiint_{\mathbb{R}^3 \times \mathbb{R}_+} dx\,dy\,\frac{d\ell}{R}\,dz,
\end{equation}
with analogous conventions for triple, double and single integrals.
 
Here, we aim to show that
\begin{align}
    Z_1(t) \geq (1-\varepsilon)
    \iiint_{x,y,\ell}  \rho[\varphi_{R,\ell}u](x)
  \k[\varphi_{R,\ell}u](y) + O_{A}\left(\varepsilon+\frac{C_\varepsilon}{R^{1/4}}\right).
\end{align}
That is, the goal is to switch from $\varphi_{R,\ell}^2\, \rho[u]$ and $\varphi_{R,\ell}^2\, \k[u]$ to $\rho[\varphi_{R,\ell}\, u]$ and $\k[\varphi_{R,\ell}\, u]$, respectively. 

We start by recalling \eqref{psi_prime} and writing the integral in \eqref{Z1_A1} as
\begin{align}
    \iint_{x,y} \psi_R'(y-x) \rho(x)\,
 (D^{\alpha}_yu)^2(y)  = \iiint_{x,y,\ell} (\varphi_{R,\ell} u)^2(x) (\varphi_{R,\ell}
 D^{\alpha}_yu)^2(y)
\end{align}


Now, by Lemma \ref{lem:Li-commutator}, together with Young's inequality and the immediate bound
\begin{equation}\label{commutator_argenis}
    \sup_{\ell \in \mathbb R}\|[D^{\alpha};\varphi_{R,\ell}]u\|_{L^2} \lesssim_A \frac{1}{R^{1/4}},
\end{equation}  
we have
\begin{align}
 \int_y (\varphi_{R,\ell}(y)
 D^{\alpha}_yu)^2(y) &=
 \int_y (D^{\alpha}_y(\varphi_{R,\ell}
 u))^2(y)
 -2 \int_y D^{\alpha}_y \left(\varphi_{R,\ell}
 u \right)[D^\alpha_y; \varphi_{R,\ell}]u(y) + \int_y \left([D^\alpha_y;\varphi_{R,\ell}]u\right)^2(y) \\
 &\geq (1-\varepsilon)\int_y (D^{\alpha}_y(\varphi_{R,\ell}
 u))^2(y)- C_{\varepsilon}  \int_y \left([D^\alpha_y; \varphi_{R,\ell}]u\right)^2(y) \\
 &= (1-\varepsilon)\int_y(D^{\alpha}_y(\varphi_{R,\ell}
 u))^2(y) + O_{A}\left(\frac{C_{\varepsilon}}{R^{1/4}}\right).
\end{align}

We then estimate \eqref{Z1_A1} as
\begin{align}
    \eqref{Z1_A1} 
 &\geq  \frac{(1-\varepsilon)(2\alpha+3)}{8}\iiint_{x,y,\ell}  (\varphi_{R,\ell} u)^2(x)\,(D^{\alpha}_y(\varphi_{R,\ell}
 u))^2(y))+  O_{A}\left(\frac{C_\varepsilon}{R^{1/4}}\int u^2(x)\left(\int \varphi^2_R(x-\ell)\,\frac{d\ell}{R}\right)\,dx \right)\\
 &\label{final_Z1_A1}=  \frac{(1-\varepsilon)(2\alpha+3)}{8}\iiint_{x,y,\ell}  (\varphi_{R,\ell} u)^2(x)\,(D^{\alpha}_y(\varphi_{R,\ell}
 u))^2(y)+ O_{A}\left(\frac{C_\varepsilon}{R^{1/4}}\right).
\end{align}

Similarly, again due to Lemma \ref{lem:Li-commutator} 
\begin{align}\label{commutator_hilbert_smooth}
        \sup_{\ell \in \mathbb R}\|[\mathcal{H}D^{\alpha};\varphi_{R,\ell}]u\|_{L^2} \lesssim_A \frac{1}{R^{1/4}},
\end{align}
we have, for \eqref{Z1_A2}
\begin{equation}\label{final_Z1_A2}
    \eqref{Z1_A2} \geq  \frac{(1-\varepsilon)(2\alpha+1)}{8}\iiint_{x,y,\ell}  (\varphi_{R,\ell} u(x))^2\,(D^{\alpha}_y(\varphi_{R,\ell}
 u(y)))^2+ O_{A}\left(\frac{C_\varepsilon}{R^{1/4}}\right)
\end{equation}

We now estimate the integral in \eqref{Z1_B1} by using the \textit{almost} non-negativity estimate \eqref{almost_non_negativity}:
\begin{align}\label{Z1_C1_interm1}
    \iint_{x,y} \psi_R'(y-x) \rho(x)\,u^{k+1}(y)\, D^{\alpha}_y u(y) &=\iiint_{x,y,\ell}   \varphi^2_R(x-\ell) \rho(x)\,\varphi^2_R(y-\ell)u^{k+1}(y)\, D^{\alpha}_y u(y)\\
    & \geq  \int_{\ell} \left(\int_x \varphi^2_{R,\ell}(x)u^2(x)\right) \, \left(\int_y \varphi^{k+1}_{R,\ell}(y)u^{k+1}(y)\, D^{\alpha}_y u(y)\,\right)  \\&\qquad + O_{A}\left(\|D^{\alpha}(\varphi^2_R-\varphi^{k+1}_R)\|_{L^\infty}\right)\\
    &=   \int_{\ell} \left(\int_x (\varphi_{R,\ell}u)^2(x) \right) \, \left(\int_y(\varphi_{R,\ell}u)^{k+1}(y)\, D^{\alpha}_y (\varphi_{R,\ell}u)(y)\right)\\
    &\quad -\int_x u^2(x)\left(\int_{\ell} \varphi_{R}^2(x-\ell)  \, \left(\int_y(\varphi_{R,\ell}u)^{k+1}(y)\, [D^{\alpha}_y;\varphi_{R,\ell}]u(y)\right) \right) \\&\quad+ O_{A}\left(\frac{1}{R^{1/4}}\right)\\
    &\label{C+D+E} =  \iiint_{x,y,\ell} (\varphi_{R,\ell}u)^2(x)(\varphi_{R,\ell}u)^{k+1}(y)\, D^{\alpha}_y (\varphi_{R,\ell}u)(y)\\
   &\qquad + O_{A}\left(\frac{1}{R^{1/4}}\right),
\end{align}
since, for all $s\in \R$,
\begin{align}
    \left|\int_y(\varphi_{R,\ell}u)^{k+1}(y)\, [D^{\alpha}_y;\varphi_{R,\ell}]u(y)\right| \leq \|u^{k+1}\|_{L^2}\|[D^{\alpha};\varphi_{R,\ell}]u\|_{L^2}\lesssim_A  \frac{1}{R^{1/4}}.
\end{align}
We then conclude, for \eqref{Z1_B1},
\begin{equation}\label{final_Z1_B1}
        \eqref{Z1_B1} \geq \frac{1}{2} \eqref{C+D+E}  + O_{A}\left(\frac{1}{R^{1/4}}\right).
\end{equation}
To estimate \eqref{Z1_C1}, we write
\begin{align}
    \iint_{x,y}  \psi^\prime_R(y-x)\rho(x) D_y^{\frac{\alpha}{2}} u(y) D_y^{\frac{\alpha}{2}} (u^{k+1})(y)=\iint_{x,y,s}  (\varphi_{R,\ell}u)^2(x)\varphi_{R,\ell}^2(y) D_y^{\frac{\alpha}{2}} u(y) D_y^{\frac{\alpha}{2}} (u^{k+1})(y).
\end{align}
Now, since
\begin{align}
    \int_y\varphi_{R,\ell}^2(y) D_y^{\frac{\alpha}{2}} u(y) D_y^{\frac{\alpha}{2}} (u^{k+1})(y) &= \int_y D_y^{\frac{\alpha}{2}}(\varphi_{R,\ell}^2 D_y^{\frac{\alpha}{2}} u)(y)  \,u^{k+1}(y) \\
    &=\int_y \varphi_{R,\ell}^2(y) D_y^{\alpha} u(y)\,  u^{k+1}(y) + \int_y  u^{k+1}(y) \,[D^{\frac{\alpha}{2}};\varphi_{R,\ell}] D_y^{\frac{\alpha}{2}}u(y),
\end{align}
we have
\begin{equation}\label{final_Z1_C1}
    \eqref{Z1_C1}\geq \frac{\alpha+1}{4}\eqref{C+D+E}+O_{A}\left(\frac{1}{R^{1/4}}\right).
\end{equation}
Analogously,
 \begin{equation}\label{final_Z1_D1}
     \eqref{Z1_D1}\geq \frac{\alpha+1}{4}\eqref{C+D+E} + O_{A}\left(\frac{1}{R^{1/4}}\right).
 \end{equation}
Finally, since
 \begin{align}
     \iint_{x,y} \psi_R'(y-x) \rho(x)\,u^{2k+2}(y) = \iiint_{x,y,\ell} \varphi_{R,\ell}^2(x) u^2(x) \varphi_{R,\ell}^2 (y)u^{2k+2}(y) \geq \iiint_{x,y,\ell} (\varphi_{R,\ell} u)^2(x) (\varphi_{R,\ell} u)^{2k+2}(y),
 \end{align}
one can bound \eqref{Z1_E1} as
\begin{equation}\label{final_Z1_E1}
    \eqref{Z1_E1} \geq \frac{1}{2}\iiint_{x,y,\ell} (\varphi_{R,\ell} u)^2(x) (\varphi_{R,\ell} u)^{2k+2}(y).
\end{equation}
    Combining the estimates \eqref{final_Z1_A1}, \eqref{final_Z1_A2}, \eqref{final_Z1_B1}, \eqref{final_Z1_C1}, \eqref{final_Z1_D1}, \eqref{final_Z1_E1}, and the immediate bound
    \begin{equation}
        |\eqref{C+D+E}| + |\eqref{final_Z1_E1}| \lesssim_A 1
    \end{equation}
    we obtain
\begin{align}
    Z_1 \geq (1-\varepsilon)\iiint_{x,y,\ell} \rho[\varphi_{R,\ell}u](x)
  \k[\varphi_{R,\ell}u](y) + O_{A}\left(\varepsilon+\frac{C_\varepsilon}{R^{1/4}}\right).
\end{align}
\end{proof}
\subsubsection{Proof of Lemma \ref{lem_Z2_bound}}
\label{proof_lem_Z2_bound}

As in Lemma \ref{lem_Z1_bound}, we divide the proof into two steps. In contrast with $\rho = u^2$, here the integral of $|e|$ might not be bounded, since $e$ is not necessarily pointwise nonnegative, thus requiring us to invoke Lemma \ref{localization_lemma}. Moreover, passing from $\varphi_{R,\ell}^2e[u]$ to $e[\varphi_{R,\ell}u]$ involves employing more commutator estimates.

\underline{Step 1}:
Observe that, by the formula \eqref{claim_tilde_Phi} and the Lemma \ref{lema_comutador_definitivo},
\begin{align}
Z_2(t) 
&= -2 \iint_{x,y} \psi_R(y-x)\, e(y)   \, u(x)D^\alpha_x \partial_x u(x)- \frac{2(k+1)}{k+2} \iint_{x,y} \psi_R(y-x) \,e(y)   \, \partial_x(u^{k+2})(x)  \\
&= 2  \iiint_{x,y,\ell} \varphi_{R,\ell}^2(y)\, e(y)\tilde{\Psi}_R(x-\ell) u(x) D^\alpha_x \partial_x u(x) -\frac{2(k+1)}{k+2} \iint_{x,y}\psi_R^\prime(y-x) e(y)   \, u^{k+2}(x) \\
&\label{Z_21}\tag{$Z_{2,1}$}=-\frac{\alpha+1}{4} \iiint_{x,y,\ell}  \varphi_{R,\ell}^2(y)u(y)D^{\alpha}_y u(y)   \, 
(\varphi_{R,\ell}D^{\frac{\alpha}{2}}_x u)^2 (x)  \\
& \label{Z_22}\tag{$Z_{2,2}$}\quad + \frac{\alpha+1}{4} \iiint_{x,y,\ell} \varphi_{R,\ell}^2(y)u(y)D^{\alpha}_y u(y)   \, 
(\varphi_{R,\ell}\mathcal H D^{\frac{\alpha}{2}}_x u)^2 (x)  \\
&\label{Z_23}\tag{$Z_{2,3}$}\quad-\frac{\alpha+1}{2(k+2)} \iiint_{x,y,\ell}  \varphi_{R,\ell}^2(y)u^{k+2}(y)   \, 
(\varphi_{R,\ell}D^{\frac{\alpha}{2}}_x u)^2 (x)   \\
&\label{Z_24}\tag{$Z_{2,4}$}\quad+\frac{\alpha+1}{2(k+2)} \iiint_{x,y,\ell}  \varphi_{R,\ell}^2(y) u^{k+2}(y)  \, 
(\varphi_{R,\ell}\mathcal H D^{\frac{\alpha}{2}}_x u)^2 (x)  \\
&\label{Z_25}\tag{$Z_{2,5}$}\quad -\frac{k+1}{k+2} \iiint_{x,y,\ell}  \varphi_{R,\ell}^2 u(y)D^{\alpha}_y u(y)\,\varphi_{R,\ell}^2(x) u^{k+2}(x)  \\
&\label{Z_26}\tag{$Z_{2,6}$}\quad -\frac{2(k+1)}{(k+2)^2} \iiint_{x,y,\ell} \varphi_{R,\ell}^2(y) u^{k+2}(y)\, \varphi_{R,\ell}^2(x) u^{k+2}(x) \\
&\label{Z_2e}\tag{$Z_{2,e}$}\quad -   \iiint_{x,y,\ell}  \varphi_{R,\ell}^2(y) e(y) u(x) R_{\frac{\alpha}{2}}(\tilde \Psi_R(\cdot-\ell)) u(x).
\end{align}
We claim that \eqref{Z_2e} is an error term. Indeed, we have that
\begin{equation}
   \left\|\int_{\ell} \varphi_{R,\ell}^2(y)\int_x   \varphi_{R,\ell}^2(x) u(x) R_{\frac{\alpha}{2}}[\tilde \Psi_R(\cdot-\ell)] u(x) \right\|_{W^{1,\infty}_y} \lesssim \frac{\|u_0\|_{L^2}}{R^{1/4}},
\end{equation}
which implies, by the localization Lemma \ref{localization_lemma}
\begin{equation}\label{Z_2e_final}
    |\eqref{Z_2e}| \lesssim_A \frac{1}{R^{1/4}}.
\end{equation}

\underline{Step 2}: We begin by estimating the term \eqref{Z_21}. Note that
\begin{align}
\eqref{Z_21} &=  -\frac{\alpha+1}{4}\iiint_{x,y,\ell} \varphi_{R,\ell}^2(y) u(y) D^\alpha_y u(y) \, 
(\varphi_{R,\ell}D^{\frac{\alpha}{2}}_x u)^2 (x)  \\
&\label{Z_211}\tag{$Z_{2,1,1}$}= -\frac{\alpha+1}{4}\iiint_{x,y,\ell}  (\varphi_{R,\ell} u)(y) \,D^\alpha_y (\varphi_{R,\ell}u)(y)\, 
(D^{\frac{\alpha}{2}}_x (\varphi_{R,\ell}u))^2 (x) \\
&\label{Z_212}\tag{$Z_{2,1,2}$}\qquad+  \frac{\alpha+1}{2}  \iiint_{x,y,\ell}  (D^{\frac{\alpha}{2}}_y (\varphi_{R,\ell}u))^2(y) [D^{\frac{\alpha}{2}}_x; \varphi_{R,\ell}^2]u(x)   \, 
D^{\frac{\alpha}{2}}_x (\varphi_{R,\ell}u) (x) \\
&\label{Z_213}\tag{$Z_{2,1,3}$} \qquad -\frac{\alpha+1}{4}\iiint_{x,y,\ell}  (D^{\frac{\alpha}{2}}_y (\varphi_{R,\ell}u))^2(y) ([D^{\frac{\alpha}{2}}_x; \varphi_{R,\ell}^2]u)^2(x)   \, 
 \\
  &\label{Z_214}\tag{$Z_{2,1,4}$}\qquad +\frac{\alpha+1}{4}\iiint_{x,y,\ell}  \varphi_{R,\ell} u(y)[D^\alpha_y;\varphi_{R,\ell}] u(y)   \, 
(\varphi_{R,\ell}D^{\frac{\alpha}{2}}_x u)^2 (x).
\end{align}
where \eqref{Z_211} will be treated as a main term. To estimate \eqref{Z_212} and \eqref{Z_213}, we recall
\begin{equation}\label{bound_for_A3}
    \sup_{s\in \mathbb R} \|[D^{\frac{\alpha}{2}}; \varphi_{R,\ell}^2]u\|_{L^2} \lesssim _E\frac{1}{R^{1/4}}
\end{equation}
Now, we invoke the fractional extension of order $\frac{\alpha}{2}$ to write
\begin{align}
\iint_{y,s} D^{\frac{\alpha}{2}}_y (\varphi_R(y-\ell)u(y)))^2 &= \iiint_{y,z,\ell}  |\nabla_{y,z} (\Phi_R(y-\ell,z)U(y,z))|^2 \,z^{1-\frac{\alpha}{2}}\\
   &\label{0.101}=\iint_{y,z} |\nabla_{y,z} U(y,z)|^2\,z^{1-\frac{\alpha}{2}}\int_{\ell} \Phi_R^2(y-\ell,z) \, 
  \\ 
  &\label{0.103}\,+\iiint_{y,z,\ell}  U^2(y,z)|\nabla_{y,z} \Phi_R(y-\ell,z)|^2 \,z^{1-\frac{\alpha}{2}}\, 
\\
\label{0.102}&\,+2\iiint_{y,z,\ell} \hspace{-1mm} \left(\Phi_R(y-\ell,z) \nabla_{y,z} U(y,z)\right) \cdot \left(U(y,z)\nabla_{y,z}\Phi_R(y-\ell,z)\right)  \,z^{1-\frac{\alpha}{2}}.
\end{align}
Note that
\begin{align}
    \int_{\ell} \Phi_R^2(y-\ell,z) \lesssim \|\varphi_R\|_{L^\infty}\int_\ell\Phi_R(y-\ell,z)  
=\int_w\left( P_z(w)\int_\ell \varphi_R(y-\ell-w)  \right)
 \lesssim  \int_w P_z(w)\lesssim1.
\end{align}
Thus, \eqref{0.101} is estimated by
\begin{align}
    |\eqref{0.101}|\lesssim \iint_{y,z} |\nabla_{y,z} U(y,z)|^2\,z^{1-\frac{\alpha}{2}}= \|D^{\frac{\alpha}{2}}u\|_{L^2}^2.
\end{align}
We now turn to \eqref{0.103}. Observe, by the change of variables $s = y-\ell$:
\begin{align}
    |\eqref{0.103}| &\lesssim \int_y \left(\mathcal M (u)(y)\right)^2\iint_{z,\ell} |\nabla_{y,z} \Phi_R(y-\ell,z)|^2 \,z^{1-\frac{\alpha}{2}}\\
   &=\int_y \left(\mathcal M (u)(y)\right)^2\iint |\nabla_{s,z} \Phi_R(s,z)|^2 \,z^{1-\frac{\alpha}{2}}\,\frac{ds}{R}\,dz\\
   &=\|D^{\frac{\alpha}{2}}\varphi_R\|^2_{L^2}\int_y \left(\mathcal M (u)(y)\right)^2\,\\
   &\lesssim \frac{\|u_0\|_{L^2}^2}{R^{\alpha-1}}.
\end{align}
Finally, \eqref{0.102} is estimated, via Young's inequality, by $|\eqref{0.101}| + |\eqref{0.103}|$. Note that such terms could not be simply estimated by the fractional Leibniz rule since the necessary localization for the integration in $\ell$ would be lost. 

Therefore, by Cauchy-Schwarz in the $x$-variable, the fractional Leibniz rule and \eqref{bound_for_A3}, we obtain $|\eqref{Z_212}| \lesssim \frac{1}{R^{1/4}}$. In a similar fashion, we have $|\eqref{Z_213}| \lesssim\frac{1}{R^{1/4}}$. Finally, $\eqref{Z_214}$ is estimated by \eqref{commutator_argenis} as 
\begin{align}
    |\eqref{Z_214}| \lesssim  \frac{1}{R^{\alpha}}\int_x \left[(D^{\frac{\alpha}{2}}_x u)^2 (x) \int_{\ell}  \, 
\varphi_{R,\ell}^2  \right] \lesssim_A \frac{1}{R^{1/4}}.
\end{align}
We then conclude
\begin{equation}\label{Z_21_final}
    \eqref{Z_21} = \eqref{Z_211} + O_{A}\left(\frac{1}{R^{1/4}}\right).
\end{equation}
Analogously, by \eqref{commutator_hilbert_smooth}, we have, for $Z_{2,2}$,
\begin{equation}\label{Z_22_final}
    \eqref{Z_22} =  \eqref{Z_211} + O_{A}\left(\frac{1}{R^{1/4}}\right).
\end{equation}
To deal with $Z_{2,3}$, we write
\begin{align}
    \eqref{Z_23}&= -\iiint_{x,y,\ell}  \varphi_{R,\ell}^2(y) u^{k+2}(y) \,(D^{\frac{\alpha}{2}}_x (\varphi_{R,\ell}u) (x)-[D^{\frac{\alpha}{2}};\varphi_{R,\ell}]u(x))^2  \\
    &=-\iiint_{x,y,\ell}  \varphi_{R,\ell}^2(y)\, u^{k+2}(y) \,(D^{\frac{\alpha}{2}}_x (\varphi_{R,\ell}u))^2 (x)\\
    &\quad+ 2 \iiint_{x,y,\ell}  \varphi_{R,\ell}^2(y) u^{k+2}(y) \,D^{\frac{\alpha}{2}}_x (\varphi_{R,\ell}u) (x)[D^{\frac{\alpha}{2}};\varphi_{R,\ell}]u(x) \\
    &\quad - \iiint_{x,y,\ell}  \varphi_{R,\ell}^2(y)\, u^{k+2}(y) ([D^{\frac{\alpha}{2}};\varphi_{R,\ell}]u)^2(x) \\
    &\label{Z_231}\tag{$Z_{2,3,1}$}=-  \iiint_{x,y,\ell}  (\varphi_{R,\ell}u)^{k+2}(y) \,(D^{\frac{\alpha}{2}}_x (\varphi_{R,\ell}u))^2 (x) \\
    &\label{Z_232}\tag{$Z_{2,3,2}$}\quad +2 \iiint_{x,y,\ell}  \varphi_{R,\ell}^2(y) u^{k+2}(y) \,D^{\frac{\alpha}{2}}_x (\varphi_{R,\ell}u) (x)[D^{\frac{\alpha}{2}};\varphi_{R,\ell}]u(x)  \\
    &\label{Z_233}\tag{$Z_{2,3,3}$}\quad - \iiint_{x,y,\ell}  \varphi_{R,\ell}^2(y)\, u^{k+2}(y) ([D^{\frac{\alpha}{2}};\varphi_{R,\ell}]u)^2(x)  \\
    &\label{Z_234}\tag{$Z_{2,3,4}$}\quad -\iiint_{x,y,\ell}  (\varphi_{R,\ell}^2-\varphi_{R,\ell}^{k+2})(y) u^{k+2}(y) \,(D^{\frac{\alpha}{2}}_x (\varphi_{R,\ell}u))^2 (x) ,
\end{align}
with \eqref{Z_231} being the main term. We then estimate the error terms which contain commutators by
\begin{equation}
    |\eqref{Z_232}| + |\eqref{Z_233}| \lesssim_A \frac{1}{R^{1/4}}.
\end{equation}
To estimate $\eqref{Z_234}$, we rely on the support properties of $\varphi_R$ to write
\begin{equation}\label{support_phi_bound}
    \int_{\ell} [\varphi^2_R(\ell)-\varphi^{k+2}_R(\ell)] = \int_{R}^{R + R^{3/4}} [\varphi^2_R(\ell)-\varphi^{k+2}_R(\ell)] \, \frac{d\ell}{R} \lesssim \frac{1}{R^{1/4}}.
\end{equation}
Therefore,
\begin{align}
    |\eqref{Z_234}| &\leq \int_y u^{k+2}(y)\int_{\ell}  (\varphi_{R,\ell}^2-\varphi_{R,\ell}^{k+2})  \,\|D^{\frac{\alpha}{2}} (\varphi_{R,\ell}u)\|_{L^2}^2 \lesssim \|u\|_{L^\infty_tH^{\frac{\alpha}{2}}_x} \|u\|^{k+2}_{L^\infty_t L^{k+2}_y} \frac{1}{R^{1/4}},
\end{align}
leading to
\begin{equation}\label{Z_23_final}
    \eqref{Z_23} = \eqref{Z_231} + O_{A}\left(\frac{1}{R^{1/4}}\right).
\end{equation}

The term $\eqref{Z_24}$, since it contains terms with $\mathcal HD^{\frac{\alpha}{2}}$, is treated in a completely analogous way, leading to
\begin{align}\label{Z_24_final}
    \eqref{Z_24}= \eqref{Z_231} + O_{A}\left(\frac{1}{R^{1/4}}\right).
\end{align}
Turning to \eqref{Z_25}, we have
\begin{align}
    \eqref{Z_25} &\label{Z_251}\tag{$Z_{2,5,1}$}= -\frac{k+1}{k+2} \iiint_{x,y,\ell} \varphi_{R,\ell} u(y)D^{\alpha}_y(\varphi_{R,\ell}u)(y)\,(\varphi_{R,\ell} u)^{k+2}(x) \\
    &\label{Z_252}\tag{$Z_{2,5,2}$}\quad+\frac{k+1}{k+2}\iiint_{x,y,\ell} \varphi_{R,\ell}u(x)[D^{\alpha}_x;\varphi_{R,\ell}] u(x) 
    \varphi_{R,\ell}^{k+2}\,u^{k+2}(y) \\
    &\label{Z_253}\tag{$Z_{2,5,3}$}\quad- \frac{k+1}{k+2}\iiint_{x,y,\ell} \varphi_{R,\ell}^2u(x)D^{\alpha}_xu(x) (\varphi_{R,\ell}^2 - \varphi_{R,\ell}^{k+2})(y)\,u^{k+2}(y).
\end{align}
\noeqref{Z_251}   
The commutator in \eqref{Z_252} yields the bound
\begin{equation}
    |\eqref{Z_252}|\lesssim_A\frac{1}{R^{1/4}},
\end{equation}
while, for \eqref{Z_253}, we rely on the support properties of $\varphi_R$ to write
\begin{equation}
    \int_{\ell} [\varphi^2_R(\ell)-\varphi^{k+2}_R(\ell)] = \int_{R}^{R + R^{3/4}} [\varphi^2_R(\ell)-\varphi^{k+2}_R(\ell)] \, \frac{d\ell}{R} \lesssim \frac{1}{R^{1/4}},
\end{equation}
thus obtaining
\begin{equation}
    |\eqref{Z_253}| \lesssim_A \frac{1}{R^{1/4}},
\end{equation}
and
\begin{equation}\label{Z_25_final}
    \eqref{Z_25} = \eqref{Z_251}+ O_{A}\left(\frac{1}{R^{1/4}}\right).
\end{equation}
Finally, for \eqref{Z_26},
\begin{align}
    \eqref{Z_26}& = -\frac{2(k+1)}{(k+2)^2} \iiint_{x,y,\ell}  \varphi_{R,\ell}^2(y)\, u^{k+2}(y) \,\varphi_{R,\ell}^2(x)\, u^{k+2}(x) \\
    &\label{Z_261}\tag{$Z_{2,6,1}$}=-\frac{2(k+1)}{(k+2)^2} \iiint_{x,y,\ell}  (\varphi_{R,\ell} u)^{k+2}(y) \,(\varphi_{R,\ell} u)^{k+2}(x) \\
    &\label{Z_262}\tag{$Z_{2,6,2}$}\,\,\quad - \frac{2(k+1)}{(k+2)^2}\iiint_{x,y,\ell} (\varphi_{R,\ell}^2-\varphi_{R,\ell}^{k+2})(y)\,u^{k+2}(y) \varphi_{R,\ell}^{k+2}(x)u^{k+2}(x) \\
    &\label{Z_263}\tag{$Z_{2,6,3}$}\,\, \quad - \frac{2(k+1)}{(k+2)^2}\iiint_{x,y,\ell} \varphi_{R,\ell}^2(y)\,u^{k+2}(y) (\varphi_{R,\ell}^2 - \varphi_{R,\ell}^{k+2})(x)\,u^{k+2}(x).
\end{align}
\noeqref{Z_261}
The support properties of $\varphi_R$ let us write again
\begin{equation}
        |\eqref{Z_262}|+|\eqref{Z_263}| \lesssim_A \frac{1}{R^{1/4}}
\end{equation}
to conclude
\begin{equation}\label{Z_26_final}
    \eqref{Z_26} = \eqref{Z_261} +  O_{A}\left(\frac{1}{R^{1/4}}\right).
\end{equation}



Therefore, by \eqref{Z_2e_final}, \eqref{Z_21_final}, \eqref{Z_22_final}, \eqref{Z_23_final}, \eqref{Z_24_final}, \eqref{Z_25_final}, and \eqref{Z_26_final},
\begin{equation}
    Z_2(t) = -\iiint_{x,y,\ell} e[\varphi_{R,\ell}u](y)j[\varphi_{R,\ell}u](x)+O_{A}\left(\frac{1}{R^{1/4}}\right).
\end{equation}
Finally, arguing in the same fashion as in the estimate of \eqref{0.103}, we have
$$
\left|
\iiint_{x,y,\ell}
e[\varphi_{R,\ell}u](y)\,
j[\varphi_{R,\ell}u](x)
\right|
\lesssim_A 1,
$$
therefore
\begin{align}
     Z_2(t) &= -\iiint_{x,y,\ell} e[\varphi_{R,\ell}u](y)j[\varphi_{R,\ell}u](x)+O_{A}\left(\frac{1}{R^{1/4}}\right)\\
     &=-(1-\varepsilon)\iiint_{x,y,\ell} e[\varphi_{R,\ell}u](y)j[\varphi_{R,\ell}u](x) - \varepsilon\iiint_{x,y,\ell} e[\varphi_{R,\ell}u](y)j[\varphi_{R,\ell}u](x)+O_{A}\left(\frac{1}{R^{1/4}}\right)\\
     &=-(1-\varepsilon)\iiint_{x,y,\ell} e[\varphi_{R,\ell}u](y)j[\varphi_{R,\ell}u](x) +O_{A}\left(\varepsilon+\frac{1}{R^{1/4}}\right).
\end{align}
This finishes/completes the proof.
\qed
\subsection{Closure}
Finally, in this section we complete the proof of our main result by showing that the critical solution constructed in Section~\ref{sec-6} does not exist.
\begin{teo}
There does not exist a solution $u$ to \eqref{DGBO} satisfying the conclusion of Proposition \ref{prop:nontrivial_localization}.
\end{teo}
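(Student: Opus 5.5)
Suppose, for contradiction, that $u$ is a solution satisfying \eqref{eq:nontrivial_localization}, with $x(t)$, $\tilde R=\tilde R(u)$ and
\[
\eta_0:=\inf_{t\ge0}\int_{|x-x(t)|\le\tilde R}|u(t,x)|^{k+2}\,dx>0 .
\]
The plan is to integrate $Z_R'(t)=Z_1(t)+Z_2(t)$ over $[0,T]$. We then compare the resulting linear-in-$T$ lower bound with the uniform bound $\sup_t|Z_R(t)|\lesssim_A R$ from Lemma \ref{localization_lemma}.

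\textbf{Step 1 (pointwise lower bound).} Combine Lemmas \ref{lem_Z1_bound} and \ref{lem_Z2_bound}. Since $\rho[\varphi_{R,\ell}u]\ge0$, and since the terms $\iiint \rho\,\k$ and $\iiint e\,j$ are bounded by $O_A(1)$ (the latter as already noted at the end of the proof of Lemma \ref{lem_Z2_bound}), we obtain
\[
Z_R'(t)\ \ge\ (1-\varepsilon)\int_\ell\Big(\int\rho[\varphi_{R,\ell}u]\int\k[\varphi_{R,\ell}u]-\int e[\varphi_{R,\ell}u]\int j[\varphi_{R,\ell}u]\Big)\frac{d\ell}{R}+O_A\Big(\varepsilon+\frac{C_\varepsilon}{R^{1/4}}\Big).
\]
Apply Theorem \ref{teo_monotonicity} to $v=\varphi_{R,\ell}u(t)$ for each $\ell$. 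This is legitimate since by persistence of regularity $u$ is smooth, and the bounds depend only on $A$. It bounds the bracket below by $\frac{k^2}{2(k+2)^2}\big(\int(\varphi_{R,\ell}u)^{k+2}\big)^2$.

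\textbf{Step 2 (use compactness).} Take $R\ge 2\tilde R$. For $\ell$ with $|\ell-x(t)|\le R/2$ we have $\varphi_{R,\ell}\equiv1$ on $\{|x-x(t)|\le\tilde R\}$. Since $k$ is even, $u^{k+2}\ge0$, so $\int(\varphi_{R,\ell}u)^{k+2}\ge\eta_0$ there. This set of $\ell$ has $d\ell/R$-measure $1$, so
\[
Z_R'(t)\ge \frac{(1-\varepsilon)k^2}{2(k+2)^2}\,\eta_0^2 - C_A\Big(\varepsilon+\frac{C_\varepsilon}{R^{1/4}}\Big).
\]
Now choose $\varepsilon=\varepsilon(A,\eta_0)$ small and then $R$ large, so that $Z_R'(t)\ge c\,\eta_0^2$ for all $t\ge0$, with $c>0$ depending only on $k$.

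\textbf{Step 3 (contradiction).} Integrating gives $Z_R(T)-Z_R(0)\ge c\eta_0^2T$. On the other hand, Lemma \ref{localization_lemma} gives $|Z_R(T)-Z_R(0)|\lesssim_A R$ uniformly in $T$. Letting $T\to\infty$ with $R$ fixed yields a contradiction.

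\textbf{Expected obstacle.} The delicate point is Step 1. One must ensure that the errors in Lemmas \ref{lem_Z1_bound} and \ref{lem_Z2_bound} are uniform in $t$ and in the choice of smooth approximants, and that multiplying the $\varepsilon$-loss by the possibly sign-indefinite term $\int e\int j$ only costs $O_A(\varepsilon)$. I would check this by bounding $\int e[\varphi_{R,\ell}u]$ via Lemma \ref{localization_lemma} and $\int j[\varphi_{R,\ell}u]$ by $\|u\|_{H^{\alpha/2}}$, uniformly in $\ell$. The application of the monotonicity formula needs $\varphi_{R,\ell}u\in H^{2\alpha}$. This follows from smoothness, and the final inequality is stable under the density argument since only $A$-dependent constants appear.
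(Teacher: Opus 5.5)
Your proposal is correct and follows the paper's route. You add Lemmas \ref{lem_Z1_bound} and \ref{lem_Z2_bound}, apply Theorem \ref{teo_monotonicity} to each $\varphi_{R,\ell}u$, use the localization to get a uniform positive lower bound on $Z_R'$, and contradict $\sup_t|Z_R(t)|\lesssim_A R$; your $\eta_0^2$ bookkeeping is in fact cleaner than the paper's display. One small correction: $\iiint\rho\,\kappa$ is not $O_A(1)$, because $\int\kappa[v]$ contains $\frac{\alpha+1}{2}\|D^{\alpha}v\|_{L^2}^2$, which the conserved quantities do not control. You do not need this bound, though: both lemmas carry the same factor $(1-\varepsilon)$, so adding them gives your displayed inequality directly.
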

\begin{proof}
Suppose $u\in C_t H^{\frac{\alpha}{2}}_x(\R \times \R)$ is a solution to \eqref{DGBO} such that there exist  $\tilde R(u)>0$ and $\varepsilon_0(u)>0$ independent on time, and a function $x: \R_+ \to \R$, such that  \begin{equation}\label{eq:nontrivial_localization_closure}
		\inf_{t \geq 0}\;\int_{|x-x(t)|\leq \tilde R(u)} |u(t,x)|^{k+2} \, dx \geq \varepsilon_0(u)>0.
	\end{equation}

Let $R \geq 2\tilde R(u)$ and let $Z_R(t)$ be the localized interaction functional defined in
\eqref{Z_def}. By the Localization Lemma (Lemma \ref{localization_lemma}) we have
$$
\sup_{t\ge0} |Z_R(t)| \lesssim_A R,
$$
On the other hand, by Lemma~\ref{lem_Z1_bound}, Lemma~\ref{lem_Z2_bound} and Theorem~\ref{teo_monotonicity}, we have
$$
Z_R'(t) \geq \int_{|\ell|\leq R} \int_{|x-x(t)|\leq 2R } |u(x+\ell)|^{2k+2}\, dx \, \frac{d\ell}{R} \geq \varepsilon_0(u) +O_A\left(\varepsilon+\frac{C_\varepsilon}{R^{1/4}}\right)
\quad \text{for all } t\ge0.
$$

By choosing $0<\varepsilon\ll_A \varepsilon_0(u)$ and increasing $R$ once, if necessary, we conclude $ t \lesssim_u |Z_R(t)|+|Z_R(0)| \lesssim R$ for all $t \geq 0$, which is a contradiction.
\end{proof}

\noindent \textbf{Declarations:}

\noindent \textbf{Ethics approval and consent to participate:}
Not applicable.


\noindent \textbf{Availability of data and materials:}
Not applicable (this manuscript does not report data generation or analysis).

\noindent \textbf{Conflicts of interest/Competing interests:}
The authors have no conflicts of interest to declare that are relevant to the content of this article.

\noindent \textbf{Author's contributions:} The authors Campos, L.,  Linares, F. and Santos, T.S.R.  wrote and reviewed the manuscript.

\bibliographystyle{abbrv}
\bibliography{biblio.bib}

{{
  \footnotesize
\vspace{1cm}
L. Campos, \textsc{Departamento de Matemática, UFMG, Belo Horizonte, MG, Brazil.} \textit{E-mail address:} \texttt{luccas@ufmg.br}.

 F. Linares, \textsc{IMPA, Instituto de Matemática Pura e Aplicada, Rio de Janeiro, RJ, 22460-320, Brazil.} \textit{E-mail address:} \texttt{linares@impa.br}.
    
T. S. R. Santos, \textsc{Instituto de Matemática, Estatística e Computação Científica (IMECC), UNICAMP, Campinas, SP, 13083-859, Brazil.} \textit{E-mail address:} \texttt{thyagosr@unicamp.br}.
}}

\end{document}